\newcommand{\new}[1]{#1}
\newcommand{\newer}[1]{#1}
\newtheorem{theorem}{Theorem}[section]
\newtheorem{algorithm}{Algorithm}[section]
\newtheorem{lemma}[theorem]{Lemma}
\newtheorem{proposition}[theorem]{Proposition}
\newtheorem{corollary}[theorem]{Corollary} 
\newtheorem{definition}[theorem]{Definition}
\newtheorem{remark}[theorem]{Remark}
\definecolor{lightgrey}{rgb}{.7,.7,.7}
\definecolor{darkgreen}{rgb}{0,.5,0}
\numberwithin{equation}{section}
\newcommand{\osc}{{\ensuremath{\mathrm{osc}}}}
\newcommand{\refi}[2]{\ensuremath{{\textsf{\rm Ref}(#1; #2)}}}
\newcommand{\refine}[2]{\ensuremath{{\textsf{\rm Refine}(#1; #2)}}}
\newcommand{\refd}[2]{\ensuremath{{\textsf{\rm ref'd}(#1; #2)}}}
\newcommand{\midpoint}[1]{\ensuremath{{\textsf{\rm midpt}(#1)}}}
\newcommand{\Popt}[1][]{\Pop^{\rm opt}_{#1}}
\newcommand{\Gopt}[1][]{\mathcal{G}^{\rm opt}_{#1}}
\newcommand{\ancestor}{{\tt anc}}
\newcommand{\descendant}{{\tt desc}}
\newcommand{\children}{{\tt child}}
\newcommand{\parents}{{\tt parent}} 
\newcommand{\generation}{{\tt gen}}
\newcommand{\childof}{\rhd} 
\newcommand{\parentof}{\lhd} 
\newcommand{\ancestorof}{{<}\mspace{-8mu}{\lhd}}
\newcommand{\descendantof}{{\rhd}\mspace{-8mu}{>}}{

\newcommand{\tildeMk}[1][k]{\widetilde{\mathcal{M}}_{#1}}
\newcommand{\Mk}[1][k]{\mathcal{M}_{#1}}

\newcommand{\partnerof}{{\circ{}}\hspace{-0.20em}{\circ{}}}
\newcommand{\free}{{\tt free}}
\newcommand{\cGD}{\ensuremath{c_{\rm GD}}}

\newcommand{\bbThat}{\widehat{\bbT}}
\newcommand{\bbPhat}{\widehat{\bbP}}
\newcommand{\dx}{\ensuremath{\mathrm{d}x}}
\newcommand{\ds}{\mathrm{d}s\xspace}

\newcommand{\VT}[1][\mathcal{T}]{\ensuremath{\mathbb{V}(#1)}}
\newcommand{\VoT}[1][\mathcal{T}]{\ensuremath{\mathbb{V}_0(#1)}}

\newcommand{\tria}{\ensuremath{\mathcal{T}}}
\newcommand{\plus}{^{++}}
\newcommand{\vT}[1][\tria]{\ensuremath{v_{#1}}}
\newcommand{\uT}[1][\tria]{\ensuremath{u_{#1}}}
\newcommand{\sides}{\ensuremath{\mathcal{S}}}
\newcommand{\nodes}{\ensuremath{\mathcal{N}}}
\newcommand{\estP}[1][\Pop]{\ensuremath{\mathcal{E}_{#1}}}
\newcommand{\estT}[1][\tria]{\ensuremath{\mathcal{E}_{#1}}}
\newcommand{\estPbar}[1][\Pop]{\ensuremath{\bar{\mathcal{E}}_{#1}}}
\newcommand{\estTbar}[1][\tria]{\ensuremath{\bar{\mathcal{E}}_{#1}}}

\newcommand{\EstPbar}[1][\Pop]{\ensuremath{\bar{{E}}_{#1}}}

\newcommand{\Cleq}{\ensuremath{\lesssim}}
\newcommand{\Cgeq}{\ensuremath{\gtrsim}}

\newcommand{\supp}{\operatorname{supp}}

\newcommand{\Pop}{\mathcal{P}}

{~\ttfamily\begin{samepage}%
\begin{tabbing}%
 \hspace*{5mm}\=\hspace{3ex}\=\hspace{3ex}\=\hspace{3ex}\=\hspace{3ex}%
\=\hspace{3ex}\=\hspace{3ex}\=\hspace{3ex}\=\hspace{3ex}\kill}%
{\end{tabbing}\end{samepage}}

\begin{document}

\title{Instance optimality of the adaptive maximum strategy}

\author[L.~Diening]{Lars Diening}
\address{Lars Diening,
 Mathematisches Institut der Universit\"at M\"unchen,
 Theresienstrasse 39, D-80333 M\"unchen, Germany
 }%
\urladdr{http://www.mathematik.uni-muenchen.de/~diening}
\email{diening@math.lmu.de}

\author[C.~Kreuzer]{Christian Kreuzer}
\address{Christian Kreuzer,
 Fakult\"at f\"ur Mathematik,
 Ruhr-Universit\"at Bochum,
 Universit\"atsstrasse 150, D-44801 Bochum, Germany
 }%
\urladdr{http://www.ruhr-uni-bochum.de/ffm/Lehrstuehle/Kreuzer/index.html}
\email{christan.kreuzer@rub.de}

\author[R.P.~Stevenson]{Rob Stevenson}
\address{Rob Stevenson,
  Korteweg-de Vries Institute (KdVI) for Mathematics,
  University of Amsterdam,
  Science Park 904,
  1098 XH Amsterdam, The Netherlands
 }%
\urladdr{https://staff.fnwi.uva.nl/r.p.stevenson/}
\email{r.p.stevenson@uva.nl}

\subjclass[2010]{
41A25, 
65N12, 
65N30, 
65N50
}

\keywords{Adaptive finite element method, maximum marking, instance optimality, newest vertex bisection}


\begin{abstract}
In this paper, we prove that the standard adaptive finite element
method with a (modified) {\em maximum marking strategy} is
{\em instance optimal} for the {\em total error}, being the \new{square root} of the
\new{squared} energy error \new{plus} the
\new{squared} oscillation. This result will be derived in the model setting of
Poisson's equation on a polygon, linear 
finite elements, and conforming triangulations created by newest vertex
bisection.
\end{abstract}

\date{\small \today}

\maketitle



\section{Introduction}
\label{sec:prelim}
Adaptive algorithms for the solution of PDEs that have been proposed
since the 70's are nowadays standard tools in science and engineering.  
In contrast to uniform refinements, adaptive mesh modifications do not
guarantee that the maximal mesh size tends to zero. For
this reason, 
even convergence of adaptive finite element methods (AFEM's) was
unclear for a long time, though practical experiences often showed
optimal convergence rates. 

In one dimension, convergence of an AFEM for elliptic problems
was proved by Babu{\v{s}}ka and Vogelius in
\cite{BabuskaVogelius:84} under some heuristic assumptions.
Later, D{\"o}rfler introduced in \cite{Doerfler:96} a bulk chasing marking
strategy thereby proving linear convergence of an AFEM in two space
dimensions for a sufficiently fine initial triangulation. This
restriction was removed in \cite{MoNoSi:00,MoNoSi:02} by Morin, Nochetto, and
Siebert.

In \cite{BinDD04},  Binev, Dahmen and DeVore extended the AFEM
analysed in \cite{MoNoSi:00} by a so-called {\em coarsening}
routine, and showed that the resulting method is {\em instance
  optimal}, cf. also \cite{Binev:07}. This means that the
energy norm of the error in any approximation produced by the
algorithm, with underlying triangulation denoted as $\tria$, is less
than some constant multiple of the 
error 
w.r.t.  any \emph{admissible} triangulation $\tilde{\tria}$ satisfying $\# (\tilde{\tria}
\setminus \tria_\bot) \leq \lambda  
\# (\tria \setminus \tria_\bot)$, for some fixed constant $\lambda \in (0,1)$.
Here, an admissible triangulation is a
conforming triangulation, which is created by finitely many newest
vertex bisections  (NVB) from a
fixed initial triangulation, \new{which we denote as $\tria_\bot$}. 

In \cite{Ste07}, 
it was shown that already without the addition of coarsening, the AFEM
is {\em class optimal}: Whenever the solution can be approximated at
some asymptotic (algebraic) convergence rate $s$ by finite element
approximations, \new{and the right-hand side can be approximated by piecewise polynomials at rate $s$}, then 
the AFEM produces a sequence of approximations, which converges with
precisely this rate $s$. 
In \cite{CaKrNoSi:08},
a similar result was \new{presented} with a refinement routine that is not
required to produce ``interior nodes'', and with a different treatment
of the approximation of the right-hand side, that is assumed to be in $L_2$. In that paper, the AFEM
is considered as a procedure for reducing the {\em total error}, being \new{the square root of}
the \new{squared} error in the energy norm
\new{plus} the \new{squared} so-called oscillation. This is also the point of view
that will be taken in the present work. 

In the last few years,
in numerous works class optimality results for AFEMs have been derived
for arbitrary space dimensions, finite elements of arbitrary orders,
the error measured in $L^2$, right-hand sides in $H^{-1}$,
nonconforming triangulations, discontinuous Galerkin methods, general
diffusion tensors, (mildly) non-symmetric problems, nonlinear
diffusion equations, and indefinite problems.

In all these works 
the marking strategy is {\em bulk chasing}, also
called {\em D{\"o}rfler marking}. In 
\cite{MorinSiebertVeeser:08}, Morin, Siebert and Veeser considered 
also 
the maximum and equidistribution strategies, without
proving any rates though. 

In the present work, we consider a standard AFEM, so without
coarsening, in the model setting
of
Poisson's equations with homogeneous Dirichlet boundary conditions on a
two-dimensional polygonal domain, the error
measured in the energy norm, square integrable right-hand side, linear
finite elements,
and conforming triangulations created by NVB.
The refinement routine is not required to create
interior nodes in refined triangles. 
Our method utilizes a (modified) {\em maximum
  marking strategy} for the standard residual error
estimator organised by edges. 

The maximum strategy marks all edges for bisection whose
 indicator is greater or equal to a constant $\new{\sqrt{\mu}} \in (0,1]$ times the
largest  indicator. This strategy is usually preferred by practitioners
since, other than
with D\"{or}fler marking, it does not require the sorting of the error
indicators, and in practise 
the results turn out to be very insensible to the
choice of the marking parameter $\mu \in (0,1]$.

Roughly speaking, our modification of the maximum marking strategy
replaces the role of the  error indicator associated with an edge $S$ by
the \new{square root of the} sum of the \new{squared} error indicators over those edges that necessarily have
to be bisected together with $S$ in order to retain a conforming
triangulation. The precise AFEM is stated in Section~\ref{sec:afem}.

\new{The main result of this paper states that for any $\mu \in (0,1]$, for some constants $C,\tilde C \new{\geq 1}$ it holds that
  \begin{align*}
    \abs{u-\uT[\tria_k]}_{H^1(\Omega)}^2+
    \osc_{\tria_k}^2(\tria_k) \le \tilde C\, \big(
    \abs{u-\uT[\tria]}_{H^1(\Omega)}^2+\osc_\tria^2(\tria)
    \big),
  \end{align*}
  for all admissible $\tria$ with \new{$\#(\tria \setminus \tria_\bot)\le \frac{\#(\tria_k \setminus \tria_\bot)}{C}$}.
  Here $\uT[\tria]$ is the Galerkin approximation to the exact solution $u$ from the finite element space corresponding to $\tria$, 
  $\osc_\tria^2(\tria) := \sum_{T \in \tria} |T|
  \norm{f-f_T}_{L^2(T)}^2$,
where $f_T := \frac{1}{\abs{T}} \int_T f\,dx$,
  and 
  $\tria_k$ is the triangulation produced in the $k$th iteration of our AFEM.
  This result means that our AFEM is {\em instance optimal} for the {\em total error}.}
Clearly, instance optimality implies class optimality for any
(algebraic) rate $s$, but not vice versa.

\new{Our AFEM is driven by the usual residual based a posteriori
  estimator, that is only equivalent to the {\em total}
  error. Consequently, we do not obtain instance optimality for the
  plain energy error, so without the oscillation term. 
}
  The oscillation encodes approximability of the right hand side
  and is in most cases 
  of higher order (e.g. \new{as} when $f \in H^s$ \new{for some $s>0$}), and thus asymptotically
  neglectable. 
 
 To prove instance optimality, we will show that the {\em total energy} associated with any
triangulation $\tria$ produced by our AFEM is not
larger than the total energy associated with any conforming
triangulation $\tilde{\tria}$ created by NVB with $\# (\tilde{\tria}
\setminus \tria_\bot) \leq \lambda \# (\tria \setminus \tria_\bot)$,
for some fixed constant $\lambda\in(0,1)$. 
Here the total energy is defined as  the Dirichlet energy \new{plus} the ``\new{squared} element residual part of 
the a posteriori estimator''.

The outline of this paper is as follows:
Sect.~\ref{sec:newest-bisection} is devoted to the newest vertex
bisection refinement procedure.  
On the set of vertices of the triangulations that can be created by
NVB from $\tria_\bot$, we introduce 
a tree structure where nodes generally have multiple parents.
Because of the resemblance of this tree structure with that
of a family tree, we refer to such a 
tree as a population. The concept of population is the key for the derivation of
some interesting new properties of  NVB. 

In Sect.~\ref{sec:projection}, we show that the squared norm of the
difference of  Galerkin solutions on nested triangulations
is equivalent to the sum of squared norms of the
differences of the Galerkin solution on the fine triangulation and that on some intermediate
triangulations. We call this the \emph{lower diamond estimate}.

Sect.~\ref{sec:error-bounds} is devoted to a posteriori error
bounds. It is shown that the difference of total energies associated
with  a triangulation $\tria_*$ and a coarser triangulation $\tria$ is
equivalent to the sum of the \new{squared} error indicators over exactly those edges
in $\tria$ that are refined in $\tria_*$. 

Based on the presented refinement framework and error estimator, 
we precisely specify our AFEM in Section~\ref{sec:afem}.

In Sect.~\ref{sec:fine}, we investigate some fine properties of
populations, and thus of conforming triangulations created by NVB.  
Calling the vertices in such a triangulation ``free'' when they can be
removed while retaining a conforming triangulation, the most striking
property says that  
the number of free nodes cannot be reduced by more than a constant
factor in any further conforming NVB refinement. 

Finally, in Sect.~\ref{sec:simpl-optim} we combine these tools to prove
instance optimality of our AFEM.\\ 

Throughout this paper we use the notation $a\Cleq b$ to indicate
$a\leq C\,b$, with a generic constant $C$ only \new{possibly} depending on fixed
quantities like the initial triangulation $\tria_\bot$, which will be
introduced in the next subsection.  Obviously, $a\Cgeq b$ means
$b\Cleq a$, and we denote $a \Cleq b\Cleq a$ by $a\eqsim
b$.

\section{Newest vertex bisection}
\label{sec:newest-bisection}
We recall properties of the newest vertex bisection (NVB) algorithm
for creating locally refined triangulations. Moreover, we introduce new concepts
related to conforming NVB that allow us to derive some new
interesting properties.
\subsection{Triangulations and binary trees}
\label{sec:refinement}

We denote by $\tria_\bot$ a conforming initial or ``bottom''
triangulation of a polygonal domain $\Omega \subset \mathbb{R}^2$.
We restrict ourselves to mesh adaptation 
by newest vertex bisection in $2d$; compare with
\cite{Baensch:91,Kossaczky:94,Maubach:95,Traxler:97,BinDD04,Ste08} as
well as \cite{NoSiVe:09,SchmidtSiebert:05} and the references therein.

To be more precise, for each $T \in \tria_\bot$, we label one of its
vertices as its {\em newest vertex}. Newest vertex bisection splits 
$T$ into two sub-triangles by
connecting the newest vertex to the midpoint of the opposite edge of
$T$, called the {\em refinement edge} of $T$. This midpoint is
labelled as the newest vertex of both newly created triangles, called
{\em children} of $T$.  A recursive application of this
rule uniquely determines all possible
NVB refinements of $\tria_\bot$. 

The triangles of any triangulation of $\Omega$ that can be
created in this way are the leaves of a subtree of an infinite binary
tree $\mathfrak{T}$ of triangles, having as roots the triangles of
$\tria_\bot$.  The newest vertex of any $T \in \mathfrak{T}$ is
determined by the labelling of newest vertices in $\tria_\bot$.  We
define the generation $\generation(T)$ of $T \in \mathfrak{T}$ as the
number of bisections that  are needed to create $T$ starting from
$\tria_\bot$. In particular, $\generation(T)=0$ for $T \in
\tria_\bot$. We have {\em uniform shape regularity} of $\mathfrak{T}$ 
in the sense that
$$
\sup_{T \in \mathfrak{T}} {\rm
  diam}(T)/|T|^{\frac{1}{2}}<\infty,
$$
only dependent on $\tria_\bot$. 
\new{We denote by $\nodes(T)$ the set of {\em nodes} or vertices of $T \in \mathfrak{T}$.}

Among all
triangulations that can be created by newest vertex bisection from
$\tria_\bot$, we \new{are interested in}
 those that are {\em conforming}
and denote the set of these triangulations as $\bbT$. Note that
$\tria_\bot \in \bbT$ by assumption.

In the following we shall {\em always} assume that in $\tria_\bot$ the
labelling of the newest vertices is such that
$\new{\tria=\tria_\bot}$ satisfies the {\em matching \new{condition}}:
\begin{gather}
  \label{ass:matching}
  \text{\parbox{10cm}{\em If, for $T,T' \in \tria$, $T \cap T'$
      is
      the refinement edge of $T$, \\[1mm]
      then it is the refinement edge of $T'$.}}
\end{gather} 
It is shown in \cite{BinDD04}, that 
such a labelling can be found for any conforming $\tria_\bot$.

\new{By induction one shows that for any $k \in \mathbb{N}_0$, the uniform refinement of the initial triangulation $\{T\in\mathfrak{T}\colon \generation(T)=k\}$ is in $\bbT$, and satisfies the matching condition. Moreover,}
the following result is valid:
\begin{proposition}[{\cite[Corollary 4.6]{Ste08}}]
  \label{pro:matching}
  Let $\tria \in \bbT$ and $T,T' \in \tria$ be such that $S=T \cap T'$
  is the refinement edge of $T$. Then,
  \begin{itemize}
  \item either
    $\generation(T')=\generation(T)$ and 
    $S$ is the refinement edge of $T'$, or
  \item
    $\generation(T')=\generation(T)-1$
    and $S$ is the refinement edge of one of the two children of $T'$.
  \end{itemize}
\end{proposition}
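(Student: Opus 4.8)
The plan is to reduce everything to one elementary observation about a single bisection and then run an induction. The observation: if $T\in\mathfrak{T}$ has newest vertex $A$, so that its refinement edge (written $R(T)$) is the edge $BC$ opposite $A$, and $M$ is the midpoint of $BC$, then the two children of $T$ are the triangles $ABM$ and $ACM$; both have generation $\generation(T)+1$ and newest vertex $M$; and $R(ABM)=AB$, $R(ACM)=AC$. Equivalently, \emph{the refinement edges of the two children of $T$ are exactly the two non-refinement edges of $T$}, whereas the bisector $AM$ is a non-refinement edge of both children. This is precisely the transition that carries the first alternative of the proposition into the second one generation later, and it will drive the whole argument.

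I would prove the proposition by induction on the number of bisections needed to create $\tria$ from $\tria_\bot$, \emph{simultaneously} with the companion invariant: any two triangles of $\tria$ sharing an edge $S$ that is the refinement edge of neither of them have equal generation. The base case $\tria=\tria_\bot$ is immediate, since all generations vanish, the companion invariant is vacuous, and the proposition is then exactly the matching condition~\eqref{ass:matching}. For the inductive step I would use the standard fact about NVB (the conforming-closure/``recursive refinement'' mechanism from the references cited above) that any $\tria\in\bbT$ is obtained from a coarser $\tria_-\in\bbT$ by one atomic refinement: either one simultaneously bisects an interior pair $T,T'\in\tria_-$ with $\generation(T)=\generation(T')$ and $T\cap T'=R(T)=R(T')$, or one bisects a single triangle whose refinement edge lies on $\partial\Omega$. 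Only the bisected triangles change; they are replaced by their children, introducing a single new vertex $M$ (the midpoint of the bisected edge).

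The step is then a bounded case check over the edges of $\tria$ incident to one of the new children, using the opening observation. The new bisectors are common refinement edges of equal-generation sibling pairs, which is the first alternative (and the companion invariant does not apply to them). The two halves of the bisected edge lie inside the patch of new children, are the refinement edge of neither incident triangle, and those incident triangles share the new generation — so the companion invariant holds there and the proposition is vacuous. Finally, each edge that was a non-refinement edge of a parent and is now the refinement edge of one of its children is handled by invoking the inductive hypothesis — \emph{both} the proposition and the companion invariant — for the unchanged triangle on the far side of that edge in $\tria_-$: this pins its generation to be either the new generation, giving the first alternative in $\tria$, or the parent's generation, in which case that edge is a non-refinement edge of the far triangle and hence, by the opening observation once more, the refinement edge of one of its children, giving the second alternative. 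The boundary-edge case is the same argument with only two children and fewer configurations.

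The main obstacle is organizational rather than conceptual: one must be scrupulous in the inductive step about which of the two triangles incident to a given edge plays the role of ``$T$'' and which of ``$T'$'' in the hypothesis, and invoke the correct half of the inductive hypothesis in each of the finitely many configurations; and one must recognize that the proposition alone does not close the induction — the companion invariant has to be transported along, and it can be, precisely because a triangle's children inherit its non-refinement edges as their refinement edges.
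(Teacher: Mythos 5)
The paper offers no proof of Proposition~\ref{pro:matching}: it imports the statement from \cite[Corollary~4.6]{Ste08}. Your argument is therefore not competing with an internal proof, and as far as I can check it, it is a correct self-contained substitute. The two load-bearing ideas are right: the observation that the refinement edges of the two children of $T$ are precisely the two non-refinement edges of $T$ (while the bisector and the two halves of the bisected edge are non-refinement edges of the children), and the recognition that the proposition alone does not propagate --- one must carry the companion invariant that neighbours across an edge which is the refinement edge of \emph{neither} have equal generation. Together the two statements give the full trichotomy for an interior edge of $\tria$ (refinement edge of both, equal generations; of exactly one, generations differing by one in the right direction; of neither, equal generations), and it is this trichotomy, not the proposition by itself, that is stable under one compatible bisection. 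Your case check in the inductive step is the correct bookkeeping: the only nontrivial edges are the outer edges of the bisected pair, and for each such edge the inductive hypothesis applied to the unchanged far triangle (the proposition if that edge was its refinement edge, forcing its generation to be $g+1$ and giving the first alternative; the companion invariant otherwise, forcing generation $g$ and giving the second alternative via your opening observation) settles the matter.

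The one ingredient you invoke without establishing is the atomic decomposition: that every $\tria\in\bbT$ other than $\tria_\bot$ arises from some $\tria_-\in\bbT$ by bisecting one compatible interior pair (equal generations, shared common refinement edge) or one boundary triangle. This is exactly the assertion the paper itself makes, also without proof, just before Proposition~\ref{pro:Pop_in_bbP}, so leaning on it is consistent with the paper's conventions; but it is not free. Peeling a vertex $M$ of maximal generation off $\Pop(\tria)$ does yield a population, yet one must still argue that all four (or two) triangles of generation $\generation(M)$ having $M$ as newest vertex lie in $\tria$ and reassemble into a pair sharing their refinement edge. If you want to avoid this, there is a shorter non-inductive route, closer in spirit to \cite{Ste08}: using that every uniform refinement of $\tria_\bot$ satisfies the matching condition \eqref{ass:matching}, assign to each edge $S$ the common generation of all $T\in\mathfrak{T}$ having $S$ as refinement edge; your opening observation then says a triangle of generation $g$ has one edge of generation $g$ and two of generation $g+1$, and both the proposition and your companion invariant drop out by comparing the two values attached to a shared edge. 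Your induction buys self-containedness at the price of the atomic-decomposition lemma; the edge-generation route buys a two-line conclusion at the price of proving well-definedness of edge generations.
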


We denote by $\sides(\tria)$ ($\sides_0(\tria)$) the set of
  (interior) {\em sides} or edges, and by ${\nodes(\tria)}$ 
  (${\nodes_0(\tria)}$) the set of (interior) {\em nodes} or vertices
  of a triangulation $\tria \in \bbT$.

Finally, we note that if, for $\tria\in \bbT$, we
replace each $T \in \tria$ by its grandchildren, i.e., the children of
its children, then we obtain a conforming triangulation, that will be
denoted as $\tria\plus$; compare with Figure~\ref{fig:tria++}.

\begin{figure}[h]
  \centering
  \resizebox{0.43\textwidth}{!}{\input{tria.pspdftex}}
  \hspace{1cm}
  \resizebox{0.43\textwidth}{!}{\input{tria2.pspdftex}}
  \caption{$\tria$ with dashed refinement edges and the resulting
    $\tria\plus$.}\label{fig:tria++} 
\end{figure}

\subsection{Populations} 
\label{sec:populations}

\new{A} triangulation $\tria \in \bbT$ can alternatively be described in
terms of {\em populations}, which we shall introduce now.  To this
end, we denote the elements of 
$$
\new{\Pop^\top:=\bigcup_{T \in \mathfrak{T}} \nodes(T)},
$$
i.e., \new{the union of the vertices of all $T \in \mathfrak{T}$}, as {\em
  persons}.

We call a collection of persons a {\em population} when it is
equal to $\nodes(\tria)$ for some $\tria \in \bbT$, and denote with
$\bbP$ the collection of all populations. Since a 
triangulation $\tria \in \new{\bbT}$ is uniquely defined by its nodes
$\nodes(\tria)$, we have a one-to-one correspondence
between populations $\Pop \in \bbP$ and triangulations $\tria \in
\bbT$. 
When $\Pop = \nodes(\tria)$, we write $\Pop=\Pop(\tria)$ respectively 
$\tria=\tria(\Pop)$ and set 
$$
\Pop_\bot:=\Pop(\tria_\bot)
$$
for the initial or bottom population.

\new{The set $\Pop^\top$ can be equipped with a family tree structure.}
Let $P\in\Pop^\top\setminus\Pop_\bot$, then there exists a 
$T\in\mathfrak{T}$ such that $P$ is the midpoint of the refinement edge of
$T$. We call the newest vertex of $T$ a parent of $P$, respectively $P$
its child. If $P\in\partial\Omega$ then $P$ has one parent. Otherwise,
when $P\in\Omega$, it has two parents.  
The \emph{generation} of $P$ is defined by
$\generation(P)=\generation(T)+1$. 
\new{Since any uniform refinement of the initial partition is conforming and satisfies the matching condition, this definition is unique.}
Indeed,  
if $P$ is the midpoint of a
refinement edge of another
element in $\tilde T\in\mathfrak{T}$, then
$\generation(T)=\generation(\tilde T)$. 
Defining $\generation(P)=0$ when $P
\in \Pop_\bot$, we infer that the generation of a child is one plus
the generation of its parent(s), which in particular are of equal
generation.

\new{Since an equivalent definition of $\generation(P)$ is given by $\min\{\generation(T):T\in\mathfrak{T},\,P \in \nodes(T)\}$, no two vertices of a $T \in \mathfrak{T}$ can have the same generation, unless they have generation zero.}

Thanks to the uniform shape regularity of $\mathfrak{T}$, the number of children
a single person can have is uniformly bounded.  It is easy to see,
that a person $P\in\Pop^\top\setminus \Pop_\bot$ 
\new{has either two} \new{(when it is on the boundary)} \new{or four}
children \new{in $\Pop^\top$}; cf. Figure~\ref{parent-child}.
\begin{figure}
\input{par-ch.pspdftex}
\caption{Parents--children relations, and $\Omega(P_6)$ and $\Omega(P_7)$.}
\label{parent-child}
\end{figure}
For $P\in \Pop^\top$, we denote by $\children(P)$  the
set of the children of~$P$, and by $\parents(P)$ the set of its parents.

\new{
Any $\tria \in \bbT$ is obtained from $\tria_\bot$ by a sequence of
simultaneous bisections of pairs of triangles that share their
refinement edge (or by individual bisections of triangles that have
their refinement edge on the boundary). Each of such (simultaneous)
bisections corresponds to the addition of a person to the population
whose both its parents (or its single parent when the person is on the
boundary) are already in the population. 
We conclude the following result. 
}

\begin{proposition} 
  \label{pro:Pop_in_bbP}
  A collection $\mathcal{U} \subset \Pop^\top$ is a population if and
  only if $\Pop_\bot\subset \mathcal{U}$ and, for each $P \in
  \mathcal{U}$, we have that {\em all} parents of $P$
    are contained in $\mathcal{U}$. 
\end{proposition}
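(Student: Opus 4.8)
The plan is to prove both directions directly from the description of how triangulations in $\bbT$ are built by successive (pairwise or single) bisections, which was summarized just before the statement. For the ``only if'' direction, suppose $\mathcal{U} = \nodes(\tria)$ for some $\tria \in \bbT$. Since $\tria_\bot \in \bbT$ and $\tria$ is obtained from $\tria_\bot$ by a finite sequence of conforming NVB refinements, we have $\tria_\bot \preceq \tria$ (in the refinement order), hence every vertex of $\tria_\bot$ is still a vertex of $\tria$, giving $\Pop_\bot \subset \mathcal{U}$. Now take any $P \in \mathcal{U}$. If $P \in \Pop_\bot$ it has no parents and there is nothing to check, so assume $\generation(P) \geq 1$. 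Then $P$ was created, at some step of the refinement sequence producing $\tria$, as the midpoint of the refinement edge of some $T \in \mathfrak{T}$ (possibly simultaneously with the bisection of the matching triangle $T'$ sharing that edge). The parents of $P$ are by definition the newest vertices of $T$ (and of $T'$, if present) — but these coincide, and in any case a parent $Q$ of $P$ is a vertex of an ancestor triangle of the triangles in $\tria$ incident to $P$, and such a vertex is never removed by further bisections once $P$ has been created (bisection only adds midpoints; the old vertices of a bisected triangle remain vertices of its children). Hence every parent of $P$ lies in $\nodes(\tria) = \mathcal{U}$.

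For the ``if'' direction, suppose $\mathcal{U} \subset \Pop^\top$ satisfies $\Pop_\bot \subset \mathcal{U}$ and is \emph{parent-closed}: every $P \in \mathcal{U}$ has all its parents in $\mathcal{U}$. We must exhibit $\tria \in \bbT$ with $\nodes(\tria) = \mathcal{U}$. The idea is to build $\tria$ by adding the persons of $\mathcal{U} \setminus \Pop_\bot$ in order of nondecreasing generation, performing at each step the bisection (or pair of bisections) that creates that person. Concretely, enumerate $\mathcal{U} \setminus \Pop_\bot$ as $P_1, P_2, \dots, P_m$ (assume $\mathcal{U}$ finite for the moment; the general statement follows since any population is finite, or one argues locally) so that $\generation(P_1) \leq \dots \leq \generation(P_m)$. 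Starting from $\tria^{(0)} := \tria_\bot$, having constructed a conforming $\tria^{(j-1)} \in \bbT$ with $\nodes(\tria^{(j-1)}) = \Pop_\bot \cup \{P_1,\dots,P_{j-1}\}$, we claim $P_j$ is the midpoint of the refinement edge of some $T \in \tria^{(j-1)}$. This is where parent-closedness is used: the parent(s) of $P_j$ have generation $\generation(P_j)-1 < \generation(P_j)$, so by the ordering they already lie in $\{P_1,\dots,P_{j-1}\} \cup \Pop_\bot = \nodes(\tria^{(j-1)})$; and the triangle $T \in \mathfrak{T}$ of which $P_j$ is the refinement-edge midpoint has its newest vertex among these parents and — since all of $T$'s vertices have generation $\le \generation(T) < \generation(P_j)$ and none of $P_1,\dots,P_{j-1}$ subdivides $T$'s refinement edge or lies strictly inside $T$ (they would have to, to not already force $P_j$ in) — $T$ itself is still an element of $\tria^{(j-1)}$. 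One then bisects $T$, and, to keep conformity, also its matching neighbor $T'$ across the refinement edge (which by Proposition~\ref{pro:matching} shares this edge as its own refinement edge, using the matching condition on $\tria_\bot$ and the fact that uniform refinements satisfy it); note $T'$'s newest vertex is the same person, already present. The result $\tria^{(j)}$ is again in $\bbT$ with the correct node set. After $m$ steps, $\tria := \tria^{(m)} \in \bbT$ has $\nodes(\tria) = \mathcal{U}$.

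The main obstacle is the claim, in the inductive step, that the triangle $T$ whose refinement-edge midpoint is $P_j$ is actually present in $\tria^{(j-1)}$ — i.e., that no earlier bisection has already refined $T$. Here one must rule out two scenarios: that some $P_i$ ($i<j$) is itself the midpoint of $T$'s refinement edge (impossible, since that would make $P_i = P_j$, as the refinement edge and its midpoint are determined by $T$), and that some $P_i$ with $i<j$ lies strictly in the interior of $T$ (this would require $\generation(P_i) > \generation(T) = \generation(P_j)-1$, i.e. $\generation(P_i) \ge \generation(P_j)$, and if equal one uses the remark, quoted just above the statement, that two vertices of a single $T \in \mathfrak{T}$ of positive generation cannot share a generation — so an interior descendant vertex strictly postdates $P_j$ in the enumeration). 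A clean way to package this is to prove by induction the invariant that $\tria^{(j)}$ consists exactly of those $T' \in \mathfrak{T}$ all of whose vertices lie in $\nodes(\tria^{(j)})$ and which are minimal with this property (equivalently, maximal non-refined), which makes the elementhood of $T$ transparent; conformity is then maintained automatically by always bisecting matching pairs. Apart from this bookkeeping, everything is routine given Proposition~\ref{pro:matching} and the uniqueness of the generation function established earlier.
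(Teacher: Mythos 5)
The paper itself gives no formal proof of this proposition; it relies on the one-paragraph heuristic preceding the statement, which really only addresses the ``only if'' direction. Your proposal is the natural formalization of that heuristic, and the overall plan (build $\tria$ by adding persons of $\mathcal{U}\setminus\Pop_\bot$ in nondecreasing generation, bisecting the matching pair of triangles at each step) is sound.

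There is, however, a genuine gap in the inductive step of the ``if'' direction. To conclude $T\in\tria^{(j-1)}$ you need all three vertices of $T$ to lie in $\nodes(\tria^{(j-1)})$, but you only establish this for the newest vertex $Q$ of $T$ (which is a parent of $P_j$, hence in $\mathcal{U}$). The other two vertices of $T$ are not parents of $P_j$ and you never argue they lie in $\mathcal{U}$; observing that they have generation $\le\generation(T)<\generation(P_j)$ is not enough, since low generation does not place a person in $\mathcal{U}$. What is needed is the NVB-specific fact that $\nodes(T)\subset\{Q\}\cup\ancestor(Q)\cup\Pop_\bot$ for every $T\in\mathfrak{T}$ with newest vertex $Q$ (provable by induction on $\generation(T)$, since one of the two non-newest vertices of $T$ is the newest vertex of the parent triangle, hence a parent of $Q$, and the other is inherited from further up). Combined with the fact that parent-closedness of $\mathcal{U}$ implies closedness under $\ancestor$, and with $\Pop_\bot\subset\mathcal{U}$, this gives $\nodes(T)\subset\mathcal{U}$ and then $\nodes(T)\subset\nodes(\tria^{(j-1)})$ by the generation ordering. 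You then still need the characterization that, for $\tria\in\bbT$, $T\in\tria$ iff $\nodes(T)\subset\nodes(\tria)$ and the midpoint of $T$'s refinement edge is not in $\nodes(\tria)$; this is exactly the ``minimal/maximal non-refined'' invariant you gesture at in the final paragraph, but it is not proved. Similarly, that the neighbour $T'$ of $T$ across the refinement edge is a matching partner (rather than a triangle of one generation lower, case (ii) of Proposition~\ref{pro:matching}) follows because in case (ii) the midpoint of $T'$'s refinement edge would be a parent of $P_j$ and hence in $\nodes(\tria^{(j-1)})$, contradicting conformity of $\tria^{(j-1)}$ with $T'$ unbisected; this deserves to be spelled out since it is where parent-closedness is used a second time. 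None of these gaps are fatal --- they are fillable with the ingredients you already have in play --- but as written the inductive claim $T\in\tria^{(j-1)}$ is asserted rather than established.
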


\new{This intrinsic characterization of a population as a family tree will be the key to derive many interesting properties of populations, and so of triangulations in $\bbT$.}

As we have seen above, a person $P \in \Pop^\top \setminus \Pop_\bot$ is the
(newest) vertex of four, or, when $P \in \partial\Omega$, two
triangles from $\mathfrak{T}$, each of them having the same generation as $P$.
For $P\in\Pop^\top$, we set
\begin{align*}
  \Omega(P):=\operatorname{interior}\bigcup\big\{T\in \mathfrak{T}\colon
  P \in T~\text{and}~\generation(T)=\generation(P)\big\},
\end{align*}
\new{cf. Figure~\ref{parent-child}.} This definition extends to subsets $\mathcal{U} \subset \Pop^\top$  setting
\begin{align}\label{eq:OmegaPop}
  \Omega(\mathcal{U}) &:= \operatorname{interior} \bigcup_{P \in
    \mathcal{U}} \overline{\Omega(P)}.
\end{align}

One easily verifies the following result:
\begin{proposition} \label{prop:areas}\mbox{}
  \begin{enumerate}
  \item \label{areas_item1} Let $P_1, P_2 \in \Pop^\top\setminus
    \Pop_\bot$ with $P_1 \neq P_2$ and
    $\generation(P_1)=\generation(P_2)$.  Then $\Omega(P_1) \cap
    \Omega(P_2)=\emptyset$.  \item \label{areas_item3} Let $P \in \Pop^\top \setminus \Pop_\bot$.
    Then $\Omega(P) \subset \Omega(\parents(P))$.
 \end{enumerate}
\end{proposition}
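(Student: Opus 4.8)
The plan is to unwind the definitions and reduce both claims to elementary facts about the generation structure of $\mathfrak{T}$ that are already available. For part~\eqref{areas_item1}, suppose for contradiction that $\Omega(P_1) \cap \Omega(P_2) \neq \emptyset$ for two distinct persons $P_1,P_2$ of the same generation $g \geq 1$. By the definition of $\Omega(P_i)$ as the interior of a union of closed triangles $T \in \mathfrak{T}$ with $P_i \in T$ and $\generation(T)=g$, a nonempty intersection of the two open sets forces two triangles $T_1 \ni P_1$ and $T_2 \ni P_2$, both of generation $g$, whose interiors overlap. Since $\mathfrak{T}$ is an infinite binary tree of triangles created by NVB, the triangles of a fixed generation $g$ — being obtained by exactly $g$ bisections from $\tria_\bot$ — have pairwise disjoint interiors; more precisely, any two triangles in $\mathfrak{T}$ are either nested (one contained in the other) or have disjoint interiors, and nesting between two triangles of \emph{equal} generation forces equality. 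Hence $T_1 = T_2 =: T$, so $P_1, P_2 \in \nodes(T)$ with $\generation(P_1)=\generation(P_2)=g$. But the excerpt records that no two vertices of a single $T \in \mathfrak{T}$ can share the same generation unless that generation is zero; since $g\ge 1$, this yields $P_1 = P_2$, a contradiction.

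For part~\eqref{areas_item3}, let $P \in \Pop^\top \setminus \Pop_\bot$ and pick any $T \in \mathfrak{T}$ with $P \in T$ and $\generation(T)=\generation(P)=:g$; by definition $\Omega(P)$ is the interior of the union of all such $T$, so it suffices to show each such closed triangle $\overline{T}$ is contained in $\overline{\Omega(\parents(P))}$. Recall that $P$ is the newest vertex of each such $T$, i.e.\ $P$ is the midpoint of the refinement edge of the (unique) parent triangle $\widehat T \in \mathfrak{T}$ of $T$ in the binary tree, and $\generation(\widehat T) = g-1$. The two endpoints of that refinement edge of $\widehat T$ are precisely the parents of $P$ (or the single parent, when $P \in \partial\Omega$), by the very definition of the parent relation on persons; and these endpoints are vertices of $\widehat T$ of generation $g-1$ (their generation cannot exceed $g-1$ since they already lie in $\nodes(\widehat T)$, and they are not $P$, so by the ``distinct vertices have distinct generations'' fact applied to $\widehat T$ they have generation $\le g-1$, while they are not of smaller generation because $\widehat T$ itself is of generation $g-1$ — more simply, the refinement edge of $\widehat T$ has its two endpoints of the same generation as $\widehat T$). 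Consequently $\widehat T$ is one of the triangles entering the definition of $\Omega(Q)$ for each parent $Q$ of $P$, so $\overline{\widehat T} \subset \overline{\Omega(\parents(P))}$. Since $T \subset \widehat T$ (a child is contained in its parent triangle), we get $\overline{T} \subset \overline{\widehat T} \subset \overline{\Omega(\parents(P))}$, and taking the interior of the union over all admissible $T$ gives $\Omega(P) \subset \Omega(\parents(P))$.

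The only genuinely delicate point is the ``nested-or-disjoint'' dichotomy for triangles of $\mathfrak{T}$ together with the statement that equal generation plus nonempty interior overlap forces equality; both are standard structural facts about the binary refinement tree (any $T\in\mathfrak{T}$ and any $T'\in\mathfrak{T}$ with $\generation(T')\le\generation(T)$ satisfy either $T\subset T'$ or $\operatorname{interior}(T)\cap\operatorname{interior}(T')=\emptyset$, proved by induction on $\generation(T)-\generation(T')$ using that the two children of any triangle partition it up to a set of measure zero). I would state this as an auxiliary observation and then the two items follow by the bookkeeping above. Everything else is a direct appeal to the definitions of $\Omega(P)$, $\Omega(\mathcal{U})$, and the parent relation on persons, plus the already-noted uniqueness of generations of the vertices of a fixed triangle.
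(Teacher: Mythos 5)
The paper offers no proof of this proposition (it is introduced with ``One easily verifies the following result''), so I can only judge your argument on its own terms. Your part~(a) is correct: reducing a nonempty intersection to two generation-$g$ triangles of $\mathfrak{T}$ with overlapping interiors, invoking the nested-or-disjoint dichotomy to force them to coincide, and then using the paper's observation that distinct vertices of a single $T\in\mathfrak{T}$ have distinct generations unless all are zero, is exactly the intended easy verification.

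Part~(b), however, contains a genuine error: you have misidentified the parent relation. By the paper's definition, if $P$ is the midpoint of the refinement edge of $\widehat T\in\mathfrak{T}$, then the parent of $P$ contributed by $\widehat T$ is the \emph{newest vertex} of $\widehat T$, i.e.\ the vertex \emph{opposite} the refinement edge (the one joined to $P$ by the bisection) --- not the two endpoints of that edge, as you assert. This is also forced by the count: a boundary person has exactly one parent, whereas the refinement edge always has two endpoints. Your auxiliary claim that ``the refinement edge of $\widehat T$ has its two endpoints of the same generation as $\widehat T$'' is likewise false whenever $\generation(\widehat T)\ge 1$: by the uniqueness of generations among the vertices of a triangle, the newest vertex is the unique vertex of $\widehat T$ of generation $\generation(\widehat T)$, so the endpoints of the refinement edge have strictly smaller generation. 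As written, you therefore establish a containment into the patches of the wrong persons. The strategy is salvageable and in fact becomes cleaner with the correct identification: for each $T$ of generation $g$ having $P$ as its newest vertex, the parent triangle $\widehat T$ has generation $g-1$, contains $T$, and its newest vertex $Q$ is a parent of $P$ with $\generation(Q)=g-1=\generation(\widehat T)$; hence $T\subset\widehat T\subset\overline{\Omega(Q)}\subset\bigcup_{Q'\in\parents(P)}\overline{\Omega(Q')}$, and taking interiors of the union over the two or four such $T$ yields $\Omega(P)\subset\Omega(\parents(P))$.
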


\subsection{Refinements and coarsenings}
\label{ssec:refine}

For $\tria,\tria_* \in \bbT$ we write $\tria \leq \tria_*$ or $\tria_*
\geq \tria$, when  $\tria_*$ is a {\em refinement} of $\tria$ or,
equivalently, $\tria$ is a {\em coarsening} of $\tria_*$, i.e., when the
tree of $\tria$ is a subtree of that of $\tria_*$.  This defines a
partial ordering on~$\bbT$.  On $\bbP$, we define a {\em partial
  ordering} by $\Pop \le \Pop_*$ when $\Pop \subset \Pop_*$. We call
$\Pop_*$ a {\em refinement} of~$\Pop$ or, equivalently, $\Pop$ a {\em
  coarsening} of~$\Pop_*$. These orderings are equivalent:
\begin{proposition}\label{P:ordering}
For $\Pop,\Pop_* \in \bbP$, we have
\begin{align*}
  \Pop\le\Pop_* \quad\Longleftrightarrow
  \quad\tria(\Pop)\le\tria(\Pop_*).
\end{align*}
\end{proposition}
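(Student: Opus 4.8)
The statement to prove is the equivalence
\[
\Pop\le\Pop_* \iff \tria(\Pop)\le\tria(\Pop_*)
\]
for $\Pop,\Pop_*\in\bbP$, i.e.\ that the inclusion ordering on populations matches the refinement ordering on triangulations. The natural strategy is to prove the two implications separately, exploiting the one-to-one correspondence between populations and triangulations together with the intrinsic characterization of populations in Proposition~\ref{pro:Pop_in_bbP}.

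For the direction ``$\tria(\Pop)\le\tria(\Pop_*)\Rightarrow\Pop\le\Pop_*$'', I would argue as follows. If $\tria(\Pop)\le\tria(\Pop_*)$, then $\tria(\Pop_*)$ is obtained from $\tria(\Pop)$ by a sequence of NVB bisections. As recalled in the paragraph preceding Proposition~\ref{pro:Pop_in_bbP}, each such (simultaneous) bisection step corresponds exactly to adjoining one new person to the current population — a person whose parent(s) are already present. Hence the vertex set can only grow along this sequence, giving $\nodes(\tria(\Pop))\subset\nodes(\tria(\Pop_*))$, that is $\Pop\subset\Pop_*$, i.e.\ $\Pop\le\Pop_*$. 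Alternatively, and even more directly, every $T\in\tria(\Pop)$ is a (not necessarily proper) ancestor in $\mathfrak T$ of some triangles in $\tria(\Pop_*)$, and each vertex of $T$ is a vertex of each of those descendants, since NVB only adds the midpoint of the refinement edge; thus $\nodes(\tria(\Pop))\subset\bigcup_{T'\in\tria(\Pop_*)}\nodes(T')=\nodes(\tria(\Pop_*))$.

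For the converse ``$\Pop\le\Pop_*\Rightarrow\tria(\Pop)\le\tria(\Pop_*)$'', the cleanest route is induction on $\#(\Pop_*\setminus\Pop)$. If this number is zero the claim is trivial by uniqueness of the correspondence. Otherwise, pick in $\Pop_*\setminus\Pop$ a person $P$ of maximal generation; then $P$ is ``removable'' from $\Pop_*$ in the sense that $\Pop_*':=\Pop_*\setminus\{P\}$ still satisfies the characterization of Proposition~\ref{pro:Pop_in_bbP} (all parents of any person in $\Pop_*'$ are in $\Pop_*'$, because a parent of $P$ has strictly smaller generation, hence cannot be the removed person, and removing $P$ cannot orphan anyone of lower-or-equal generation). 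Maximality of the generation of $P$ ensures $P$ has no children in $\Pop_*$, which is what makes $\tria(\Pop_*')\le\tria(\Pop_*)$: passing from $\tria(\Pop_*')$ to $\tria(\Pop_*)$ is exactly the (simultaneous) bisection creating the single new vertex $P$. Since $\Pop\le\Pop_*'$ and $\#(\Pop_*'\setminus\Pop)=\#(\Pop_*\setminus\Pop)-1$, the induction hypothesis gives $\tria(\Pop)\le\tria(\Pop_*')\le\tria(\Pop_*)$, completing the proof.

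I expect the main obstacle to lie in the converse direction, specifically in establishing rigorously that a maximal-generation person in $\Pop_*\setminus\Pop$ can be removed so that the remaining set is still a population — i.e.\ that $\Pop_*'$ has no ``childless gaps'' — and, dually, that this single removal corresponds on the triangulation side to undoing exactly one NVB (simultaneous) bisection rather than something more complicated. Both facts hinge on the generation bookkeeping: parents have generation one less than their children (as established just before Proposition~\ref{pro:Pop_in_bbP}), and no two vertices of a common $T\in\mathfrak T$ share a positive generation, so a person of maximal generation in $\Pop_*\setminus\Pop$ is simultaneously absent from $\Pop$ and a leaf in the family-tree restricted to $\Pop_*$. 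Once this is set up, everything else is routine use of Proposition~\ref{pro:Pop_in_bbP} and the one-to-one correspondence.
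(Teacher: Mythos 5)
Your proof is correct; note that the paper itself states Proposition~\ref{P:ordering} without proof, treating it as evident from the bisection--person correspondence described just before Proposition~\ref{pro:Pop_in_bbP}, so what you have written is a legitimate filling-in rather than a rival argument. Your forward direction (the ``more direct'' variant, tracking vertices of $T\in\tria(\Pop)$ into its descendants in $\tria(\Pop_*)$) is clean and avoids having to decompose the refinement $\tria(\Pop)\le\tria(\Pop_*)$ into elementary compatible bisections. In the converse direction, two spots deserve one more line each. First, the assertion that a maximal-generation $P\in\Pop_*\setminus\Pop$ has no children in $\Pop_*$ does not follow from maximality alone: a child $Q$ of $P$ has $\generation(Q)=\generation(P)+1$, so maximality only excludes $Q\in\Pop_*\setminus\Pop$; to exclude $Q\in\Pop$ you must invoke that $\Pop$ is itself a population, whence $Q\in\Pop$ would force $P\in\parents(Q)\subset\Pop$, a contradiction. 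Your parenthetical justification covers the ``no orphans of lower-or-equal generation'' half but leaves this half implicit. Second, the identification of $\tria(\Pop_*\setminus\{P\})$ with the result of undoing exactly one (simultaneous) bisection of $\tria(\Pop_*)$ quietly uses conformity of $\tria(\Pop_*)$: since $P\in\nodes(\tria(\Pop_*))$ and $\tria(\Pop_*)$ has no hanging nodes, the one or two triangles of generation $\generation(P)-1$ whose refinement edge has midpoint $P$ must be refined in $\tria(\Pop_*)$, and since $P$ has no children in $\Pop_*$ their children are leaves; merging these leaves yields a conforming triangulation with node set $\Pop_*\setminus\{P\}$, which by uniqueness of the correspondence is $\tria(\Pop_*\setminus\{P\})$. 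With these two remarks added, the induction closes and the argument is complete.
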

The partially ordered set $(\bbP,\le)$ is a {\em lattice}, since for
any $\Pop_1,\Pop_2 \in \bbP$, the {\em lowest upper bound} $\Pop_1
\vee \Pop_2$ and the {\em greatest lower bound} $\Pop_1 \wedge \Pop_2$
exist in $\bbP$, and are given by
\begin{equation} 
  \label{eq:wedgeveepop} 
  \Pop_1 \vee \Pop_2=\Pop_1
  \cup \Pop_2\quad\text{and}\quad \Pop_1 \wedge \Pop_2=\Pop_1 \cap \Pop_2,
\end{equation}
respectively. We call $\Pop_1 \wedge \Pop_2$ the {\em largest common
  coarsening}, and $\Pop_1 \vee \Pop_2$ the {\em smallest common
  refinement} of $\Pop_1$ and $\Pop_2$.

Since $\Pop_\bot \leq \Pop$ for all $\Pop \in \bbP$, we have that 
$\Pop_\bot$ is the {\em bottom} of $(\bbP,\le)$.  Moreover, if we
define $\bbPhat := 
\bbP \cup \set{\Pop^\top}$ and set $\Pop^\top \geq\Pop$ for all $\Pop
\in \bbPhat$, then $\Pop^\top$ is the {\em top} of~$\bbPhat$ and
whence
$\bbPhat$ is a bounded lattice. 

These notions can be transferred to
triangulations $\tria_1, \tria_2
\in \bbT$ via
\begin{align*}
  \tria_1 \vee \tria_2 &:= \tria\big( \Pop(\tria_1) \vee \Pop(\tria_2)
  \big),
  \\
  \tria_1 \wedge \tria_2 &:= \tria\big( \Pop(\tria_1) \wedge
  \Pop(\tria_2) \big).
\end{align*}
Consequently, $(\bbT, \le)$ is a lattice with
bottom~$\tria_\bot$. 
Moreover, we can
add a largest element $\tria^\top= \tria(\Pop^\top)$ to $\bbT$
and define $\bbThat := \bbT \cup \set{\tria^\top}$ and $\tria^\top
\geq \tria$ for all $\tria \in \bbT$. Then $\tria^\top$ is the top
of the bounded lattice $\bbThat$.
\begin{remark}
  \label{rem:interpret}
  An interpretation of $\tria_1 \vee \tria_2$ and $\tria_1 \wedge
  \tria_2$ is given in the following (cf.  \cite[Lemma
  4.3]{NoSiVe:09}).  For $\tria_1,\tria_2 \in \bbT$, let $T_1 \in
  \tria_1$, $T_2 \in \tria_2$ with $\new{|T_1 \cap T_2|}>0$, so
  that either $T_1 \subset T_2$ or $T_2 \subset T_1$.  W.l.o.g., we assume $T_1 \subset T_2$. Then $T_1 \in \tria_1
  \vee \tria_2$ and $T_2 \in \tria_1 \wedge \tria_2$.
\end{remark}

For $\tria \in \bbT$ and $\mathcal{U} \subset \tria$, we define
\begin{align*}
  \Omega(\mathcal{U}) &:= \operatorname{interior} \bigcup \set{T \,:\,
    T \in \mathcal{U}}.
\end{align*}
For $\tria,
\tria_* \in \bbT$ with $\tria \leq \tria_*$, we call $\Omega(\tria
\setminus \tria_*)=\Omega(\tria_* \setminus \tria)$ the {\em area of
  coarsening}. It
is the union of all triangles that are coarsened when passing
from~$\tria_*$ to $\tria$, or, equivalently, the union all triangles that are refined when passing
from~$\tria$ to $\tria_*$. The coarsening point of view, however, will often turn out to be more relevant, in particular in Sect.~\ref{sec:projection}.

Recalling the definition $\tria\plus$ for $\tria \in \bbT$, we set $\Pop\plus
:= \Pop((\tria(\Pop)\plus)$. 
Then \new{$\Pop\plus \setminus \Pop \subset \big(\cup_{P \in \Pop} \children(P) \cup \children(\children(P))\big) \setminus \Pop$, with equality only when all $T \in \tria(\Pop)$ have the same generation, cf. Figure~\ref{P++}.}
\begin{figure}
\input{P++.pspdftex}
\caption{$\Pop$,  $\Pop\plus$, and grandchildren  ($\Box$)  of $P \in \Pop$ that are not in $\Pop\plus$}
\label{P++}
\end{figure}
There is a one-to-one correspondence of $\sides(\tria(\Pop))$ and
$\Pop\plus \setminus \Pop$. Indeed,  denote the midpoint
of a side~$S \in
  \sides(\tria)$ by
$\midpoint{S}$ and set
$\midpoint{\mathcal S}:=\{\midpoint{S}:S \in \sides\}$ for a collection~$\sides$ of sides, then
\begin{align}
  \label{eq:midpoints_pre}
  \Pop\plus \setminus \Pop = \midpoint{\sides(\tria(\Pop))}.
\end{align}
More general, if $\Pop, \Pop_* \in \bbP$ with $\Pop \leq \Pop_*$, then
\begin{equation} 
  \label{eq:midpoints} 
  \Pop_* \cap (
  \Pop\plus \setminus \Pop) =\midpoint{\sides(\tria(\Pop))
    \setminus \sides(\tria(\Pop_*))}.
\end{equation}

For $\tria \in \bbT$, we define 
\begin{align*}
\VoT[\tria] &:=\set{v \in H_0^1(\Omega):v|_T \in P_1(T)\,(T \in \tria)},\\
\mathbb{V}(\tria) &:=\set{v \in H^1(\Omega):v|_T \in P_1(T)\,(T \in \tria)},
\end{align*}
Thanks to the nodal Lagrange basis representation of any finite element function,
the degrees of freedom (DOFs) of $\VoT$ or $\mathbb{V}(\tria)$ can
  be identified with ${\nodes_0(\tria)}$ or ${\nodes(\tria)}$, respectively.
  We set $\VoT[\tria^\top] := H^1_0(\Omega)$ and
$\mathbb{V}(\tria^\top) := H^1(\Omega)$.

The proof of the next proposition is left to the reader.

\begin{proposition}\label{prop:V-lattice}
  The mapping $\tria \mapsto \VoT[\tria]$ from $\bbThat$ to the
  lattice of vector spaces is compatible with the lattice
  structure, i.e.,
  \begin{align*}
    \tria\le\tria_* \quad&\Rightarrow
    \quad\VoT[\tria] \subset \VoT[\tria_*],
    \\
    \VoT[\tria \wedge \tria_*] &=  \VoT[\tria] \cap \VoT[\tria_*],
    \\
    \VoT[\tria \vee \tria_*] &=  \VoT[\tria] + \VoT[\tria_*],
  \end{align*}
  The same holds true when we replace $\VoT$ by $\mathbb{V}(\tria)$.
\end{proposition}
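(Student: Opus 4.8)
The plan is to verify the three displayed identities one at a time for $\VoT$, and then observe that the arguments carry over verbatim to $\mathbb V(\tria)$ upon replacing $\nodes_0$ by $\nodes$, $H^1_0(\Omega)$ by $H^1(\Omega)$, and using the Lagrange property at all (not merely interior) nodes. The cases in which one of $\tria,\tria_1,\tria_2$ is $\tria^\top$ are immediate, since $\VoT[\tria^\top]=H^1_0(\Omega)$ is the largest of these spaces and $\tria^\top$ is neutral for $\wedge$ and absorbing for $\vee$; so I may assume all triangulations lie in $\bbT$. If $\tria\le\tria_*$, every $T'\in\tria_*$ is contained in some $T\in\tria$, so the restriction to $T'$ of a function affine on $T$ is affine; as such functions are continuous and vanish on $\partial\Omega$, this gives $\VoT[\tria]\subset\VoT[\tria_*]$. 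Combined with $\tria_1\wedge\tria_2\le\tria_i\le\tria_1\vee\tria_2$, and the fact that $\VoT[\tria_1\vee\tria_2]$ is a vector space, this already yields ``$\subset$'' in the $\wedge$-identity and ``$\supset$'' in the $\vee$-identity. For the reverse inclusion in the $\wedge$-identity, let $T\in\tria_1\wedge\tria_2$ and pick $x$ in its interior lying on no side of $\tria_1$ or $\tria_2$; applying Remark~\ref{rem:interpret} to the elements $T_1\in\tria_1$ and $T_2\in\tria_2$ whose interiors contain $x$ shows that the larger of $T_1,T_2$ lies in $\tria_1\wedge\tria_2$ and, having interior containing $x$, equals $T$; hence $T\in\tria_1$ or $T\in\tria_2$. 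Thus any $v\in\VoT[\tria_1]\cap\VoT[\tria_2]$ is affine on every element of $\tria_1\wedge\tria_2$ and, lying in $H^1_0(\Omega)$, belongs to $\VoT[\tria_1\wedge\tria_2]$.

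It remains to prove $\VoT[\tria_1\vee\tria_2]\subset\VoT[\tria_1]+\VoT[\tria_2]=:\Psi$. Since the nodes of $\tria_1\vee\tria_2$ are those of $\tria_1$ together with those of $\tria_2$ (cf.~\eqref{eq:wedgeveepop}), and the nodal basis $\{\phi_Q:Q\in\nodes_0(\tria_1\vee\tria_2)\}$ spans $\VoT[\tria_1\vee\tria_2]$, it is enough to show $\phi_Q\in\Psi$ for every such $Q$. Suppose not; among the nodes $Q$ with $\phi_Q\notin\Psi$ pick one of maximal generation, and say $Q\in\nodes_0(\tria_i)$. Expanding the $\tria_i$-nodal function $\phi^{\tria_i}_Q\in\VoT[\tria_i]\subset\Psi$ in the above basis gives $\phi^{\tria_i}_Q=\phi_Q+\sum_R\phi^{\tria_i}_Q(R)\,\phi_R$, where $R$ ranges only over the nodes in $\supp\phi^{\tria_i}_Q$ that are not nodes of $\tria_i$ (at every other node of $\tria_i$ the value of $\phi^{\tria_i}_Q$ vanishes, by the Lagrange property and the homogeneous boundary condition). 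I claim every such $R$ satisfies $\generation(R)>\generation(Q)$: indeed $R\in\overline T$ for some $T\in\tria_i\subset\mathfrak T$ of which $Q$ is a vertex, so $\generation(Q)\le\generation(T)=:g$; since $\tria_i$ is conforming, $R$ is not a vertex of $T$; and since $\{T'\in\mathfrak T:\generation(T')=g\}$ is a conforming triangulation that contains $T$ and, by nestedness of the uniform refinements, all nodes of generation $\le g$, a node lying in $\overline T$ but not a vertex of $T$ must have generation $>g$. By the maximality of $\generation(Q)$ each $\phi_R$ occurring in the sum belongs to $\Psi$, whence $\phi_Q=\phi^{\tria_i}_Q-\sum_R\phi^{\tria_i}_Q(R)\phi_R\in\Psi$, a contradiction. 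Writing an arbitrary $v\in\VoT[\tria_1\vee\tria_2]$ in the nodal basis then shows $v\in\Psi$.

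The only step beyond routine bookkeeping is this last one, and its heart is the generation-monotonicity of the ``leakage'' nodes $R$ that arise when a coarse nodal basis function is re-expanded in the fine nodal basis. That property is what makes the extremal (equivalently, downward-inductive on generation) argument well founded, and it rests squarely on the bisection structure of newest vertex bisection: a node created by a bisection is the midpoint of a refinement edge, hence of strictly higher generation than the coarse nodes surrounding it. I expect this to be the main obstacle; the remainder, including the $\mathbb V(\tria)$-versions (obtained by running the three steps with $\nodes$ for $\nodes_0$ and $H^1(\Omega)$ for $H^1_0(\Omega)$), is straightforward.
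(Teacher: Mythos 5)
The paper deliberately omits a proof of this proposition (``left to the reader''), so there is nothing to compare against; what matters is whether your argument is sound, and it is. The monotonicity and $\wedge$-identities are handled correctly (the use of Remark~\ref{rem:interpret} to show that every element of $\tria_1\wedge\tria_2$ already belongs to $\tria_1$ or $\tria_2$ is exactly the right reduction), and the only genuinely non-trivial claim, $\VoT[\tria_1\vee\tria_2]\subset\VoT[\tria_1]+\VoT[\tria_2]$, is established by a valid extremal/induction-on-generation argument: your key observation that any node $R\in\nodes_0(\tria_1\vee\tria_2)\setminus\nodes(\tria_i)$ at which $\phi^{\tria_i}_Q$ is nonzero lies in $\overline{T}$ for some $T\in\tria_i$ of generation $g\ge\generation(Q)$ without being a vertex of $T$, and hence has generation $>g$ (since the conforming uniform refinement of generation $g$ contains $T$ and all persons of generation $\le g$ among its vertices), makes the downward induction well founded. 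The transfer to $\mathbb{V}(\tria)$ and the degenerate $\tria^\top$ cases are routine, as you say.
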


\subsection{The refinement routine}

For $\Pop \in \bbP$ and a finite set $\mathcal{C} \subset \Pop^\top$,
we denote by $\Pop \oplus \mathcal{C}$ the {\em smallest refinement of
  $\Pop$ in $\bbP$ that contains~$\mathcal{C}$}, i.e.,
\begin{align*}
  \Pop \oplus \mathcal{C} &:= \bigwedge \set{\Pop' \in \bbP\,:\, \Pop'
    \geq\Pop, \mathcal{C} \subset \Pop'}.
\end{align*}
This is well-defined. To see this, \new{recall that,  thanks to the
  matching condition, we have that}
 $\{P \in
\Pop^\top\,:\,\generation(P) \leq k\} \in \bbP$ \new{for all $k\in\setN_0$},
\new{so that the largest common coarsening can be taken over}
the finitely many $\Pop' \in \bbP$ with
$\max_{P \in \Pop'} \generation(P) \leq \max_{P \in \Pop \cup
  \mathcal{C}} \generation(P)$.

For $P \in \Pop^\top$, we also write
$\Pop \oplus P$ instead of  $\Pop \oplus \set{P}$.  Note that 
$$
\Pop_* \oplus \Pop = \Pop_* \oplus (\Pop \setminus
\Pop_*) = \Pop_* \vee \Pop \qquad \text{for all }\Pop,\Pop_*
\in \bbP.
$$

For $\Pop \in \bbP$ and $\mathcal{C} \subset \Pop^\top \setminus
\Pop_\bot$ we denote by $\Pop \ominus \mathcal{C}$ the {\em greatest
  coarsening of~$\Pop$ in $\bbP$ that does not contain~$\mathcal{C}$},
i.e.,
\begin{align*}
  \Pop \ominus \mathcal{C} &:= \bigvee \set{\Pop' \in \bbP \,:\,
    \Pop' \leq\Pop,\, \mathcal{C} \cap \Pop' = \emptyset}.
\end{align*}
For $P \in \Pop^\top \setminus \Pop_\bot$, we also write $\Pop \ominus
P$ for $\Pop \ominus \set{P}$.

For $\tria \in \bbT$ and $\mathcal{U} \subset \tria$, \new{we
  denote  by} 
$\tria_*=\refi{\tria}{\mathcal{U}}$  the {\em smallest
  refinement of $\tria$ in $\bbT$ with $\mathcal{U} \cap
  \tria_*=\emptyset$}, i.e.,
$$
\refi{\tria}{\mathcal{U}}\:= \bigwedge \set{\tria' \in \bbT\,:\, \tria'
  \geq\tria,\, \mathcal{U} \cap \tria'=\emptyset}.
$$
\new{In this definition $\tria'\in\bbT$ can be} \new{restricted to $\tria' \leq \tria\plus$.} 
The set $\mathcal{U}$ is commonly referred to as the set of triangles
that are marked for refinement.

Although no uniform bound for $\#(\tria_* \setminus \tria)
/\#\mathcal{U}$ can be shown, the following important result is valid:
\begin{theorem}[\protect\cite{BinDD04}] 
  \label{thm:BDD}
  For any sequence $(\tria_k)_k \subset \bbT$ defined by
  $\tria_0=\tria_\bot$ and
  $\tria_{k+1}=\refi{\tria_{k}}{\mathcal{U}_k}$ for some ${\mathcal
    U}_k \subset \tria_{k} $, $k=0,1,\ldots$, we have  that
  $$
  \# (\tria_k \setminus \tria_\bot) \Cleq  \sum_{i=0}^{k-1} \#{\mathcal U}_i.
  $$
\end{theorem}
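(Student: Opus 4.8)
The plan is to prove the non-obvious estimate $\#(\tria_k \setminus \tria_\bot) \Cleq \sum_{i=0}^{k-1} \#\mathcal{U}_i$ by the standard accounting argument of Binev--Dahmen--DeVore, carried over to the population/NVB framework set up above. The idea is that whenever a triangle $T \in \tria_i$ is marked for refinement and actually bisected in the step $\tria_i \to \tria_{i+1} = \refi{\tria_i}{\mathcal{U}_i}$, the extra triangles created ``for conformity'' can be charged, via a carefully designed labelling, back to triangles marked in the \emph{same or earlier} steps, in such a way that each marked triangle receives only a bounded total charge. Concretely, I would assign to each triangle $T$ created during the whole process a \emph{label} consisting of a marked triangle (together with the step at which it was marked) that is responsible for its creation; the key combinatorial lemma is that the number of triangles carrying a given label is bounded by a constant depending only on $\tria_\bot$ (through the uniform shape regularity of $\mathfrak T$ and hence the bounded generation gap across any edge, Proposition~\ref{pro:matching}). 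Summing over all labels then yields $\#(\tria_k \setminus \tria_\bot) = \sum_{i} \#(\tria_{i+1}\setminus\tria_i) \Cleq \sum_i \#\mathcal{U}_i$.

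The key steps, in order, are: (1) Fix $i$ and consider the completion process that turns the marked set $\mathcal{U}_i$ into the conforming refinement $\tria_{i+1}$; each triangle that gets bisected does so because some ``chain'' of refinement-edge constraints, propagating from a marked triangle, forces it — by Proposition~\ref{pro:matching} these chains are between triangles whose generations increase by exactly one at each step, and a triangle newly created at generation $g$ can only force bisections at generations $\leq g$, so the constraint chains have length bounded by $g$ minus the minimal generation along the chain. (2) Following BDD, assign to each newly created triangle the marked triangle at the \emph{start} of its forcing chain, recording also the step index; the heart of the matter is to show that for a fixed marked triangle $T^\star \in \mathcal{U}_i$, the triangles across \emph{all later} steps $j \geq i$ that end up labelled by $(T^\star, i)$ number at most $C(\tria_\bot)$. (3) This bound is obtained from a geometric/generation argument: all triangles labelled by $T^\star$ lie in a neighbourhood of $T^\star$ of controlled generation-depth, and uniform shape regularity bounds how many such triangles there can be — here the population picture is convenient, as one can localize the forcing to the region $\Omega(P)$ for the newly inserted vertices $P$ and use Proposition~\ref{prop:areas}. (4) Finally sum: $\#(\tria_k\setminus\tria_\bot) \leq \sum_{i=0}^{k-1} \#(\tria_{i+1}\setminus\tria_i) \leq C(\tria_\bot) \sum_{i=0}^{k-1}\#\mathcal{U}_i$.

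The main obstacle, and the part requiring real care, is step (2)--(3): making precise the labelling so that a single marked triangle is charged only boundedly many times \emph{across the entire sequence of refinement steps}, not just within one step. The subtlety is that a triangle created (but not marked) in step $i$ can itself be marked in step $j > i$, and one must avoid double-counting or unbounded accumulation; the BDD resolution is to always trace a forced bisection back to the \emph{originally marked} triangle that initiated it and to exploit that the generations along any forcing chain are strictly monotone, so that a fixed triangle in $\mathfrak T$ can lie on only boundedly many such chains originating from a fixed $T^\star$. Since this is precisely Theorem~3.1 of \cite{BinDD04} (stated there for NVB), I would in fact not redo the combinatorics from scratch but rather invoke that result directly, remarking that our definition of $\refi{\tria}{\mathcal{U}}$ coincides with the completion procedure analysed there; the only thing to check is that restricting the competing $\tria'$ to $\tria' \leq \tria\plus$ (as noted after the definition of $\refi{}{}$) does not change the outcome, which follows because the completion of $\mathcal{U}_i$ never needs to bisect a triangle more than twice relative to $\tria_i$.
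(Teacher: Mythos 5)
Theorem~\ref{thm:BDD} is stated in the paper with a citation to \cite{BinDD04} and carries no proof at all in the text, so your ultimate decision to invoke Binev--Dahmen--DeVore directly --- after the sanity check that $\refi{\tria}{\mathcal U}$ agrees with the smallest-conforming-refinement operator analysed there, with $\refi{\tria}{\mathcal U}\le\tria\plus$ holding simply because $\tria\plus$ is itself an admissible competitor in the defining $\bigwedge$ --- is exactly what the paper does. Your preliminary sketch of the amortization argument is a fair outline of the BDD ideas (though their actual accounting is via a real-valued charge function with geometric decay in generation and distance rather than a one-to-one labelling with a uniform per-label bound), and since you defer to the reference anyway this imprecision is immaterial.
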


Our adaptive finite element routine will be driven by the marking of
{\em edges} for refinement. Therefore, for $\tria \in \bbT$ and
$\mathcal{M} \subset \sides(\tria)$, let
$\tria_*=\refine{\tria}{\mathcal{M}}$ denote the {\em smallest
  refinement of $\tria$ in $\bbT$ with $\mathcal{M} \cap
  \sides(\tria_*)=\emptyset$}, i.e.,
$$
\refine{\tria}{\mathcal{M}} \:= \bigwedge \set{\tria' \in \bbT\,:\,
  \tria' \geq\tria,\, \mathcal{M} \cap \sides(\tria')=\emptyset}.
$$
Note that $\refine{\tria}{\mathcal{M}} =\tria(\Pop(\tria)\oplus \midpoint{\mathcal{M}})$.

Setting
\begin{align*}
  {\mathcal U}_1& =\{T \in \tria: \mathcal{M} \text{ contains an edge
    of } T\},
  \\
  {\mathcal U}_2& =\{T' \in \new{\refi{\tria}{\mathcal{U}_1}}: \mathcal{M}
  \text{ contains an edge of } T'\},
\end{align*}
we have that 
$$
\refine{\tria}{\mathcal{M}}=\refi{\refi{\tria}{\mathcal{U}_1}}{\mathcal{U}_2}.
$$
Since moreover $\#\mathcal{U}_1+\#\mathcal{U}_2 \leq 4 \cdot
\#\mathcal{M}$, we conclude the following result.

\begin{corollary} 
  \label{cor:BDD}
  For any sequence $(\tria_k)_k \subset
  \bbT$ defined by $\tria_0=\tria_\bot$ and
  $\tria_{k+1}=\refine{\tria_k}{\mathcal{M}_k}$ for some ${\mathcal
    M}_k \subset \sides(\tria_{k})$, $k=0,1,\ldots$, we have
 that
  $$
  \# (\tria_k \setminus \tria_\bot) \Cleq \sum_{i=0}^{k-1} \#{\mathcal
    M}_i .
  $$
\end{corollary}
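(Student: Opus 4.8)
The plan is to reduce the edge-driven refinement to two successive applications of the triangle-driven refinement routine, and then invoke Theorem~\ref{thm:BDD}. The identity
\[
  \refine{\tria}{\mathcal{M}}=\refi{\refi{\tria}{\mathcal{U}_1}}{\mathcal{U}_2}
\]
recorded just above the statement already does the structural work: one step of \textsf{Refine} with marked edges $\mathcal M$ equals two nested steps of \textsf{Ref}, first marking the triangles $\mathcal U_1$ that carry a marked edge, then marking the triangles $\mathcal U_2$ in the intermediate triangulation $\refi{\tria}{\mathcal U_1}$ that still carry a marked edge. The first thing I would do is therefore make this identity precise, checking that after refining away $\mathcal U_1$ the only edges of $\mathcal M$ possibly still present are those internal to the original triangles, and that refining away $\mathcal U_2$ removes them all, so that indeed $\mathcal M \cap \sides(\refi{\refi{\tria}{\mathcal U_1}}{\mathcal U_2}) = \emptyset$, and minimality matches on both sides.

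Next I would verify the cardinality bound $\#\mathcal U_1 + \#\mathcal U_2 \le 4\,\#\mathcal M$. Each edge in $\mathcal M$ is an edge of at most two triangles of $\tria$, which bounds $\#\mathcal U_1 \le 2\,\#\mathcal M$; and after one bisection step each original triangle is split into finitely many children, so an edge of $\mathcal M$ (or a piece of it surviving in $\refi{\tria}{\mathcal U_1}$) is again an edge of at most two triangles of the intermediate triangulation, giving $\#\mathcal U_2 \le 2\,\#\mathcal M$. Combining, $\#\mathcal U_1 + \#\mathcal U_2 \le 4\,\#\mathcal M$, uniformly in $\tria$ and $\mathcal M$.

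With these two facts in hand, the sequence $(\tria_k)_k$ produced by $\tria_{k+1} = \refine{\tria_k}{\mathcal M_k}$ is, after interleaving, exactly a sequence of the form treated in Theorem~\ref{thm:BDD}: replace the $k$th \textsf{Refine} step by the two \textsf{Ref} steps with marked sets $\mathcal U_1^{(k)} \subset \tria_k$ and $\mathcal U_2^{(k)} \subset \refi{\tria_k}{\mathcal U_1^{(k)}}$. Applying Theorem~\ref{thm:BDD} to this doubled sequence yields
\[
  \#(\tria_k \setminus \tria_\bot) \Cleq \sum_{i=0}^{k-1}\big(\#\mathcal U_1^{(i)} + \#\mathcal U_2^{(i)}\big)
  \le 4\sum_{i=0}^{k-1}\#\mathcal M_i \Cleq \sum_{i=0}^{k-1}\#\mathcal M_i,
\]
which is the claim. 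The only mild subtlety—and the place I would be most careful—is the bookkeeping in the first paragraph: making sure that "an edge of $\mathcal M$" still makes sense as an object in the intermediate triangulation (an original edge may survive intact, or its two halves may appear), and that the definition of $\mathcal U_2$ via "contains an edge of" is interpreted so that marking $\mathcal U_2$ genuinely removes every remnant of $\mathcal M$. Everything else is a direct consequence of Theorem~\ref{thm:BDD} and elementary counting.
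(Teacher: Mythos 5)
Your proposal matches the paper's argument: the paper introduces the identity $\refine{\tria}{\mathcal{M}}=\refi{\refi{\tria}{\mathcal{U}_1}}{\mathcal{U}_2}$ and the bound $\#\mathcal{U}_1+\#\mathcal{U}_2\le 4\,\#\mathcal{M}$ immediately before the corollary, and concludes it by interleaving and applying Theorem~\ref{thm:BDD}, exactly as you do. The only difference is that you spell out the double-counting for $\#\mathcal{U}_1,\#\mathcal{U}_2$ and flag the minimality check, which the paper leaves implicit.
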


\new{
Since every simultaneous bisection of a pair of triangles that share their refinement edge increases the population by one, and
the number of triangles by two, and every bisection of a triangle that has its refinement edge on the boundary increases both the population and the number of triangles by one,} we observe that for
$\Pop, \Pop_* \in \bbP$ with $\Pop_* \geq \Pop$,
\begin{align}
  \label{eq:no_P_vs_T}
  \# (\Pop_* \setminus \Pop) &\leq \# (\tria(\Pop_*) \setminus
  \tria(\Pop)) \leq 2\, \# (\Pop_* \setminus \Pop).
\end{align}
This result will allow us to transfer Corollary~\ref{cor:BDD} in terms of
populations.

\section{Continuous problem, its discretisation, \\and the lower diamond
  estimate} \label{sec:projection} 
In this section, we shall introduce the model problem. Moreover, we
shall investigate a splitting of the difference of energies related to
nested spaces. To \new{the best of our knowledge}, this so-called lower diamond estimate  
is new, and it plays a crucial role in the proof of the instance
optimality of the AFEM in Section~\ref{sec:simpl-optim}.
\subsection{Continuous and discrete problem}
We consider the model setting of  Poisson's equation 
\begin{alignat}{2}
  \label{eq:problem}
  \begin{aligned}
    -\Delta u &= f &&\qquad \text{on } \Omega,
    \\
    u &=0 &&\qquad \text{on } \partial \Omega,
  \end{aligned}
\end{alignat}
\new{where, in view of the application of an a posteriori error estimator, we assume that $f \in L^2(\Omega)$.}
In weak form, \new{it} reads as finding $u:=u_{\tria^\top} \in H^1_0(\Omega)\new{=\mathbb{V}_0(\tria^\top)}$ such that
$$
\int_\Omega \nabla u \cdot \nabla v \,\dx=\int_\Omega fv\,\dx \qquad(v \in H^1_0(\Omega)).
$$

For $\tria \in \bbT$, the Galerkin approximation $\uT\in\VoT$ of $u$
is uniquely defined by
\begin{align} \label{eq:discrete}
  \int_\Omega \nabla \uT \cdot \nabla \vT \,\dx = \int_\Omega f\vT\,\dx
  \qquad(\vT \in \VoT).
\end{align}

It is well known, that for \new{$\tria \in \hat{\bbT}$, $\uT$ 
is the unique minimiser of
the (Dirichlet) energy
\begin{align*}
  \mathcal{J}(v) := \int_\Omega \frac 12 \abs{\nabla v}^2 -\new{f v}\,\dx 
   \qquad (v\in \VoT).
\end{align*}
}

Setting
$$
   \mathcal{J}(\tria) := \mathcal{J}(\uT),
$$
Proposition~\ref{prop:V-lattice}
shows that $\mathcal{J}$ is non-increasing with respect to~$(\bbThat,\le)$, i.e.,
for $\tria,\tria_* \in \bbThat$, we have
\begin{align}
  \label{eq:J-monotone}
  \tria \leq \tria_* \qquad \Rightarrow \qquad \mathcal{J}(\tria) \geq
  \mathcal{J}(\tria_*).
\end{align}
\new{Moreover, from basic calculations we observe that for $\tria \in
\bbT$, we have
\begin{equation} \label{eq:Ediff}
\mathcal{J}(\tria) - \mathcal{J}(\tria_*) =
 \frac12
\abs{\uT-\uT[\tria_*]}^2_{H^1(\Omega)}
\end{equation}
for all
$\tria_*\in \hat{\bbT}$ with $\tria\le\tria_*$.}

\subsection{The lower diamond estimate}
To formulate the main result from this subsection, we have to start with a definition.
\begin{definition} \label{def:lower_dia_tria} For
    $\set{\tria_1, \dots, \tria_m} \subset \bbT$, we call
    $(\tria^\wedge,\tria_\vee;\tria_1, \dots, \tria_m)$ a {\em lower
      diamond} in $\bbT$ of size $m$, 
       when $\tria^\wedge=\bigwedge_{j=1}^m \tria_j$,
    $\tria_\vee=\bigvee_{j=1}^m \tria_j$, and the areas of {\em
      coarsening} $\Omega(\tria_j \setminus \tria_\vee)$ are pairwise
    disjoint, cf. Figure~\ref{fig:diamond} for an illustration.
  
  It is called an {\em upper diamond} in $\bbT$ of size $m$, when the last condition reads as
  the areas of {\em refinement} $\Omega(\tria^\wedge \setminus \tria_j)$ being
  pairwise disjoint.
  \end{definition}

\begin{figure}[h]
\centering
\input{multifaced_diamond_low.pspdftex}  
\caption{Lower (or upper) diamond of size~4.}
 \label{fig:diamond}
\end{figure}

Obviously, for any $\tria \in \bbT$, $(\tria,\tria;\tria)$ is a lower
(and upper) diamond in $\bbT$ of size 1. More interesting is the
following result: 
\begin{lemma} \label{lem:lowersize2} 
 For any $\tria_1 \neq \tria_2 \in \bbT$, $(\tria_1\wedge
    \tria_2,\tria_1 \vee \tria_2;\tria_1,\tria_2)$ is a lower (and
    upper) diamond in $\bbT$ of size 2.
\end{lemma}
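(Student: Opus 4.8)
The plan is to notice that the identities $\tria^\wedge=\tria_1\wedge\tria_2$ and $\tria_\vee=\tria_1\vee\tria_2$ required in Definition~\ref{def:lower_dia_tria} hold by construction, so that the whole statement reduces to two disjointness claims: that the areas of coarsening $\Omega(\tria_1\setminus\tria_\vee)$ and $\Omega(\tria_2\setminus\tria_\vee)$ are disjoint (lower diamond), and that the areas of refinement $\Omega(\tria^\wedge\setminus\tria_1)$ and $\Omega(\tria^\wedge\setminus\tria_2)$ are disjoint (upper diamond). Both are immediate consequences of Remark~\ref{rem:interpret}; in fact the hypothesis $\tria_1\neq\tria_2$ will not even be needed.

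For the lower diamond, I would fix arbitrary triangles $T_1\in\tria_1\setminus\tria_\vee$ and $T_2\in\tria_2\setminus\tria_\vee$ and show $|T_1\cap T_2|=0$: if instead $|T_1\cap T_2|>0$, then Remark~\ref{rem:interpret} says that one of $T_1,T_2$ is contained in the other and that the smaller (i.e.\ the contained) one lies in $\tria_1\vee\tria_2=\tria_\vee$, contradicting the choice of $T_1$ and $T_2$. Since $\tria_1\setminus\tria_\vee$ and $\tria_2\setminus\tria_\vee$ are finite sets of closed triangles, the union of the former and the union of the latter then meet in a finite union of measure-zero sets, hence in a set with empty interior; as $\Omega(\tria_i\setminus\tria_\vee)$ is, by definition, the interior of the union of the triangles in $\tria_i\setminus\tria_\vee$, this forces $\Omega(\tria_1\setminus\tria_\vee)\cap\Omega(\tria_2\setminus\tria_\vee)=\emptyset$, as required.

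The upper diamond is handled in exactly the same way after recalling $\Omega(\tria^\wedge\setminus\tria_i)=\Omega(\tria_i\setminus\tria^\wedge)$: for $T_1\in\tria_1\setminus\tria^\wedge$ and $T_2\in\tria_2\setminus\tria^\wedge$ with $|T_1\cap T_2|>0$, Remark~\ref{rem:interpret} now puts the \emph{larger} of the two triangles into $\tria_1\wedge\tria_2=\tria^\wedge$, which is again impossible, and the same measure-theoretic conclusion applies. The only point that needs a little care is this last step, the passage from ``every pair of triangles, one from each collection, overlaps in measure zero'' to ``the two areas are disjoint as open sets'', which rests on the finiteness of the triangulations together with the fact that a nonempty open subset of $\mathbb{R}^2$ has positive measure; beyond this I expect no real obstacle.
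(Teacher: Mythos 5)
Your proof is correct and follows essentially the same route as the paper's: both reduce the statement to the two disjointness claims and settle them by the characterization of $\tria_1\vee\tria_2$ and $\tria_1\wedge\tria_2$ in Remark~\ref{rem:interpret}. The only (cosmetic) difference is that for the lower diamond the paper first rewrites $\Omega(\tria_j\setminus\tria_\vee)$ as $\Omega(\tria_\vee\setminus\tria_j)$ and extracts a single triangle $T\in\tria_\vee\setminus(\tria_1\cup\tria_2)$, whereas you compare pairs $T_1\in\tria_1\setminus\tria_\vee$, $T_2\in\tria_2\setminus\tria_\vee$ directly; your concluding measure-theoretic step (open intersection contained in a null set, hence empty) is the same passage the paper leaves implicit.
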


\begin{proof}
Setting $\tria^\wedge:=\tria_1\wedge \tria_2$, $\tria_\vee:=\tria_1 \vee \tria_2$,
assume that $\Omega(\tria_1 \setminus
  \tria_\vee)$ and $\Omega(\tria_2 \setminus
  \tria_\vee)$ are not
  disjoint. Recalling that $\Omega(\tria_j\setminus
  \tria_\vee)=\Omega(\tria_\vee\setminus \tria_j)$, then
  there exists a $T\in (\tria_\vee \setminus
  \tria_1)\cap (\tria_\vee \setminus \tria_2)=\tria_\vee
  \setminus(\tria_1\cup\tria_2)$. This contradicts $\tria_\vee =
  \tria_1 \vee \tria_2$; compare also with Remark~\ref{rem:interpret},
  and thus $(\tria^\wedge,\tria_\vee;\tria_1,\tria_2)$ is a lower
  diamond. 
  
  Similarly, one finds that $(\tria_1 \setminus \tria^\wedge) \cap
  (\tria_2 \setminus \tria^\wedge)=\emptyset$, i.e.,
  $(\tria^\wedge,\tria_\vee;\tria_1,\tria_2)$ is an upper diamond. 
\end{proof}

The main goal of this subsection is to prove the following
result: 
\begin{theorem} \label{thm:stable-splitting}
  Let $(\tria^\wedge,\tria_\vee;\tria_1, \dots, \tria_m)$ be a lower diamond in $\bbT$.
  Then
  \begin{align}
    \abs{u_{\tria_\vee}- u_{\tria^\wedge}}_{H^1(\Omega)}^2 &\eqsim \sum_{j=1}^m\!
    \abs{u_{\tria_\vee}-u_{\tria_j}}_{H^1(\Omega)}^2,
  \end{align}
 only dependent on $\tria_\bot$.
\end{theorem}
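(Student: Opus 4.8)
The plan is to reduce the general lower diamond estimate to the size-$2$ case (Lemma~\ref{lem:lowersize2}) by an induction on the size $m$, and for the size-$2$ case to exploit the fact that the Galerkin solutions live in a lattice of finite element spaces (Proposition~\ref{prop:V-lattice}) together with the energy identity \eqref{eq:Ediff}. The key observation in the size-$2$ case is that $\VoT[\tria_1 \vee \tria_2] = \VoT[\tria_1] + \VoT[\tria_2]$ and $\VoT[\tria_1 \wedge \tria_2] = \VoT[\tria_1] \cap \VoT[\tria_2]$, so that $u_{\tria_\vee} - u_{\tria^\wedge}$ decomposes along a pair of ``complementary'' subspaces; the disjointness of the areas of coarsening should translate into a geometric decoupling (near-orthogonality in the $H^1$-seminorm) of the two error pieces $u_{\tria_\vee} - u_{\tria_1}$ and $u_{\tria_\vee} - u_{\tria_2}$. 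Concretely, using \eqref{eq:Ediff} I would write $\tfrac12\abs{u_{\tria_\vee} - u_{\tria^\wedge}}^2 = \mathcal{J}(\tria^\wedge) - \mathcal{J}(\tria_\vee)$ and similarly for the summands, so the claimed equivalence becomes $\mathcal{J}(\tria^\wedge) - \mathcal{J}(\tria_\vee) \eqsim \sum_{j=1}^m \big(\mathcal{J}(\tria_j) - \mathcal{J}(\tria_\vee)\big)$, i.e. an equivalence of energy drops.

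For the \emph{upper bound} ($\lesssim$) in the size-$2$ case: since $u_{\tria^\wedge} \in \VoT[\tria_1] \cap \VoT[\tria_2]$, I would test the Galerkin orthogonalities and split $u_{\tria_\vee} - u_{\tria^\wedge}$ using a partition of unity / localization subordinate to the disjoint coarsening areas, estimating each piece by the local energy decrease; the disjointness of $\Omega(\tria_1\setminus\tria_\vee)$ and $\Omega(\tria_2\setminus\tria_\vee)$ ensures the pieces are supported in (essentially) disjoint regions so the cross terms vanish or are controlled. For the \emph{lower bound} ($\gtrsim$): each $\tria_j \geq \tria^\wedge$, so $\VoT[\tria_j] \supseteq \VoT[\tria^\wedge]$ and by the Pythagoras-type identity coming from Galerkin orthogonality, $\abs{u_{\tria_\vee} - u_{\tria^\wedge}}^2 = \abs{u_{\tria_\vee} - u_{\tria_j}}^2 + \abs{u_{\tria_j} - u_{\tria^\wedge}}^2 \geq \abs{u_{\tria_\vee} - u_{\tria_j}}^2$; summing over $j$ would give a factor $m$ on the wrong side, so here too one needs the disjointness to see that the $m$ pieces $u_{\tria_\vee} - u_{\tria_j}$, being supported in pairwise disjoint areas of coarsening, actually add up (in squared seminorm) to at most $\abs{u_{\tria_\vee} - u_{\tria^\wedge}}^2$ up to a constant, via the finite overlap of the refinement/coarsening regions. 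To pass from $m=2$ to general $m$, I would peel off one leaf of the diamond at a time: write $\tria_\vee' = \bigvee_{j=2}^m \tria_j$, observe that $(\tria^\wedge, \tria_\vee'; \tria_2,\dots,\tria_m)$ is again a lower diamond of size $m-1$ and that $(\tria_1 \wedge \tria_\vee', \tria_\vee; \tria_1, \tria_\vee')$ is a lower diamond of size $2$ (this needs checking against Definition~\ref{def:lower_dia_tria}), then combine the inductive hypothesis with the size-$2$ estimate, being careful that the equivalence constants do not degrade with $m$ — which is exactly where the pairwise disjointness of coarsening areas (finite overlap) does the work.

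The main obstacle I expect is the lower bound, and more precisely controlling the equivalence constants uniformly in $m$: the naive Pythagoras argument loses a factor $m$, so one genuinely has to turn the geometric hypothesis (disjoint areas of coarsening) into a quantitative near-orthogonality statement for the functions $u_{\tria_\vee} - u_{\tria_j}$. I would handle this by localizing: on each area $\Omega(\tria_j \setminus \tria_\vee)$ the difference $u_{\tria_j}-u_{\tria_\vee}$ is ``responsible'' for the local energy gain there, while outside that area the relevant spaces already coincide with $\VoT[\tria_\vee]$ in a neighborhood, so the supports (after a suitable cut-off respecting shape regularity of $\mathfrak{T}$) have bounded overlap. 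The shape-regularity of $\mathfrak{T}$ and the NVB structure (Proposition~\ref{pro:matching}) guarantee that the ``collar'' around each coarsening area that one picks up under localization still has bounded overlap, so summing local contributions costs only an absolute constant depending on $\tria_\bot$. Assembling these localized estimates and feeding them through the size-$2$ inductive step then yields the claimed two-sided bound with constant depending only on $\tria_\bot$.
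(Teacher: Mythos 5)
The paper does not use induction on $m$ at all; it handles general $m$ in one stroke by constructing a family of Scott--Zhang--type quasi-interpolators $\Pi_j := \Pi_{\tria_\vee \to \tria_j}$ (Lemma~\ref{pro:proj}) with the exact localization property that $v - \Pi_j v$ vanishes outside $\overline{\Omega_j}$ for $v \in \VT[\tria_\vee]$, showing these projectors commute (Theorem~\ref{thm:Pi_commutes}), and then transferring between the Galerkin error and the quasi-interpolation error via the near-best-approximation equivalence of Lemma~\ref{lemma:Lebesgue}. Your proposal is a genuinely different route, and there are two concrete gaps in it. First, the inductive peeling does not work as stated: if $\tria_\vee' := \bigvee_{j\ge 2}\tria_j$, the tuple $(\tria^\wedge,\tria_\vee';\tria_2,\dots,\tria_m)$ is generally \emph{not} a lower diamond, because $\bigwedge_{j\ge 2}\tria_j$ can be strictly coarser than it needs to be and in any case need not equal $\tria^\wedge$; and even after repairing the definition, the equivalence constant from Lemma~\ref{lem:lowersize2} compounds once per peel, giving a constant of the form $c^{m}$. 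You acknowledge this and say ``this is where the disjointness does the work,'' but you do not supply the mechanism, and naive peeling cannot supply it.

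Second, and more fundamentally, the locality heuristic you invoke is false for Galerkin solutions: $u_{\tria_\vee} - u_{\tria_j}$ is \emph{not} supported (even approximately) in $\Omega_j = \Omega(\tria_j\setminus\tria_\vee)$, since the Galerkin projection is a global operation; so ``the supports have bounded overlap'' is not available, and multiplying by a cut-off destroys membership in $\VoT[\tria_j]$ and the orthogonality one would like to use. The heart of the paper's argument is precisely how to circumvent this: one replaces $u_{\tria_j}$ by $\Pi_j u_{\tria_\vee}$, which, by property {\rm(Pr3)} of Lemma~\ref{pro:proj}, \emph{does} agree with $u_{\tria_\vee}$ exactly on $\overline{\Omega\setminus\Omega_j}$, so the differences $u_{\tria_\vee} - \Pi_j u_{\tria_\vee}$ have genuinely disjoint supports and their squared $H^1$-seminorms add exactly; the equivalence $\abs{u_{\tria_\vee} - u_{\tria_j}}_{H^1} \eqsim \abs{u_{\tria_\vee} - \Pi_j u_{\tria_\vee}}_{H^1}$ of Lemma~\ref{lemma:Lebesgue} (valid for any uniformly $H^1$-bounded projector onto $\VoT[\tria_j]$) then transfers this identity back to the Galerkin errors with constants depending only on $\tria_\bot$ and independent of $m$. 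Without this substitute-and-transfer step your argument does not close.
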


The first ingredient to prove this theorem is the following observation.
\begin{lemma} \label{lemma:Lebesgue}
Let $\tria, \tria_* \in \bbThat$ with $\tria \leq \tria_*$,
and let $\Pi:\VoT[\tria_*]\to\VoT[\tria_*]$ be a \new{linear} projector onto
$\VoT$ which is  $H^1(\Omega)$-bounded, uniformly in $\tria,
\tria_*$.  Then,
$$
\abs{u_{\tria_*}-u_\tria}_{H^1(\Omega)} \eqsim \abs{u_{\tria_*}-\Pi
    u_{\tria_*}}_{H^1(\Omega)}.
    $$
\end{lemma}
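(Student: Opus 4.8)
The plan is to use Galerkin orthogonality to pin down both quantities. On the one hand, since $u_\tria\in\VoT[\tria]\subset\VoT[\tria_*]$ is the Galerkin projection of $u_{\tria_*}$ onto $\VoT[\tria]$ in the $H^1_0$-inner product $\langle\nabla\cdot,\nabla\cdot\rangle$, we have the classical best-approximation property
\begin{align*}
  \abs{u_{\tria_*}-u_\tria}_{H^1(\Omega)}
  = \min_{v\in\VoT[\tria]} \abs{u_{\tria_*}-v}_{H^1(\Omega)}
  \le \abs{u_{\tria_*}-\Pi u_{\tria_*}}_{H^1(\Omega)},
\end{align*}
because $\Pi u_{\tria_*}\in\VoT[\tria]$. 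This gives the ``$\lesssim$'' direction of the asserted equivalence with constant $1$, and uses only that $\Pi$ maps into $\VoT[\tria]$ and the Pythagoras/Galerkin-orthogonality identity \eqref{eq:Ediff} (or its elementary proof). One should note that the degenerate cases $\tria=\tria^\top$ and $\tria_*=\tria^\top$ are harmless: when $\tria_*=\tria^\top$, $u_{\tria_*}=u$, and when $\tria=\tria^\top$ both sides are zero; otherwise everything takes place in the genuine finite element spaces.

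For the reverse inequality I would again invoke Galerkin orthogonality, now from the other side: $u_{\tria_*}-u_\tria$ is $H^1_0$-orthogonal to $\VoT[\tria]$, hence in particular orthogonal to $\Pi u_{\tria_*}-u_\tria\in\VoT[\tria]$. Therefore, writing $u_{\tria_*}-\Pi u_{\tria_*} = (u_{\tria_*}-u_\tria) - (\Pi u_{\tria_*}-u_\tria)$ and using that $\Pi u_{\tria_*}-u_\tria = \Pi(u_{\tria_*}-u_\tria)$ (since $\Pi$ is a projector onto $\VoT[\tria]$ and $u_\tria\in\VoT[\tria]$), we get
\begin{align*}
  \abs{u_{\tria_*}-\Pi u_{\tria_*}}_{H^1(\Omega)}
  &\le \abs{u_{\tria_*}-u_\tria}_{H^1(\Omega)} + \abs{\Pi(u_{\tria_*}-u_\tria)}_{H^1(\Omega)}
  \\
  &\le \bigl(1+\norm{\Pi}\bigr)\,\abs{u_{\tria_*}-u_\tria}_{H^1(\Omega)},
\end{align*}
where $\norm{\Pi}$ denotes the $H^1(\Omega)$-operator norm of $\Pi$, which by hypothesis is bounded uniformly in $\tria,\tria_*$. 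This yields ``$\gtrsim$'' with a constant depending only on that uniform bound (and hence, in the applications of the lemma, only on $\tria_\bot$).

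There is really no serious obstacle here; the only point that needs a little care is the bookkeeping around $\Pi$. One must use that $\Pi$ restricted to $\VoT[\tria]$ is the identity (so $\Pi u_\tria=u_\tria$), which is exactly what ``projector onto $\VoT$'' means, and one must be slightly careful that the seminorm $\abs{\cdot}_{H^1(\Omega)}$ controls the relevant quantities — this is fine because on $\VoT[\tria_*]$ (functions in $H^1_0(\Omega)$) the seminorm is a norm by Friedrichs, and on $\mathbb{V}(\tria_*)$-type spaces one would instead phrase the hypothesis with the full $H^1$-norm; in the stated lemma the spaces are the $\VoT$'s, so the seminorm suffices. I would present the argument in the two short displays above and conclude
\begin{align*}
  \abs{u_{\tria_*}-u_\tria}_{H^1(\Omega)}
  \le \abs{u_{\tria_*}-\Pi u_{\tria_*}}_{H^1(\Omega)}
  \le \bigl(1+\norm{\Pi}\bigr)\,\abs{u_{\tria_*}-u_\tria}_{H^1(\Omega)},
\end{align*}
which is the claimed equivalence $\eqsim$ with implied constants depending only on the uniform $H^1$-bound for $\Pi$.
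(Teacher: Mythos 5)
Your proof is correct and follows essentially the same route as the paper's: the first inequality is the best-approximation property of $u_\tria$, and the second is the triangle inequality plus $\Pi u_\tria = u_\tria$ and the uniform $H^1$-bound on $\Pi$ (the paper phrases this with a general $v_\tria\in\VoT$, but taking $v_\tria=u_\tria$ recovers your display). The invocation of Galerkin orthogonality in your second step is harmless but unnecessary — only the projector property $\Pi u_\tria=u_\tria$ is used there.
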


\begin{proof} Use that $u_\tria$ is the best approximation from $\VoT$
  to $u_{\tria_*}$ in $|\cdot|_{H^1(\Omega)}$, and 
$|u_{\tria_*}-\Pi u_{\tria_*}|_{H^1(\Omega)} \leq |u_{\tria_*}-v_{\tria}|_{H^1(\Omega)} +|\Pi(v_{\tria}- u_{\tria_*})|_{H^1(\Omega)} 
\Cleq
\abs{u_{\tria_*} - v_{\tria}}_{H^1(\Omega)}$ for all $v_\tria
  \in \VoT$.
\end{proof}

In order to localize the projection error to the area of coarsening, we shall consider 
a particular Scott-Zhang type quasi-interpolator~\cite{ScoZha90}. 

\begin{lemma}
  \label{pro:proj} 
  Let $\tria, \tria_* \in \bbT$ with $\tria \leq \tria_*$. Let
  $\Omega_1 := \Omega(\tria \setminus \tria_*)$ and $\Omega_2 :=
  \Omega \setminus \overline{\Omega}_1$.  There exists a projector
  $\Pi_{\tria_*\to \tria}:H^1(\Omega) \rightarrow H^1(\Omega)$ onto
  $\VT$ with the following properties
  \begin{enumerate}[label={{\tt(Pr\arabic{*})}}]
  \item \label{itm:proj_bnd} $\abs{ \Pi_{\tria_* \to \tria}v}_{H^1(\Omega)} \lesssim \abs{v}_{H^1(\Omega)}$ for all $v \in H^1(\Omega)$. 
  \item \label{itm:proj_Omega_j} There exist projectors
    $\new{\bar{\Pi}_{\tria,i}}:H^1(\Omega_i)\rightarrow H^1(\Omega_i)$
    onto $\VT|_{\Omega_i}:=\{v|_{\Omega_i}:v \in \VT\}$, 
    \new{with for any $T \in \tria$ with $T \subset \bar{\Omega}_i$,
    $$
    |\bar{\Pi}_{\tria,i}
    v_i|^2_{H^1(T)}\Cleq \sum_{\{T' \in \tria: T' \cap T \neq \emptyset,\,T' \subset  \bar{\Omega}_i\}} |v_i|^2_{H^1(T')}$$}
    for all $v_i \in H^1(\Omega_i)$, $i=1,2$,
    such that
    \begin{align*}
      (\Pi_{\tria_*\to\tria} v)|_{\Omega_i}=\new{\bar{\Pi}_{\tria,i}}
      (v|_{\Omega_i}),\qquad i=1,2.
    \end{align*}
  \item \label{itm:proj_compl} $v - \Pi_{\tria_* \to \tria} v$
    vanishes on $\overline{\Omega}_2$ for all $v \in \VT[\tria_*]$.
  \item \label{itm:proj_V0} $\Pi_{\tria_* \to \tria}(
    \VoT[\tria_*]) \subset  \VoT$.
  \end{enumerate}
\end{lemma}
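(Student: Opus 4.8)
The plan is to take $\Pi_{\tria_*\to\tria}$ to be a Scott--Zhang type quasi-interpolator onto $\VT$ (in the spirit of \cite{ScoZha90}) whose dual functionals are placed so as to respect the splitting $\overline{\Omega}=\overline{\Omega}_1\cup\overline{\Omega}_2$. For each $z\in\nodes(\tria)$ I would fix a (possibly lower-dimensional) simplex $\sigma_z$ with $z\in\overline{\sigma_z}$ and $\sigma_z\subset\overline{T}$ for some $T\in\tria$ with $z\in T$, together with the function $\psi_z\in L^2(\sigma_z)$ dual to the restricted nodal basis $\{\phi_w|_{\sigma_z}\}$, and set $\Pi_{\tria_*\to\tria}v:=\sum_{z\in\nodes(\tria)}\big(\int_{\sigma_z}\psi_z\,v\big)\,\phi_z$, where $\phi_z$ denotes the nodal hat function of $z$. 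The construction hinges on the choice of $\sigma_z$. If $z$ lies on the interface $\overline{\Omega}_1\cap\overline{\Omega}_2$, I take $\sigma_z$ to be an edge of $\tria$ through $z$ lying on that interface. Otherwise $z$ belongs to exactly one of $\overline{\Omega}_1\setminus\overline{\Omega}_2$, $\overline{\Omega}_2\setminus\overline{\Omega}_1$, say to $\overline{\Omega}_i$, and then the whole star of $z$ in $\tria$ lies in $\overline{\Omega}_i$; in this case I take $\sigma_z$ to be a boundary edge of $\tria$ through $z$ if $z\in\partial\Omega$, and a triangle of $\tria$ containing $z$ otherwise, in either case inside $\overline{\Omega}_i$. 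The first thing I would record is the geometric fact that each interface edge of $\tria$ is shared by a triangle that is refined in $\tria_*$ and one that is not, so that every interface edge, as well as every boundary edge through a vertex of $\overline{\Omega}_2\setminus\overline{\Omega}_1$, lies in $\sides(\tria_*)$; and that each interface vertex carries at least one interface edge, so the choice above is always possible.

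With this in hand, \ref{itm:proj_bnd} and the projector property onto $\VT$ are the classical Scott--Zhang estimates: $\Pi_{\tria_*\to\tria}$ reproduces $\VT$ since $\int_{\sigma_z}\psi_z v=v(z)$ whenever $v|_{\sigma_z}\in P_1$, and $H^1(\Omega)$-stability follows from $|\phi_z|_{H^1(T)}\eqsim 1$, $\|\psi_z\|_{L^2(\sigma_z)}\eqsim|\sigma_z|^{-1/2}$, a trace inequality on the edges $\sigma_z$, and a Poincar\'e estimate over the shape-regular patch of $T$ after subtracting a constant (which $\Pi_{\tria_*\to\tria}$ reproduces). For \ref{itm:proj_compl} I would use that $\tria$ and $\tria_*$ coincide on $\overline{\Omega}_2$: for $v\in\VT[\tria_*]$ and any vertex $z$ of $\tria$ in $\overline{\Omega}_2$ the simplex $\sigma_z$ is an edge or triangle of $\tria_*$ (by the fact just recorded), so $v|_{\sigma_z}\in P_1$ and $(\Pi_{\tria_*\to\tria}v)(z)=v(z)$; hence $v$ and $\Pi_{\tria_*\to\tria}v$ are two $\tria|_{\overline{\Omega}_2}$-piecewise affine functions with the same nodal values and therefore agree on $\overline{\Omega}_2$. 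For \ref{itm:proj_V0}, if $z\in\partial\Omega$ and $v\in\VoT[\tria_*]$, then either $\sigma_z$ is a boundary edge, on which the trace of $v$ vanishes, or $z$ is an interface vertex and $\sigma_z\in\sides(\tria_*)$ with $v|_{\sigma_z}\in P_1$ and $v(z)=0$; either way $(\Pi_{\tria_*\to\tria}v)(z)=0$, so $\Pi_{\tria_*\to\tria}v\in\VoT$.

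The delicate item is \ref{itm:proj_Omega_j}. I would define $\bar\Pi_{\tria,i}$ by the same formula, but summing only over $z\in\nodes(\tria)\cap\overline{\Omega}_i$ and replacing $v$ by the trace of $v_i\in H^1(\Omega_i)$ on $\sigma_z$, which is legitimate since every such $\sigma_z$ lies in $\overline{\Omega}_i$ --- inside it when $z$ is not on the interface, on $\partial\Omega_i$ when it is. That $\bar\Pi_{\tria,i}$ is a projector onto $\VT|_{\Omega_i}$ follows exactly as above, and the compatibility $(\Pi_{\tria_*\to\tria}v)|_{\Omega_i}=\bar\Pi_{\tria,i}(v|_{\Omega_i})$ holds because on every $\sigma_z\subset\overline{\Omega}_i$ the trace of $v\in H^1(\Omega)$ agrees with that of $v|_{\Omega_i}$ across $\partial\Omega_i$. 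For the localized bound, the whole point of the placement of the $\sigma_z$ is that for $T\in\tria$ with $T\subset\overline{\Omega}_i$ and $z$ a vertex of $T$, the simplex $\sigma_z$ is contained in a triangle of $\tria$ that lies in $\overline{\Omega}_i$ and shares the vertex $z$ with $T$; thus $(\bar\Pi_{\tria,i}v_i)|_T$ only feels $v_i$ on triangles $T'\subset\overline{\Omega}_i$ with $T'\cap T\neq\emptyset$, and the Scott--Zhang localization (constant subtraction, trace inequality, Poincar\'e on that patch) then gives the stated inequality. The step I expect to be the real obstacle is precisely this last Poincar\'e estimate, since the $\overline{\Omega}_i$-patch of $T$ can degenerate when a vertex of $T$ is a ``pinch'' vertex of the area of coarsening $\Omega_1$; so the proof will first need to record the relevant structural property of areas of coarsening produced by newest vertex bisection under the matching condition, ensuring that, for every $T\subset\overline{\Omega}_i$, the union of the triangles of $\tria$ in $\overline{\Omega}_i$ meeting $T$ is a connected, uniformly shape-regular neighbourhood of $T$. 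With that established, all constants depend only on $\tria_\bot$, and the remaining estimates are routine.
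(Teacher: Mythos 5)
Your construction is the same as the paper's: a Scott--Zhang quasi-interpolator onto $\VT$ whose dual functionals are supported on simplices chosen to respect the decomposition $\overline{\Omega}=\overline{\Omega}_1\cup\overline{\Omega}_2$; the paper uses edges $S_z\ni z$ for every node, subject only to $S_z\subset\partial\Omega_i$ whenever $z\in\partial\Omega_i$, while you use triangles for non-interface interior nodes, which changes nothing essential. Your arguments for {\tt(Pr1)}, {\tt(Pr3)} and {\tt(Pr4)} are correct and match the paper's (the paper obtains {\tt(Pr3)} slightly more abstractly from $v|_{\Omega_2}\in\VT|_{\Omega_2}$ together with the projector property of $\bar{\Pi}_{\tria,2}$, rather than by comparing nodal values, and obtains part of {\tt(Pr4)} as a consequence of {\tt(Pr3)}).

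The one point where your proposal is incomplete is exactly the one you flag: the localized bound in {\tt(Pr2)}. You are right that it reduces to a Poincar\'e-type estimate on the patch $\{T'\in\tria: T'\cap T\neq\emptyset,\ T'\subset\overline{\Omega}_i\}$, and that this is delicate if that patch is ``pinched'' at a vertex $z$ of $T$, i.e.\ if the star of $z$ inside $\overline{\Omega}_i$ is not edge-connected: whichever side of the pinch $\sigma_z$ lies on, for a triangle $T$ on the other side the nodal value at $z$ sees a trace that a general $v_i\in H^1(\Omega_i)$ may choose independently of $v_i|_T$, so the stated bound cannot follow from the routine Scott--Zhang argument alone. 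But be aware of two things. First, the structural claim you propose to ``record'' --- that the $\overline{\Omega}_i$-patch of every $T$ is a connected neighbourhood of $T$ --- is not something you may take for granted: refined triangles do come in edge-matched pairs sharing their refinement edge, but nothing in Section~\ref{sec:newest-bisection} prevents two such pairs from meeting at a single vertex, so you would have to either prove this claim or find a workaround (for instance, exploiting that in every application of {\tt(Pr2)} the function $v_i$ is the restriction of a globally continuous piecewise linear function, whose continuity at the pinch vertex forces any ``jump'' to be paid inside one of the patch triangles appearing on the right-hand side). Second, the paper's own proof does not address this either: it defines $\bar{\Pi}_{\tria,i}$ via the assignment \eqref{eq:SZSz} and then simply asserts that the properties in {\tt(Pr2)} are valid, its only geometric remark being that $\Omega_1$ and $\Omega_2$ cannot touch at an isolated point (which guarantees that the assignment of the $S_z$ is possible, not that the patches are edge-connected). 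So your proposal is faithful to the paper and your diagnosis of the delicate step is accurate, but as written the proof of {\tt(Pr2)} is left unfinished.
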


\begin{proof}
  For the construction of $\Pi_{\tria_*
    \to \tria}$, we assign to each node $z \in
  \nodes(\tria)$ 
  some edge $S_z \in \sides(\tria)$ such that \new{$z \in S_z$ and}
  \begin{align}\label{eq:SZSz}
    \begin{split}
       S_z \subset \begin{cases}
       \partial
        \Omega_1, \quad&\text{if}~z\in\partial\Omega_1,
        \\
        \partial
        \Omega_2, \quad&\text{if}~z\in\partial\Omega_2.
      \end{cases}
    \end{split}
  \end{align}
   These restrictions are well posed, since $\Omega$ is a domain, which
  excludes the case that $\Omega_1$ and $\Omega_2$ touch at some
  isolated point.  We denote by $\Pi := \Pi_{\tria_* \to \tria}$ the
  Scott-Zhang projector according to the above
  assignments~\eqref{eq:SZSz}, i.e., for $z\in \nodes(\tria)$, the
  nodal value $(\Pi v)(z)$ is defined by means of $L^2(S_z)$ dual
  functions of the local nodal basis functions on $S_z$; compare with
  \cite{ScoZha90}.  Then $\Pi \colon H^1(\Omega) \to H^1(\Omega)$ is a
  projector onto $\VT$, and~{\ref{itm:proj_bnd}} follows
  from~\cite{ScoZha90}.

  Thanks to~\eqref{eq:SZSz}, we may define the Scott-Zhang projectors
  $\new{\bar{\Pi}_{\tria,i}}:H^1(\Omega_i)\to H^1(\Omega_i)$ onto $\VT|_{\Omega_i}$
  according to $S_z$, $z\in\nodes(\tria)\cap\overline{\Omega}_i$,
  $i=1,2$.  \new{With these definitions the properties listed in {\ref{itm:proj_Omega_j}} are valid.}
    
  Let $v \in \VT[\tria_*]$. Then $v|_{\Omega_2} \in \VT|_{\Omega_2}$ and
  since $\new{\bar{\Pi}_{\tria,2}}$ is a projector onto $\VT|_{\Omega_2}$, we
  have that $\new{\bar{\Pi}_{\tria,2}} v|_{\Omega_2}=v|_{\Omega_2}$, i.e.,
  $\Pi v = v$ on
  $\overline{\Omega}_2$. This proves~{\ref{itm:proj_compl}}.
  
  In order to prove~{\ref{itm:proj_V0}} let $v \in \VoT[\tria_*]$.
  Then~{\ref{itm:proj_compl}} implies $v=0$ on $\partial \Omega \cap
  \partial \Omega_2$.  Therefore, let  $z \in \nodes(\tria)$ with $z \in
  \partial \Omega \setminus
  \partial \Omega_2$. Then locally $\partial\Omega_1$ coincides with
  $\partial\Omega$, and thus $S_z \subset \partial \Omega_1 \cap
  \partial \Omega$ according to~\eqref{eq:SZSz}. Since $v=0$ on
  $\partial \Omega$, it follows from properties of the Scott-Zhang
  projector that $(\Pi v)(z)= 0$. Consequently, we have $(\Pi
  v)(z)=0$ for all $z\in\nodes(\tria)$, i.e., $\Pi v = 0$ on $\partial
  \Omega$.
\end{proof}

\begin{remark}
  Note that the projector constructed in Lemma~\ref{pro:proj} does
  not map $H^1_0(\Omega)$ into $\VoT$ when $\Omega_1$ touches the
  boundary.  In such a situation we might have $z \in \nodes(\tria) \cap
  \partial \Omega \cap \partial \Omega_1 \cap
  \partial \Omega_2$ but $\partial \Omega \cap
  \partial \Omega_1 \cap
  \partial \Omega_2$ contains no edge. Hence, in view of  \eqref{eq:SZSz}
  it is not possible to
  require additionally that $z \in \partial\Omega$ implies $S_z
  \subset \partial\Omega$. 
\end{remark}
\begin{theorem}
  \label{thm:Pi_commutes}
  Let $(\tria^\wedge,\tria_\vee;\tria_1, \dots, \tria_m)$ be a lower diamond in $\bbT$.
  Set
  $\Pi_j := \Pi_{\tria_\vee \to
    \tria_j}$, and $\Omega_j := \Omega(\tria_j \setminus \tria_\vee)$,
   $j=1,\dots,m$. 
  Then the projectors $\Pi_j$
  commute as operators from $\VT[\tria_\vee] \to \VT[\tria_\vee]$.

  Define $\Pi := \Pi_1 \circ \dots \circ \Pi_m:\VT[\tria_\vee] \to \VT[\tria_\vee]$.
  Then $\Pi$ is a projector onto $\VT[\tria^\wedge]$, and $\Pi(\VoT[\tria_\vee]) \subset \VoT[\tria^\wedge]$.
  Moreover, for all $v_\vee
  \in \VT[\tria_\vee]$ we have
  \begin{align}
    \label{eq:Pi_c1}
    \Pi v_\vee &=
    \begin{cases}
      \Pi_j v_\vee &\qquad \text{on
        $\overline{\Omega}_j$},
      \\
      v_\vee &\qquad \text{on $\overline{\Omega \setminus \cup_{j=1}^m
          \Omega_j}$}
    \end{cases}
  \end{align}
  and $\abs{\Pi v_\vee}_{H^1(\Omega)} \Cleq
  \abs{v_\vee}_{H^1(\Omega)}$, only dependent on $\tria_\bot$.
\end{theorem}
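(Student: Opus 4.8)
The plan is to realize the composition $\Pi=\Pi_1\circ\cdots\circ\Pi_m$ as a single ``glued'' operator that acts, on each area of coarsening $\Omega_j=\Omega(\tria_j\setminus\tria_\vee)$, by the local projector $\bar{\Pi}_{\tria_j,1}$ furnished by Lemma~\ref{pro:proj}\,\ref{itm:proj_Omega_j} (applied with $(\tria,\tria_*)=(\tria_j,\tria_\vee)$, for which the coarsening region $\Omega_1$ is precisely $\Omega_j$ and $\Omega_2=\Omega\setminus\overline{\Omega}_j$), and by the identity away from $\bigcup_j\overline{\Omega}_j$. Once this description is established, commutativity, idempotency, the range identification, and the stability bound all drop out.

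The only real geometric input is that the closures $\overline{\Omega}_j$ decouple: for $i\neq j$ one has $\Omega_i\subset\Omega\setminus\overline{\Omega}_j$. Indeed, $\Omega_i$ is the interior of a union of closed triangles of $\tria_\vee$ and $\overline{\Omega}_j$ is a union of closed triangles of $\tria_\vee$; by conformity of $\tria_\vee$ no triangle, edge or vertex building up $\Omega_i$ can meet $\overline{\Omega}_j$, because the underlying collections are disjoint — this is where the lower-diamond hypothesis $\Omega_i\cap\Omega_j=\emptyset$ enters. Taking closures gives $\overline{\Omega}_i\subset\overline{\Omega\setminus\overline{\Omega}_j}$, so by~\ref{itm:proj_compl} every $w\in\VT[\tria_\vee]$ satisfies $\Pi_j w=w$ on $\overline{\Omega}_i$, hence $(\Pi_j w)|_{\Omega_i}=w|_{\Omega_i}$; and the same argument shows $\Pi_j w=w$ on $\overline{\Omega\setminus\bigcup_k\overline{\Omega}_k}$.

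With this in hand I would argue as follows. Since $\tria_j\le\tria_\vee$ we have $\VT[\tria_j]\subset\VT[\tria_\vee]$, so each $\Pi_j$ restricts to a projector of $\VT[\tria_\vee]$ onto $\VT[\tria_j]$, and by~\ref{itm:proj_V0}, $\Pi_j(\VoT[\tria_\vee])\subset\VoT[\tria_j]\subset\VoT[\tria_\vee]$. A direct induction on the length of the composition, using~\ref{itm:proj_Omega_j}, \ref{itm:proj_compl}, and the decoupling above, then shows that for any $v_\vee\in\VT[\tria_\vee]$ and any ordering of the factors, $\Pi_1\circ\cdots\circ\Pi_m\,v_\vee$ equals $\bar{\Pi}_{\tria_j,1}(v_\vee|_{\Omega_j})$ on $\overline{\Omega}_j$ for each $j$ and equals $v_\vee$ on $\overline{\Omega\setminus\bigcup_j\Omega_j}$: applying any further $\Pi_k$ leaves the data on an unprocessed $\Omega_j$ untouched and does not change the already-processed pieces $\overline{\Omega}_l$ (which lie in $\overline{\Omega\setminus\overline{\Omega}_k}$), while $\bar{\Pi}_{\tria_j,1}(v_\vee|_{\Omega_j})$ depends only on $v_\vee|_{\Omega_j}$. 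This closed-form description is manifestly order-independent, which gives commutativity of the $\Pi_j$ on $\VT[\tria_\vee]$ and simultaneously proves~\eqref{eq:Pi_c1}. Idempotency of $\Pi$ follows since each $\bar{\Pi}_{\tria_j,1}$ is idempotent; by order-independence $\Pi v_\vee$ lies in the range $\VT[\tria_j]$ of $\Pi_j$ for every $j$, hence in $\bigcap_j\VT[\tria_j]=\VT[\tria^\wedge]$ by Proposition~\ref{prop:V-lattice}, while any $w\in\VT[\tria^\wedge]\subset\bigcap_j\VT[\tria_j]$ is fixed by every $\Pi_j$ and thus by $\Pi$; so $\Pi$ is a projector onto $\VT[\tria^\wedge]$, and running the same chain with $\VoT$ in place of $\VT$ and invoking~\ref{itm:proj_V0} gives $\Pi(\VoT[\tria_\vee])\subset\VoT[\tria^\wedge]$. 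Finally, from the explicit formula $\abs{\Pi v_\vee}_{H^1(\Omega)}^2=\sum_j\abs{\bar{\Pi}_{\tria_j,1}(v_\vee|_{\Omega_j})}_{H^1(\Omega_j)}^2+\abs{v_\vee}_{H^1(\Omega\setminus\bigcup_j\overline{\Omega}_j)}^2$; bounding each summand by the local estimate of~\ref{itm:proj_Omega_j}, summed over the triangles of $\tria_j$ contained in $\overline{\Omega}_j$ and using the uniform finite overlap in $\mathfrak{T}$, yields $\abs{\bar{\Pi}_{\tria_j,1}(v_\vee|_{\Omega_j})}_{H^1(\Omega_j)}^2\Cleq\abs{v_\vee}_{H^1(\Omega_j)}^2$, whence $\abs{\Pi v_\vee}_{H^1(\Omega)}\Cleq\abs{v_\vee}_{H^1(\Omega)}$ with a constant depending only on $\tria_\bot$.

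The main obstacle is the elementary but fiddly topological bookkeeping around the closures $\overline{\Omega}_j$: one must be careful that the Scott--Zhang-type projectors $\Pi_j$, which genuinely act on a closed neighbourhood of $\Omega_j$, nonetheless ``see'' only the data in $\Omega_j$, and that the pieces $\overline{\Omega}_j$ overlap at most in lower-dimensional sets on which each $\Pi_j$ acts as the identity, so that the glued function is well defined and $H^1$-conforming. Once this decoupling is pinned down, the remainder is a routine chase through properties~\ref{itm:proj_bnd}--\ref{itm:proj_V0} and Proposition~\ref{prop:V-lattice}.
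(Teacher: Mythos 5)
Your proposal is correct and uses exactly the same ingredients as the paper's proof: the decoupling $\overline{\Omega}_i\subset\overline{\Omega\setminus\overline{\Omega}_j}$ from disjointness of the coarsening areas, locality \ref{itm:proj_Omega_j}, the identity property \ref{itm:proj_compl} off $\overline{\Omega}_j$, and \ref{itm:proj_V0} for the $\VoT$ statement. The only (harmless) difference is organizational — the paper establishes pairwise commutativity first on the three regions and then deduces \eqref{eq:Pi_c1}, whereas you derive the glued closed-form description directly and read off commutativity as order-independence.
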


\begin{proof}
  Thanks to Lemma~\ref{pro:proj} we have that $\Pi_j$ is a projector onto $\VT[\tria_j]$, and $\Pi_j(\VoT[\tria_\vee]) \subset
  \VoT[\tria_j]$.
  We fix $i\neq j$.   Since $\Omega_i$ and $\Omega_j$ are disjoint, we have
  $\overline{\Omega}_i \subset \overline{\Omega \setminus
    \overline{\Omega}_j}$. Hence, we conclude
  from~{\ref{itm:proj_compl}} that $\Pi_j v_\vee=v_\vee$ on
  $\overline{\Omega}_i$ and $\Pi_i v_\vee = \Pi_j\Pi_i v_\vee$ on
  $\overline{\Omega}_i$. 
  Since~{\ref{itm:proj_Omega_j}} implies that $(\Pi_i w)|_{\Omega_i}$ only depends on $w|_{\Omega_i}$, we conclude that 
  $\Pi_i \Pi_j v_\vee = \Pi_i v_\vee$ on $\overline{\Omega}_i$, and
  thus $\Pi_i \Pi_j v_\vee = \Pi_i v_\vee = \Pi_j \Pi_i
  v_\vee$ on $\overline{\Omega}_i$.  Analogously, we have 
  $\Pi_i \Pi_j v_\vee = \Pi_j v_\vee = \Pi_j \Pi_i v_\vee$ on
  $\overline{\Omega}_j$. Moreover, thanks to~{\ref{itm:proj_compl}}, we have
  $\Pi_i \Pi_j v_\vee = v_\vee = \Pi_j \Pi_i v_\vee$ on
  $\overline{\Omega \setminus (\Omega_i \cup \Omega_j)}$ and thus
  $\Pi_i \Pi_j v_\vee = \Pi_j \Pi_i v_\vee$. 

  Since the $\Pi_j$ commute, we conclude that $\Pi$ is a projector onto
  $\bigcap_{j=1}^m \VT[\tria_j] = \VT[\tria^\wedge]$; compare also with
  Proposition~\ref{prop:V-lattice}.  The claim $\Pi(
    \VoT[\tria_\vee]) \subset \VoT[\tria^\wedge]$ follows analogously
  using~{\ref{itm:proj_V0}}.
  Again since the $\Pi_j$ commute  we infer~{\eqref{eq:Pi_c1}} from
  {\ref{itm:proj_Omega_j}}. 

  Thanks to~\eqref{eq:Pi_c1} and~{\ref{itm:proj_Omega_j}} we
  conclude
  \begin{align*} 
    \abs{\Pi v_\vee}^2_{H^1(\Omega)} &= \abs{
      v_\vee}^2_{H^1(\Omega\setminus\bigcup_{j=1}^m\Omega_j)} +
    \sum_{j=1}^m \abs{\Pi_j v_\vee}^2_{H^1(\Omega_j)}
    \\
    &\Cleq \abs{
      v_\vee}^2_{H^1(\Omega\setminus\bigcup_{j=1}^m\Omega_j)} +
    \sum_{j=1}^m \abs{ v_\vee}^2_{H^1(\Omega_j)} =
    \abs{v_\vee}^2_{H^1(\Omega)},
  \end{align*}
  with constants independent of $m$.
\end{proof}

With the projectors $\Pi_j$ and $\Pi$ at hand, we are now ready to prove the main result of this section.
\begin{proof}[Proof of Theorem~\ref{thm:stable-splitting}]
Thanks to Lemma~\ref{pro:proj}, $\Pi, \Pi_j:\VoT[\tria_\vee]\to\VoT[\tria_\vee]$ are 
  (uniformly) bounded projectors onto $\VoT[\tria^\wedge]$ or
  $\VoT[\tria_j]$. From this, together with  Lemma~\ref{lemma:Lebesgue}, \eqref{eq:Pi_c1}, and
  {\ref{itm:proj_compl}}, we infer that
  \begin{align*}
    |u_{\tria_\vee}-u_{\tria^\wedge}|_{H^1(\Omega)}^2 & \eqsim
    |u_{\tria_\vee}-\Pi u_{\tria_\vee}|_{H^1(\Omega)}^2= \sum_{j=1}^m
    |u_{\tria_\vee}-\Pi_j u_{\tria_\vee}|_{H^1(\Omega_j)}^2
    \\
    &=\sum_{j=1}^m |u_{\tria_\vee}-\Pi_j
    u_{\tria_\vee}|_{H^1(\Omega)}^2 \eqsim \sum_{j=1}^m
    |u_{\tria_\vee}-u_{\tria_j}|_{H^1(\Omega)}^2.\qedhere
  \end{align*}
\end{proof}

Thanks to \eqref{eq:Ediff}, the latter result directly transfers to 
energy differences.  In fact, under the conditions of
 Theorem~\ref{thm:stable-splitting}, we have 
 \begin{align*}
   \mathcal{J}(\tria^\wedge)- \mathcal{J}(\tria_\vee) &\eqsim
   \sum_{j=1}^m \big( \mathcal{J}(\tria_j)- \mathcal{J}(\tria_\vee)
   \big).
 \end{align*}
This estimate is fundamental for our optimality analysis in
Section~\ref{sec:simpl-optim}. We make the following definition:
\begin{definition}
  \label{def:LDE}
  An energy~$\widetilde{\mathcal{J}}\,:\, \bbT \to \setR$ is said to
  satisfy the {\em lower diamond estimate} when for all lower diamonds
  $(\tria^\wedge,\tria_\vee;\tria_1, \dots, \tria_m)$ in $\bbT$, it holds that 
  \begin{align*}
    \widetilde{\mathcal{J}}(\tria^\wedge)-
    \widetilde{\mathcal{J}}(\tria_\vee) &\eqsim \sum_{j=1}^m \big(
    \widetilde{\mathcal{J}}(\tria_j)-
    \widetilde{\mathcal{J}}(\tria_\vee) \big),
  \end{align*}
  independent of the lower diamond.
\end{definition}
\begin{corollary}
  \label{Cor:triaLD}
  The energy $\mathcal{J}$ satisfies the lower diamond estimate.
\end{corollary}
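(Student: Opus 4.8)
The plan is to reduce the statement to Theorem~\ref{thm:stable-splitting} by means of the energy identity \eqref{eq:Ediff}. First I would fix an arbitrary lower diamond $(\tria^\wedge,\tria_\vee;\tria_1,\dots,\tria_m)$ in $\bbT$ and note that, by the lattice structure of $(\bbT,\le)$, one has $\tria^\wedge = \bigwedge_{j=1}^m \tria_j \le \tria_j \le \bigvee_{j=1}^m \tria_j = \tria_\vee$ for every $j=1,\dots,m$. Hence \eqref{eq:Ediff} is applicable to each of the pairs $\tria^\wedge \le \tria_\vee$ and $\tria_j \le \tria_\vee$, yielding
\begin{align*}
  \mathcal{J}(\tria^\wedge) - \mathcal{J}(\tria_\vee) &= \tfrac12 \abs{u_{\tria_\vee}-u_{\tria^\wedge}}_{H^1(\Omega)}^2, &
  \mathcal{J}(\tria_j) - \mathcal{J}(\tria_\vee) &= \tfrac12 \abs{u_{\tria_\vee}-u_{\tria_j}}_{H^1(\Omega)}^2 .
\end{align*}

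Then I would simply invoke Theorem~\ref{thm:stable-splitting}, which asserts $\abs{u_{\tria_\vee}-u_{\tria^\wedge}}_{H^1(\Omega)}^2 \eqsim \sum_{j=1}^m \abs{u_{\tria_\vee}-u_{\tria_j}}_{H^1(\Omega)}^2$ with equivalence constants depending only on $\tria_\bot$. Multiplying through by $\tfrac12$ and substituting the two displayed identities gives $\mathcal{J}(\tria^\wedge) - \mathcal{J}(\tria_\vee) \eqsim \sum_{j=1}^m \big(\mathcal{J}(\tria_j) - \mathcal{J}(\tria_\vee)\big)$ with the same constants, which is exactly the lower diamond estimate of Definition~\ref{def:LDE} for $\widetilde{\mathcal{J}}=\mathcal{J}$.

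There is essentially no obstacle here: the entire substance of the corollary is already contained in Theorem~\ref{thm:stable-splitting}, and the corollary is merely the translation of that statement from $H^1$-seminorms of differences of Galerkin solutions into differences of Dirichlet energies. The only point worth emphasizing in the write-up is that the equivalence constants in Theorem~\ref{thm:stable-splitting} are uniform over all lower diamonds — in particular independent of the size $m$ — since this uniformity is precisely what the phrase ``$\mathcal{J}$ satisfies the lower diamond estimate'' requires; but that uniformity is part of the conclusion of Theorem~\ref{thm:stable-splitting} (established via the $m$-independent bound in Theorem~\ref{thm:Pi_commutes}), so nothing further needs to be done.
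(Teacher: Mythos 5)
Your argument is correct and is precisely the paper's argument: the remark directly preceding Definition~\ref{def:LDE} already observes that, via the energy identity \eqref{eq:Ediff}, Theorem~\ref{thm:stable-splitting} transfers verbatim to $\mathcal{J}$-differences, and the corollary is then stated without further proof. Your added emphasis on the $m$-independence of the constants (inherited from Theorem~\ref{thm:Pi_commutes}) is a useful clarification but not a departure.
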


\section{A posteriori error estimation}
\label{sec:error-bounds}

In this section we shall present a\new{n} edge-based variant of the standard
residual error estimator and recall some of its properties. To this end, we
fix some triangulation $\tria\in\bbT$ of $\Omega$.  For $S \in
\sides(\tria)$ we define $\Omega_\tria(S)$ as the interior of the
union of the triangles with common edge $S$, and we define the
\new{squared} local error indicators  by
\begin{align}\label{df:estimator} 
  \estT^2(S)&:= 
  \begin{cases}
    \displaystyle{ \sum_{{\{T \in \tria: T \subset
          \overline{\Omega_\tria(S)}\}}} h_T^2 \norm{ f}_{L^2(T)}^2 + h_S
      \bignorm{\jump{\nabla u_\tria}}_{L^2(S)}^2} &\quad \text{for $S
      \subset \Omega$}
    \\[2mm]
    \displaystyle{
          \sum_{{\{T \in \tria: T \subset \overline{\Omega_\tria(S)}\}}} h_T^2
      \norm{ f}_{L^2(T)}^2} &\quad \text{for $S \subset
      \partial \Omega$}.
  \end{cases}
\end{align}
Here $h_S := \abs{S}$,  $h_T := \abs{T}^{\frac 12}$, and
\begin{align*} 
  \jump{\nabla\uT}|_{S} &:=\sum_{\{T \in \tria: S \subset T\}}
  \nabla{\uT}|_{T} \cdot \bsn_{T}
\end{align*}
with $\bsn_{T}$ being the outward pointing unit normal on $\partial T$.
Note that, thanks to  the choice $h_T = \abs{T}^{\frac 12}$, we have
that the local mesh size $h_T$ decreases strictly with the factor
$2^{-1/2}$ at every bisection of $T$.

For $\tilde\sides\subset\sides(\tria)$ we define the accumulated
\new{squared} error \new{indicator} by 
\begin{align}
  \estT^2(\tilde\sides)&:=
  \sum_{S\in\tilde\sides}\estT^2(S).
\end{align}
For $\tria \in \bbT$ we define the \new{squared}
oscillation $\osc^2(\mathcal{U})$ on  \new{$\mathcal{U} \subset \tria$} by
\begin{align*}
  \osc^2(\mathcal{U}) &:= \sum_{T \in \mathcal{U}} h_T^2
  \norm{f-f_T}_{L^2(T)}^2,
\end{align*}
where $f_T := \frac{1}{\abs{T}} \int_T f\,dx$. 
It is well known that the estimator, defined in
\eqref{df:estimator}, is reliable and efficient in the following sense; compare e.g. with \cite{Verfuerth:96}.
\begin{proposition}\label{P:Bounds}
  For $\tria \in \bbT$ we have the bounds
  \begin{align*}
    \abs{u-\uT}_{H^1(\Omega)}^2& \Cleq\estT^2(\sides(\tria)) \Cleq
    \abs{u-\uT}_{H^1(\Omega)}^2+\osc^2(\tria).
  \end{align*}
\end{proposition}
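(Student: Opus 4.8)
The plan is to prove the two inequalities of Proposition~\ref{P:Bounds} separately by localising the standard residual a posteriori analysis to individual edges, which is possible because $\estT^2$ is organised by edges and the triangulations are uniformly shape regular.

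\emph{Upper bound (reliability).} First I would take the Galerkin orthogonality $\int_\Omega \nabla(u-\uT)\cdot\nabla \vT\,\dx = 0$ for all $\vT \in \VoT$, and test the residual against $v := u - \uT \in H^1_0(\Omega)$ minus its Scott--Zhang (or Cl\'ement) quasi-interpolant $I_\tria v \in \VoT$. Integrating by parts elementwise turns $\int_\Omega \nabla(u-\uT)\cdot\nabla(v-I_\tria v)\,\dx$ into a sum of element terms $\int_T f\,(v - I_\tria v)\,\dx$ and edge terms $\int_S \jump{\nabla\uT}(v - I_\tria v)\,\ds$, using that $-\Delta \uT = 0$ on each $T$ since $\uT$ is piecewise affine. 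The local approximation estimates for the quasi-interpolant, $\norm{v - I_\tria v}_{L^2(T)} \Cleq h_T \abs{v}_{H^1(\omega_T)}$ and $\norm{v - I_\tria v}_{L^2(S)} \Cleq h_S^{1/2} \abs{v}_{H^1(\omega_S)}$ on the local patches, together with Cauchy--Schwarz and finite overlap of the patches (bounded by shape regularity), yield $\abs{u-\uT}_{H^1(\Omega)}^2 \Cleq \estT^2(\sides(\tria))$. I would note $h_T = \abs{T}^{1/2} \eqsim \operatorname{diam}(T)$ by uniform shape regularity, so the definition \eqref{df:estimator} matches the usual $\operatorname{diam}$-scaled indicators up to constants depending only on $\tria_\bot$.

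\emph{Lower bound (efficiency).} Here I would use the classical bubble function technique of Verf\"urth. For each $T$, testing the residual equation with an interior bubble $b_T$ times the elementwise $L^2$-projection $f_T$ of $f$ gives $h_T^2\norm{f_T}_{L^2(T)}^2 \Cleq \abs{u-\uT}_{H^1(T)}^2 + h_T^2\norm{f-f_T}_{L^2(T)}^2$, and then $h_T^2\norm{f}_{L^2(T)}^2 \Cleq h_T^2\norm{f_T}_{L^2(T)}^2 + h_T^2\norm{f-f_T}_{L^2(T)}^2$ absorbs the remaining piece into oscillation. For each interior edge $S$, testing with an edge bubble $b_S$ times (a polynomial extension of) $\jump{\nabla\uT}|_S$ over $\Omega_\tria(S)$ gives $h_S\bignorm{\jump{\nabla\uT}}_{L^2(S)}^2 \Cleq \abs{u-\uT}_{H^1(\Omega_\tria(S))}^2 + \sum_{T \subset \overline{\Omega_\tria(S)}} h_T^2\norm{f}_{L^2(T)}^2$, the last term being controlled by the already-established element estimate. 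Summing over all $S \in \sides(\tria)$, again with finite overlap, produces $\estT^2(\sides(\tria)) \Cleq \abs{u-\uT}_{H^1(\Omega)}^2 + \osc^2(\tria)$.

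The main obstacle is not conceptual but bookkeeping: one must keep all multiplicative constants depending only on the shape-regularity constant of $\mathfrak{T}$ (hence only on $\tria_\bot$), carefully handle boundary edges $S \subset \partial\Omega$ where the jump term is absent but the condition $v=0$ on $\partial\Omega$ compensates, and verify that the nonstandard normalisation $h_T = \abs{T}^{1/2}$, $h_S = \abs{S}$ used in \eqref{df:estimator} is equivalent to the conventional diameter-based scaling so that the textbook estimates apply verbatim. Since all of this is standard (cf. \cite{Verfuerth:96}), I would simply cite the reference for the detailed computations and only indicate the localisation-by-edges as the one point where our edge-organised estimator differs cosmetically from the usual element-organised one.
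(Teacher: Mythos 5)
Your proposal is correct and coincides with what the paper does: the paper offers no proof of Proposition~\ref{P:Bounds} at all, merely citing \cite{Verfuerth:96}, and your sketch (Galerkin orthogonality plus Scott--Zhang/Cl\'ement interpolation for reliability, interior and edge bubble functions for efficiency, together with the observations that $h_T=\abs{T}^{1/2}\eqsim \operatorname{diam}(T)$ by uniform shape regularity and that the edge-organised estimator merely triple-counts the element terms) is precisely the standard argument behind that citation.
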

This proposition shows that the  error estimator mimics the error
$\abs{u-\uT}_{H^1(\Omega)}$ up to oscillation.  

We define the \emph{total energy} $\mathcal{G}\,:\, \bbT \to \setR$ 
by
\begin{align}
  \label{eq:def_H}
  \mathcal{G}(\tria) :=\mathcal{J}(\tria) + \mathcal{H}(\tria),\quad\text{where}\quad
  \mathcal{H}(\tria) &:= \sum_{T \in \tria} h_T^2 \norm{f}_{L^2(T)}^2.
\end{align}
Note that $\mathcal{G}$, $\mathcal{H}$ and $\osc^2$ are non-increasing
with respect to $\bbT$. Since $\mathcal{H}(\tria)$, $\osc^2(\tria) \to
0$ for $\tria \to \tria^\top$, it is natural to set
$\mathcal{H}(\tria^\top) :=0$, $\osc^2(\tria^\top) := 0$, and 
$\mathcal{G}(\tria^\top) := \mathcal{J}(\tria^\top)$.

From Proposition~\ref{P:Bounds} and $\estT^2(\sides(\tria)) \geq
\mathcal{H}(\tria) \geq \osc^2(\tria)$, we obtain
\begin{align}
  \label{eq:energy_total_error}
  \estT^2(\sides(\tria)) &\eqsim \abs{u-\uT}_{H^1(\Omega)}^2
  +\osc^2(\tria)\eqsim \abs{u-\uT}_{H^1(\Omega)}^2
  +\mathcal{H}(\tria).
\end{align}
The term $(\abs{u-\uT}_{H^1(\Omega)}^2 +\osc^2(\tria))^{1/2}$ is referred to
as the {\em total error}. Similarly, we have in terms of an
energy difference, that 
\begin{align}
  \label{eq:energy+est+osc} \hspace{-8mm}
  \estT^2(\sides(\tria)) &\eqsim (\mathcal{J} +
  \osc^2)(\tria) - (\mathcal{J} +
  \osc^2)(\tria^\top) \eqsim \mathcal{G}(\tria) -
  \mathcal{G}(\tria^\top) \hspace{-10mm}
\end{align}
Therefore, in order to prove instance optimality for the total error, it
suffices to prove instance optimality of the energy difference of
the total  energy~$\mathcal{G}$.

In order two compare the energies of two discrete solutions, we need
the following lemma.
\begin{lemma}
  \label{lem:H_osc_discr}
  Let $\tria, \tria_* \in \bbT$ with $\tria \leq \tria_*$, then
  \begin{alignat*}{2}
    \mathcal{H}(\tria) - \mathcal{H}(\tria_*)& \eqsim
    \mathcal{H}(\tria \setminus \tria_*) &&:=\sum_{T \in
      \tria \setminus \tria_*} h_T^2 \norm{f}_{L^2(T)}^2.
    \\
    \osc^2(\tria) - \osc^2(\tria_*)& \eqsim \osc^2(\tria \setminus
    \tria_*) &&= \sum_{T \in \tria \setminus \tria_*} h_T^2
    \norm{f-f_T}_{L^2(T)}^2.
  \end{alignat*}
\end{lemma}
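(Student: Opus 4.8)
The plan is to prove both equivalences by the same strategy: each side is a sum of the local quantities $h_T^2 \norm{f}_{L^2(T)}^2$ (respectively $h_T^2 \norm{f-f_T}_{L^2(T)}^2$), so the task reduces to comparing, for each $T \in \tria \setminus \tria_*$, its own contribution against the sum of the contributions of its descendants in $\tria_*$. First I would observe that $\tria \setminus \tria_*$ and $\tria_* \setminus \tria$ partition $\Omega(\tria \setminus \tria_*)$ into the coarse and fine triangles respectively, and that the map sending $T \in \tria \setminus \tria_*$ to the set of its children in $\tria_*$ organises $\tria_* \setminus \tria$ as a disjoint union over $T \in \tria \setminus \tria_*$. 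Hence
\begin{align*}
  \mathcal{H}(\tria) - \mathcal{H}(\tria_*)
  &= \sum_{T \in \tria \setminus \tria_*} h_T^2 \norm{f}_{L^2(T)}^2
    - \sum_{T' \in \tria_* \setminus \tria} h_{T'}^2 \norm{f}_{L^2(T')}^2,
\end{align*}
and the analogous identity for $\osc^2$, using additivity of the integrals over the partition of each coarse $T$ into its descendants.

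For the $\mathcal{H}$-estimate the upper bound $\mathcal{H}(\tria) - \mathcal{H}(\tria_*) \le \mathcal{H}(\tria \setminus \tria_*)$ is immediate since the subtracted sum is nonnegative. For the reverse bound I would use that, for a single $T \in \tria \setminus \tria_*$ with descendants $\{T_i'\} \subset \tria_*$, each $h_{T_i'}^2 = |T_i'| \le \tfrac12 |T| = \tfrac12 h_T^2$ because every descendant is obtained by at least one bisection and $h_T^2 = |T|$ halves at each bisection (as remarked after \eqref{df:estimator}); summing $\sum_i h_{T_i'}^2 \norm{f}_{L^2(T_i')}^2 \le \tfrac12 h_T^2 \norm{f}_{L^2(T)}^2$ gives $h_T^2\norm{f}_{L^2(T)}^2 - \sum_i h_{T_i'}^2\norm{f}_{L^2(T_i')}^2 \ge \tfrac12 h_T^2 \norm{f}_{L^2(T)}^2$, and summing over $T \in \tria \setminus \tria_*$ yields $\mathcal{H}(\tria) - \mathcal{H}(\tria_*) \ge \tfrac12 \mathcal{H}(\tria \setminus \tria_*)$.

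For the $\osc^2$-estimate the same factor-$\tfrac12$ argument handles the fine-mesh sum, but the subtlety is that $\osc^2(\tria) - \osc^2(\tria_*)$ is \emph{not} simply $\osc^2(\tria \setminus \tria_*)$ minus a nonnegative term, because $f_T$ changes under refinement; one has the Galerkin-type orthogonality $\norm{f - f_{T_i'}}_{L^2(T_i')}^2 = \norm{f - f_T}_{L^2(T_i')}^2 - \norm{f_{T_i'} - f_T}_{L^2(T_i')}^2 \le \norm{f - f_T}_{L^2(T_i')}^2$ on each descendant. This already gives, after summing over descendants and using $\sum_i \norm{f-f_T}_{L^2(T_i')}^2 = \norm{f-f_T}_{L^2(T)}^2$, that $h_T^2 \norm{f-f_T}_{L^2(T)}^2 - \sum_i h_{T_i'}^2 \norm{f-f_{T_i'}}_{L^2(T_i')}^2 \le h_T^2 \norm{f-f_T}_{L^2(T)}^2$ (the upper bound, since the descendant terms are pointwise $\ge 0$ so they can only decrease things — giving $\osc^2(\tria)-\osc^2(\tria_*) \le \osc^2(\tria\setminus\tria_*)$), and for the lower bound one bounds $\sum_i h_{T_i'}^2 \norm{f-f_{T_i'}}_{L^2(T_i')}^2 \le \tfrac12 h_T^2 \sum_i \norm{f - f_{T_i'}}_{L^2(T_i')}^2 \le \tfrac12 h_T^2 \norm{f - f_T}_{L^2(T)}^2$, again via $h_{T_i'}^2 \le \tfrac12 h_T^2$ and the orthogonality; subtracting and summing over $T$ gives $\osc^2(\tria) - \osc^2(\tria_*) \ge \tfrac12 \osc^2(\tria \setminus \tria_*)$. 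I expect the main (mild) obstacle to be bookkeeping the descendant/child partition cleanly and making sure the $L^2$-orthogonality of the piecewise-constant projection is invoked correctly so that the oscillation difference telescopes over generations; everything else is the elementary halving of $h_T^2$ under bisection.
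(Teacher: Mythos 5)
Your argument is correct and coincides with the paper's own proof: both rest on the identity $\mathcal{H}(\tria)-\mathcal{H}(\tria_*)=\mathcal{H}(\tria\setminus\tria_*)-\mathcal{H}(\tria_*\setminus\tria)$, the halving of $h_T^2=|T|$ under bisection to get $\mathcal{H}(\tria_*\setminus\tria)\le\tfrac12\mathcal{H}(\tria\setminus\tria_*)$, and the best-approximation property $\inf_{c}\norm{f-c}_{L^2(T)}=\norm{f-f_T}_{L^2(T)}$ for the oscillation term. You have merely written out in detail the bookkeeping that the paper leaves implicit.
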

\begin{proof}
  Since every bisection locally reduces the mesh size by a factor of
  $2^{-1/2}$, we have \new{$\mathcal{H}(\tria_* \setminus \tria) \leq \frac{1}{2}
   \mathcal{H}(\tria \setminus \tria_*)$}. This and
  $\mathcal{H}(\tria) - \mathcal{H}(\tria_*) = \mathcal{H}(\tria
  \setminus \tria_*)- \mathcal{H}(\tria_* \setminus \tria)$ proves the
  claim for~$\mathcal{H}$. The proof of the second claim is similar using also 
  $\inf_{c \in \setR} \norm{f-c}_{L^2(T)} =
  \norm{f-f_T}_{L^2(T)}$.
\end{proof}

We shall now derive a discrete analogue of
Proposition~\ref{P:Bounds}. To this end, we need the 
Scott-Zhang type interpolation $\Pi_{\new{\tria_*\to \tria}}$ introduced in 
Section~\ref{sec:projection}.

\begin{lemma}\label{L:discreteBounds}
  Let $\tria,\tria_*\in\bbT$, with $\tria_*\ge\tria$ and denote by 
  $\sides=\sides(\tria)$ and $\sides_*=\sides(\tria_*)$ the respective
  sets of sides. 
  Then we have 
  \begin{align*}
    \abs{\uT-\uT[\tria_*]}^2_{H^1(\Omega)}&\Cleq\estT^2(\sides
    \setminus \sides_*) \Cleq
    \abs{\uT-\uT[\tria_*]}^2_{H^1(\Omega)}+\new{\mathcal{H}(\tria \setminus \tria_*)},
  \end{align*}
  where $\sides\setminus \sides_*$ is the set of sides in ${\sides}$
  that are refined in ${\sides_*}$.
\end{lemma}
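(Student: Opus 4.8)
The plan is to transfer the classical reliability/efficiency proof of Proposition~\ref{P:Bounds} to the discrete setting, replacing the exact solution $u$ by the finer Galerkin solution $\uT[\tria_*]$ and confining all localisations to the area of coarsening $\Omega_1:=\Omega(\tria\setminus\tria_*)$. The lower bound will use the Scott--Zhang type projector $\Pi:=\Pi_{\tria_*\to\tria}$ of Lemma~\ref{pro:proj}, and the upper bound will use bubble functions living in $\VoT[\tria_*]$; the two inequalities are proved separately. Throughout I use that every $T\in\tria\setminus\tria_*$ has its refinement edge in $\sides\setminus\sides_*$, and that conversely both triangles adjacent to an $S\in\sides\setminus\sides_*$ belong to $\tria\setminus\tria_*$ (a new vertex on an edge of $T$ forces $T$ to be subdivided).

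For the lower bound, put $e:=\uT-\uT[\tria_*]\in\VoT[\tria_*]$. Subtracting the Galerkin identities~\eqref{eq:discrete} on $\tria$ and $\tria_*$, both tested with $e$, gives $\abs e_{H^1(\Omega)}^2=\int_\Omega\nabla\uT\cdot\nabla e-\int_\Omega fe$; inserting $v:=\Pi e\in\VoT$ (property~\ref{itm:proj_V0}) and using Galerkin orthogonality on $\tria$ once more yields $\abs e_{H^1(\Omega)}^2=\int_\Omega\nabla\uT\cdot\nabla(e-v)-\int_\Omega f(e-v)$. By~\ref{itm:proj_compl} the integrand is supported in $\overline\Omega_1$, so only triangles of $\tria\setminus\tria_*$ enter; integrating by parts elementwise and using $-\Delta\uT=0$ per triangle turns the right-hand side into $\sum_S\int_S\jump{\nabla\uT}(e-v)-\sum_{T\in\tria\setminus\tria_*}\int_Tf(e-v)$, where $S$ ranges over the edges of $\tria$ lying in the interior of $\Omega_1$ (those on $\partial\Omega_1$ drop out since $e-v=0$ on $\overline\Omega_2$). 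The volume terms are bounded by $h_T\norm f_{L^2(T)}\abs e_{H^1(\omega_T)}$ (local stability and approximation behind~\ref{itm:proj_Omega_j}), and because each $T\in\tria\setminus\tria_*$ contributes its refinement edge to $\sides\setminus\sides_*$, they sum to at most $\estT(\sides\setminus\sides_*)\,\abs e_{H^1(\Omega)}$. For $S\in\sides\setminus\sides_*$ the trace estimate $\norm{e-v}_{L^2(S)}\Cleq h_S^{1/2}\abs e_{H^1(\omega_S)}$ gives $\int_S\jump{\nabla\uT}(e-v)\Cleq\estT(S)\abs e_{H^1(\omega_S)}$, and summing (finite overlap) gives $\Cleq\estT(\sides\setminus\sides_*)\,\abs e_{H^1(\Omega)}$. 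Cancelling a factor $\abs e_{H^1(\Omega)}$ then yields the lower bound.

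The delicate point — and what I expect to be the main obstacle — concerns the remaining edges in the sum: edges $S$ of $\tria$ lying in the interior of $\Omega_1$ that are \emph{not} bisected in $\tria_*$ (both adjacent triangles are refined, but $S$ itself survives as an edge of $\tria_*$). These are invisible to $\estT(\sides\setminus\sides_*)$, so their contributions must be made to vanish. This is possible because $\jump{\nabla\uT}|_S$ is a \emph{constant} along every edge $S\in\sides(\tria)$, so $\int_S\jump{\nabla\uT}(e-v)=\jump{\nabla\uT}|_S\int_S(e-v)$; hence it suffices that $\int_S(e-v)=0$ on each such edge, which is arranged by choosing, in the construction of $\Pi_{\tria_*\to\tria}$ in Lemma~\ref{pro:proj}, the Scott--Zhang dual edges $S_z$ along these non-bisected interior edges whenever possible, so that $e-v$ becomes $L^2(S)$-orthogonal to affine functions there. (Edges meeting $\partial\Omega_1$ cause no trouble, since $e-v$ vanishes on $\overline\Omega_2$.) Making this choice consistent will lean on the combinatorics of NVB refinement edges and the matching condition~\eqref{ass:matching}.

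For the upper bound, the volume part is immediate: every triangle occurring in a patch $\Omega_\tria(S)$ with $S\in\sides\setminus\sides_*$ belongs to $\tria\setminus\tria_*$, so $\sum_{S\in\sides\setminus\sides_*}\sum_{T\subset\overline{\Omega_\tria(S)}}h_T^2\norm f_{L^2(T)}^2\Cleq\mathcal H(\tria\setminus\tria_*)$. For the jump part fix $S\in\sides\setminus\sides_*$; since $S$ is bisected, its midpoint is a node of $\tria_*$, so there is a nodal basis function $\phi\in\VoT[\tria_*]$ supported in $\Omega_\tria(S)$ and non-trivial on $S$. Testing~\eqref{eq:discrete} on $\tria_*$ with $w:=\jump{\nabla\uT}|_S\,\phi$, integrating by parts (again $-\Delta\uT=0$, and jumps across $\tria_*$-edges interior to $\tria$-triangles vanish), and bounding $\abs w_{H^1(\Omega)}$ and $\norm w_{L^2(\Omega)}$ by $|\jump{\nabla\uT}|_S|$ resp.\ $h_S|\jump{\nabla\uT}|_S|$ gives the standard local efficiency estimate $h_S\norm{\jump{\nabla\uT}}_{L^2(S)}^2\Cleq\abs{\uT-\uT[\tria_*]}_{H^1(\Omega_\tria(S))}^2+\sum_{T\subset\overline{\Omega_\tria(S)}}h_T^2\norm f_{L^2(T)}^2$. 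Summing over $S\in\sides\setminus\sides_*$ and using finite overlap of the patches completes the proof.
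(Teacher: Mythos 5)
Your efficiency bound (the second inequality) is correct and is essentially the paper's own argument: the paper tests \eqref{eq:discrete} on $\tria_*$ with $h_S\jump{\nabla\uT}\phi_z$ for the nodal basis function $\phi_z\in\VoT[\refine{\tria}{S}]\subset\VoT[\tria_*]$ at the midpoint of the bisected edge $S$, which is your bubble up to normalisation. Likewise, the skeleton of your reliability bound — Galerkin orthogonality, insertion of $\Pi_{\tria_*\to\tria}e$, localisation to $\overline{\Omega(\tria\setminus\tria_*)}$ via \ref{itm:proj_compl}, and the estimates for the volume terms and for the jumps over edges of $\sides\setminus\sides_*$ — matches the paper's proof.

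The gap sits exactly where you say the delicate point is, and your proposed repair does not close it. To obtain $\int_S(e-\Pi e)\,\ds=0$ on a surviving edge $S$ interior to $\Omega_1$ from a Scott--Zhang construction, $(\Pi e)|_S$ must be the $L^2(S)$-projection of $e|_S$ onto affine functions, and this requires \emph{both} endpoints $z,z'$ of $S$ to take $S_z=S_{z'}=S$; assigning $S$ to only one endpoint leaves $(\Pi e)(z')$ determined by averaging over a different edge, so $\int_S(e-\Pi e)$ need not vanish even though $e|_S$ is affine. But each vertex receives exactly one dual edge, whereas an interior vertex $z_0$ of $\Omega_1$ can be an endpoint of several surviving edges at once: if $z_0$ is the newest vertex of all triangles containing it (the generic situation for a vertex created by an earlier bisection) and each of these triangles is bisected once through its refinement edge, then every triangle at $z_0$ is refined while every edge emanating from $z_0$ survives. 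In addition, for vertices on $\partial\Omega_1\cup\partial\Omega_2$ the choice of $S_z$ is already fixed by \eqref{eq:SZSz}, which is what yields \ref{itm:proj_Omega_j}--\ref{itm:proj_V0}. Hence the required assignment does not exist in general, and the terms $\jump{\nabla\uT}|_S\int_S(e-\Pi e)\,\ds$ over surviving interior edges of the refined region — whose squared jumps are \emph{not} contained in $\estT^2(\sides\setminus\sides_*)$ — remain unestimated. (Replacing $\Pi$ by the Lagrange interpolant would annihilate exactly these terms, since $e\in\VoT[\tria_*]$ is affine on every $S\in\sides\cap\sides_*$, but that operator is not uniformly $H^1$-stable on $\VoT[\tria_*]$ when $\tria_*$ is much finer than $\tria$, so it cannot simply be substituted in the first step.) To be fair, the paper's own proof disposes of this sum with a one-line appeal to the trace theorem, Cauchy--Schwarz and \ref{itm:proj_Omega_j} after localising to $\overline{\Omega}_1$, without singling out the surviving edges; you have correctly located where the real work lies, but as submitted your argument for the first inequality is incomplete at precisely that step.
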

\begin{proof}
  Let $e_*:=\uT[\tria_*]-\uT$, then by \new{Lemma~\ref{pro:proj}{\ref{itm:proj_V0}}}
we have that
  \begin{align*}
    \abs{e_*}_{H^1(\Omega)}^2&=\int_\Omega \nabla e_*\cdot\nabla e_*\,\dx  
    =\int_\Omega \nabla e_*\cdot(\nabla e_*- \nabla
    \Pi_{\tria_* \to \tria} e_*)\,\dx  
    \\
    &=\sum_{T\in \tria}\int_Tf(e_*-\Pi_{\tria_* \to \tria} e_*)\,\dx
    - \sum_{S\in\sides}\int_S
    \jump{\nabla \uT} (e_*-\Pi_{\tria_* \to \tria} e_*)\,\ds.
  \end{align*}
  It follows from Lemma~{\ref{pro:proj}\ref{itm:proj_compl}} that $e_* = \Pi_{\tria_* \to
    \tria} e_{\new{*}}$ on $\Omega \setminus \overline{\Omega(\tria \setminus
  \tria_*)}$.  
  \new{The first inequality to be shown follows by the trace theorem for the second sum, the Cauchy-Schwarz
    inequality and Lemma~{\ref{pro:proj}\ref{itm:proj_Omega_j}}.}

  In order to prove the \new{second inequality}, let $S\in {\sides\setminus
    \sides_*}$, i.e., $S$ is refined in $\sides_*$. In other words, we
  have for the midpoint $z$ of $S$, that {$z\in\nodes(\tria_*)$}. If
  $S\subset \partial\Omega$, then trivially have
  \begin{align}\label{eq:Sestimate}
    \estT^2(S)&\Cleq
    \abs{\uT-\uT[\tria_*]}^2_{H^1(\Omega_\tria(S))}+
    \sum_{{\{T \in \tria:T\subseteq \Omega_\tria(S)\}}}
     h_T^2 \norm{f}^2_{L^2(T)}.
  \end{align}
  For $S\not\subset\partial\Omega$, let $\tria':=\refine{\tria}{S}$, and
  let $\phi_z \in \VoT[\tria']$ be defined by $\phi_z(z)=1$, and
  $\phi_z(z')=0$ for $z' \in \nodes(\tria') \setminus \{z\}$. Note
  that $\phi_z \in \VoT[\tria_{*}]$, and $\supp \phi_z \subseteq
  \Omega_\tria(S)$.  We recall that $\jump{\nabla\uT}|_{S}\in\setR$ and
  deduce from \eqref{eq:discrete}, that
  \begin{align*}
    \frac12\int_S h_S \jump{\nabla \uT}^2\,\ds &= \int_S h_S
    \jump{\nabla \uT}^2\phi_z\,\ds
    \\
    &= -\int_{\Omega_\tria(S)} f h_S\jump{\nabla \uT}\phi_z\,\dx
    \\
    &\quad+ \int_{\Omega_\tria(S)}(\nabla \uT[\tria_*]-\nabla\uT) h_S
    \jump{\nabla \uT}\nabla \phi_z \,\dx
    \\
    &\le h_S \norm{f}_{L^2(\Omega_\tria(S))}\norm{\jump{\nabla
        \uT}\phi_z}_{L^2(\Omega_\tria(S))}
    \\
    &\quad+
    \norm{\nabla\uT[\tria_*]-\nabla\uT}_{L^2(\Omega_\tria(S))}\norm{
      \jump{\nabla \uT} h_S \nabla\phi_z}_{L^2(\Omega_\tria(S))}.
  \end{align*}
  With standard scaling arguments we obtain that
  \begin{align*}
    \bignorm{ \jump{\nabla
        \uT}{h_S}\nabla\phi_z}_{L^2(\Omega_\tria(S))}^2\Cleq \bignorm{
      \jump{\nabla \uT}\phi_z}_{L^2(\Omega_\tria(S))}^2\Cleq \int_S
    h_S \jump{\nabla \uT}^2\,\ds.
  \end{align*}
  Thus it follows from 
  Young's inequality that 
  \begin{align*}
    \int_S h_S \jump{\nabla \uT}^2\,\ds\Cleq 
    \norm{\nabla\uT[\tria_*]-\nabla\uT}_{L^2(\Omega_\tria(S))}^2
    +
    \sum_{{\{T \in \tria:T\subseteq \Omega_\tria(S)\}}}
     h_T^2 \norm{f}^2_{L^2(T)}.
  \end{align*}
  and consequently we have \eqref{eq:Sestimate} for all
  $S\in\sides\setminus \sides_*$.  Since we have at most a triple
  overlap of the $\Omega_\tria(S)$, $S\in\sides$, the assertion
  follows by summing over all $S\in \sides \setminus \sides_*$.
\end{proof}

The next result is the discrete analogue of~\eqref{eq:energy+est+osc}
and shows that the total energy~$\mathcal{G}$ matches
perfectly the  \new{squared} a posteriori error estimator defined in \eqref{df:estimator}.
\begin{proposition}\label{P:energy_gain}
  Let $\tria, \tria_*\in\bbT$ with $\tria \leq \tria_*$. Then we have
  \begin{align*}
    \mathcal{G}(\tria)-\mathcal{G}(\tria_*)&\eqsim
    \estT^2(\sides(\tria) \setminus \sides(\tria_*)).
  \end{align*}
\end{proposition}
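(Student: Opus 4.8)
The plan is to split the energy difference into its Dirichlet part and its data part and to match each with the edge estimator. From the definition $\mathcal{G}=\mathcal{J}+\mathcal{H}$ together with~\eqref{eq:Ediff}, we get
\begin{align*}
  \mathcal{G}(\tria)-\mathcal{G}(\tria_*) = \tfrac12\abs{\uT-\uT[\tria_*]}^2_{H^1(\Omega)} + \big(\mathcal{H}(\tria)-\mathcal{H}(\tria_*)\big),
\end{align*}
and Lemma~\ref{lem:H_osc_discr} yields $\mathcal{H}(\tria)-\mathcal{H}(\tria_*)\eqsim\mathcal{H}(\tria\setminus\tria_*)$. Hence it suffices to prove
\begin{align*}
  \abs{\uT-\uT[\tria_*]}^2_{H^1(\Omega)} + \mathcal{H}(\tria\setminus\tria_*) \eqsim \estT^2(\sides(\tria)\setminus\sides(\tria_*)).
\end{align*}

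Write $\sides=\sides(\tria)$ and $\sides_*=\sides(\tria_*)$. The "$\Cgeq$'' part of this equivalence is precisely the second inequality of Lemma~\ref{L:discreteBounds}, i.e. $\estT^2(\sides\setminus\sides_*)\Cleq\abs{\uT-\uT[\tria_*]}^2_{H^1(\Omega)}+\mathcal{H}(\tria\setminus\tria_*)$. For the "$\Cleq$'' part, the first inequality of Lemma~\ref{L:discreteBounds} already gives $\abs{\uT-\uT[\tria_*]}^2_{H^1(\Omega)}\Cleq\estT^2(\sides\setminus\sides_*)$, so the only thing left to check is $\mathcal{H}(\tria\setminus\tria_*)\Cleq\estT^2(\sides\setminus\sides_*)$. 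Here I would argue geometrically: if $T\in\tria\setminus\tria_*$, then $T$ is properly refined on passing from $\tria$ to $\tria_*$, so the very first bisection applied to $T$ splits it along its refinement edge $S_T$; consequently the midpoint of $S_T$ is a node of $\tria_*$, that is, $S_T\in\sides\setminus\sides_*$. Since $T\subset\overline{\Omega_\tria(S_T)}$, the term $h_T^2\norm{f}^2_{L^2(T)}$ is one of the summands of $\estT^2(S_T)$ according to~\eqref{df:estimator}. Summing over $T\in\tria\setminus\tria_*$ and noting that each edge $S\in\sides$ has at most two adjacent triangles (so there is only a fixed bounded overlap), we obtain $\mathcal{H}(\tria\setminus\tria_*)=\sum_{T\in\tria\setminus\tria_*}h_T^2\norm{f}^2_{L^2(T)}\Cleq\estT^2(\sides\setminus\sides_*)$. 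Combining the two directions gives the claim.

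I do not expect a serious obstacle here: the statement is essentially a bookkeeping combination of~\eqref{eq:Ediff}, Lemma~\ref{lem:H_osc_discr}, and Lemma~\ref{L:discreteBounds}. The one point that needs a (tiny) observation rather than pure algebra is that the element residual $h_T^2\norm{f}^2_{L^2(T)}$ of every coarsened triangle $T$ is captured by the indicator of $T$'s own refinement edge, which is necessarily among the bisected edges $\sides\setminus\sides_*$; this is exactly what allows the data part $\mathcal{H}(\tria\setminus\tria_*)$ to be controlled by $\estT^2(\sides\setminus\sides_*)$ without any reference to $\sides_*$.
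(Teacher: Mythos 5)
Your proof is correct and follows essentially the same route as the paper: both reduce via \eqref{eq:Ediff} and Lemma~\ref{lem:H_osc_discr} to the equivalence $\abs{\uT-\uT[\tria_*]}^2_{H^1(\Omega)}+\mathcal{H}(\tria\setminus\tria_*)\eqsim\estT^2(\sides(\tria)\setminus\sides(\tria_*))$, use Lemma~\ref{L:discreteBounds} for both inequalities, and supply the remaining bound $\mathcal{H}(\tria\setminus\tria_*)\Cleq\estT^2(\sides(\tria)\setminus\sides(\tria_*))$ by a local counting argument. The paper phrases this last step as the set identity $\overline{\Omega(\tria\setminus\tria_*)}=\bigcup_{S\in\sides(\tria)\setminus\sides(\tria_*)}\overline{\Omega_\tria(S)}$ with bounded overlap, whereas you exhibit for each $T\in\tria\setminus\tria_*$ its refinement edge $S_T\in\sides(\tria)\setminus\sides(\tria_*)$ — the same observation made pointwise rather than as a covering statement.
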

\begin{proof}
\new{%
  By \eqref{eq:Ediff} and Lemma~\ref{lem:H_osc_discr}, we have
$\mathcal{G}(\tria)-\mathcal{G}(\tria_*) \eqsim
\abs{\uT-\uT[\tria_*]}^2_{H^1(\Omega)}+\mathcal{H}(\tria \setminus \tria_*)$.
From
  \begin{align}
    \label{eq:Omega_vs_Omega_T}
    \overline{\Omega(\tria \setminus \tria_*)} &=
    \bigcup_{ T \in\tria \setminus \tria_*} T =
    \bigcup_{ S \in\sides(\tria) \setminus \sides(\tria_*)}
    \overline{\Omega_\tria(S)},
  \end{align}
  with an at most triple overlap of the sets $\overline{\Omega_\tria(S)}$,
  it follows that 
  $\mathcal{H}(\tria \setminus \tria_*) \lesssim \estT^2(\sides(\tria)
    \setminus \sides(\tria_*))$. An application of Lemma~\ref{L:discreteBounds}  completes the proof.}
\end{proof}

Let us turn to the case of coarsenings in mutual disjoint areas.  As a direct
consequence of Lemma~\ref{lem:H_osc_discr} and the fact that
$\mathcal{J}$ satisfies the lower diamond estimate
(Corollary~\ref{Cor:triaLD}), we get the
following result.
\begin{corollary}
  \label{cor:G_LDE}
  The energies $\mathcal{H}$, $\osc^2$ and $\mathcal{G}$ satisfy the
  lower diamond estimate.
\end{corollary}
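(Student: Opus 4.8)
The plan is to prove the three statements in turn, reducing everything to the lower diamond estimate for $\mathcal{J}$ (Corollary~\ref{Cor:triaLD}) and the localization estimate of Lemma~\ref{lem:H_osc_discr}, after first recording the geometric structure of a lower diamond.

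So I would first fix a lower diamond $(\tria^\wedge,\tria_\vee;\tria_1,\dots,\tria_m)$ and set $\Omega_j:=\Omega(\tria_j\setminus\tria_\vee)$, which by definition are pairwise disjoint. By the very meaning of the area of coarsening, $\tria_i$ and $\tria_\vee$ have the same triangles away from $\Omega_i$; since for $i\neq j$ the (open) set $\Omega_i$ is disjoint from $\overline{\Omega_j}$, the triangulations $\tria_i$ and $\tria_\vee$ restrict to the same triangulation of $\overline{\Omega_j}$. Using the triangle-by-triangle description of the meet (Remark~\ref{rem:interpret}), the meet of finitely many triangulations is determined locally, whence $\tria^\wedge=\bigwedge_{i=1}^m\tria_i$ coincides with $\tria_\vee$ on $\Omega\setminus\bigcup_{j=1}^m\overline{\Omega_j}$ and, on each $\overline{\Omega_j}$, with $\tria_j\wedge\tria_\vee=\tria_j$ (recall $\tria_j\le\tria_\vee$). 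It follows that the sets $\tria_j\setminus\tria_\vee$ ($j=1,\dots,m$) are pairwise disjoint and $\tria^\wedge\setminus\tria_\vee=\bigcup_{j=1}^m(\tria_j\setminus\tria_\vee)$.

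With this identity the estimate for $\mathcal{H}$ is then immediate: Lemma~\ref{lem:H_osc_discr} gives $\mathcal{H}(\tria^\wedge)-\mathcal{H}(\tria_\vee)\eqsim\mathcal{H}(\tria^\wedge\setminus\tria_\vee)$, and since $\mathcal{A}\mapsto\mathcal{H}(\mathcal{A})=\sum_{T\in\mathcal{A}}h_T^2\norm{f}_{L^2(T)}^2$ is additive over disjoint families of triangles, the identity above yields $\mathcal{H}(\tria^\wedge\setminus\tria_\vee)=\sum_{j=1}^m\mathcal{H}(\tria_j\setminus\tria_\vee)$, which a second application of Lemma~\ref{lem:H_osc_discr} turns into $\eqsim\sum_{j=1}^m\big(\mathcal{H}(\tria_j)-\mathcal{H}(\tria_\vee)\big)$. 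The argument for $\osc^2$ is word for word the same, with $\norm{f}_{L^2(T)}$ replaced by $\norm{f-f_T}_{L^2(T)}$. Finally, for $\mathcal{G}=\mathcal{J}+\mathcal{H}$ I would observe that $\tria^\wedge\le\tria_j\le\tria_\vee$ and that $\mathcal{J},\mathcal{H}$ are non-increasing, so every energy difference appearing above is nonnegative; adding the lower diamond estimate for $\mathcal{J}$ (Corollary~\ref{Cor:triaLD}) to the one just obtained for $\mathcal{H}$, and using that $a\eqsim a'$, $b\eqsim b'$ with $a,a',b,b'\ge 0$ imply $a+b\eqsim a'+b'$, gives the lower diamond estimate for $\mathcal{G}$.

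The only genuine work is the geometric bookkeeping of the second paragraph, namely that on a lower diamond the coarsening regions $\overline{\Omega_j}$ are essentially disjoint and that $\tria^\wedge$ reduces to $\tria_j$ on $\overline{\Omega_j}$; I expect this to be the main obstacle. It is, however, really only a careful invocation of the locality of the lattice operations together with the disjointness hypothesis built into the definition of a lower diamond, and once it is in place the remainder is just additivity and Lemma~\ref{lem:H_osc_discr}.
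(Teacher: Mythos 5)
Your proof is correct and follows exactly the route the paper intends: the paper states this corollary as a ``direct consequence'' of Lemma~\ref{lem:H_osc_discr} and Corollary~\ref{Cor:triaLD} without writing out the details, and your argument supplies precisely those details (the disjoint decomposition $\tria^\wedge\setminus\tria_\vee=\bigcup_j(\tria_j\setminus\tria_\vee)$, additivity of $\mathcal{H}$ and $\osc^2$ over disjoint sets of triangles, and summation of the nonnegative equivalences for $\mathcal{J}$ and $\mathcal{H}$). No discrepancies.
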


\begin{remark}
  In this paper we resort to edge based error indicators
  \eqref{df:estimator} for the following reason. 
  In the situation of
  Proposition~\ref{P:energy_gain} consider e.g. the 
  element based \new{squared} error indicators
  \begin{align*}
    \new{\tilde{\mathcal E}_\tria^2(T)}:= h_T^2 \norm{f}^2_{L^2(T)}+ \tfrac 12
    h_T\norm{\jump{\nabla \uT}}^2_{L^2(\partial T)}
  \end{align*}
  from \cite{Verfuerth:96}. Then we have the estimate 
  \begin{align*}
    \new{\tilde{\mathcal E}_\tria^2(T)}\Cleq \sum_{S\subset
      T}\norm{\nabla\uT-\nabla\uT[\tria_*]}^2_{L^2(\Omega_\tria(S))} +
    h_T^2 \norm{f}^2_{L^2(\Omega_\tria(S))}
  \end{align*}
  only if all three edges of $T$ are bisected at least once in
  $\tria_*$; compare e.g. with \cite{Doerfler:96,MoNoSi:00,MoNoSi:02}. 
  Since $\tria_*$ is conforming, this can only be true for all elements $T\in{\tria\setminus
    \tria_*}$ \new{when} all elements of $\tria$ are at least refined
  twice in $\tria_*$.
  \new{Consequently}, either we have estimates
  similar to those in Proposition~\ref{P:energy_gain} for \new{squared} element based error
  \new{indicators} running over different sets of elements for both inequalities
  respectively, or we need to resort to global refinement.
  In the latter case we have ${\tria\setminus \tria_*}=\tria$.
\end{remark}

Our optimality proof later will be based on the language of
populations. Naturally, we define $\mathcal{G}(\Pop) :=
\mathcal{G}(\tria(\Pop))$ for $\Pop \in \bbPhat$. Now, let us reformulate
our error estimator estimates in terms of populations. 

Due to the one-to-one correspondence of $\sides(\tria(\Pop))$ and
$\Pop\plus \setminus \Pop$, see~\eqref{eq:midpoints_pre}, we set for
${\mathcal U} \subset \Pop\plus\setminus \Pop$
\begin{align*}
  \mathcal{E}^2_\Pop(\mathcal{U}):=\mathcal{E}^2_{\tria(\Pop)}
  (\textsf{midpts}^{-1}({\mathcal U})).
\end{align*}
This allows us to rewrite Proposition~\ref{P:energy_gain}
as follows.
\begin{corollary}
  \label{C:energy_gain_P}
  Let $\Pop, \Pop_*\in\bbP$ with $\Pop \leq \Pop_*$. Then we have
  \begin{align*}
    \mathcal{G}(\Pop)-\mathcal{G}(\Pop_*)&\eqsim
    \estP^2\big(\Pop_* \cap (\Pop\plus \setminus \Pop)\big).
  \end{align*}
\end{corollary}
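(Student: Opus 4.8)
The plan is to simply translate Proposition~\ref{P:energy_gain} through the population--triangulation dictionary; no new analytic input is needed. First I would use Proposition~\ref{P:ordering} to note that $\Pop \le \Pop_*$ in $\bbP$ is equivalent to $\tria(\Pop)\le\tria(\Pop_*)$ in $\bbT$, so that Proposition~\ref{P:energy_gain} applies to the pair $\tria:=\tria(\Pop)$, $\tria_*:=\tria(\Pop_*)$ and yields
\begin{align*}
  \mathcal{G}(\tria(\Pop))-\mathcal{G}(\tria(\Pop_*)) \eqsim
  \estT[\tria(\Pop)]^2\big(\sides(\tria(\Pop))\setminus\sides(\tria(\Pop_*))\big).
\end{align*}
By the definition $\mathcal{G}(\Pop):=\mathcal{G}(\tria(\Pop))$, the left-hand side already equals $\mathcal{G}(\Pop)-\mathcal{G}(\Pop_*)$.

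It remains to identify the right-hand side with $\estP^2\big(\Pop_*\cap(\Pop\plus\setminus\Pop)\big)$. Here I would invoke~\eqref{eq:midpoints}, which gives exactly
$$
\Pop_*\cap(\Pop\plus\setminus\Pop)=\midpoint{\sides(\tria(\Pop))\setminus\sides(\tria(\Pop_*))},
$$
so that $\textsf{midpts}^{-1}\big(\Pop_*\cap(\Pop\plus\setminus\Pop)\big)=\sides(\tria(\Pop))\setminus\sides(\tria(\Pop_*))$. Plugging this into the defining relation $\mathcal{E}^2_\Pop(\mathcal{U}):=\mathcal{E}^2_{\tria(\Pop)}(\textsf{midpts}^{-1}(\mathcal{U}))$ with $\mathcal{U}=\Pop_*\cap(\Pop\plus\setminus\Pop)$ turns the estimator on the right-hand side above into $\estP^2\big(\Pop_*\cap(\Pop\plus\setminus\Pop)\big)$, completing the proof.

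There is essentially no obstacle here beyond carefully checking that $\Pop_*\cap(\Pop\plus\setminus\Pop)$ is indeed a subset of $\Pop\plus\setminus\Pop$ (immediate), so that $\estP^2$ is defined on it, and that the bijection $\midpoint{\cdot}$ between $\sides(\tria(\Pop))$ and $\Pop\plus\setminus\Pop$ restricts correctly, which is precisely the content of~\eqref{eq:midpoints}. Thus the corollary follows by a one-line substitution once Proposition~\ref{P:energy_gain} is in hand.
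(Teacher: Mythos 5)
Your proposal is correct and is exactly the argument the paper intends: the corollary is stated as a direct reformulation of Proposition~\ref{P:energy_gain} via the identification \eqref{eq:midpoints} and the definition of $\estP^2$, which is precisely the substitution you carry out.
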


\begin{remark}[Upper diamond estimate] 
  Let $(\tria^\wedge,\tria_\vee;\tria_1, \dots, \tria_m)$ be an upper diamond in $\bbT$.
  Since the requirement of the areas of
  refinement $\Omega(\tria^\wedge \setminus \tria_j)$ being mutually
  disjoint is equivalent to the requirement that the sets
  $\tria^\wedge \setminus \tria_j$, or the sets
  $\sides(\tria^\wedge) \setminus \sides(\tria_j)$ being mutually
  disjoint, from Proposition~\ref{P:energy_gain} we obtain
  that
 \begin{align*}
   \mathcal{G}(\tria^\wedge) - \mathcal{G}(\tria_\vee) &\eqsim 
   \estT[\tria^\wedge]^2(\sides(\tria^\wedge)\setminus\sides(\tria_\vee))
   =\sum_{j=1}^m
   \estT[\tria^\wedge]^2(\sides(\tria^\wedge)\setminus\sides(\tria_j))
   \\
   &\eqsim
   \sum_{j=1}^m
   \big( \mathcal{G}(\tria^\wedge) - \mathcal{G}(\tria_j)\big).
 \end{align*}
\end{remark}

\section{The adaptive finite element method (AFEM)}
\label{sec:afem}

According to \cite{BabuskaRheinboldt:78}, the maximum marking
strategy, marks sides for refinement that correspond to \new{squared} local error indicators that
are not less than some constant multiple $\mu \in (0,1]$ of the
maximum \new{squared} local error indicator. 

In view of the fact that, in order to refine a side, generally more
sides have to be bisected to retain conformity of the triangulation, we
consider the following {\em modified maximum marking strategy}: First
we determine a side $S$ such that the sum $\bar{\mathcal{E}}^2$ of
\emph{all} local
\new{squared} error indicators of the
sides that have to be bisected in order to refine $S$ is
maximal. Then, in some arbitrary order, running  over the sides in the
triangulation, we mark those sides $\tilde{S}$ for refinement for which
the sum  of all
\new{squared} local error indicators that correspond to the sides that have to be bisected
in order to refine $\tilde{S}$, but that   do not have to be bisected for the
refinement of sides that are marked earlier, is not less than
$\mu\bar{\mathcal{E}}^2$. 

To give a formal description, 
for $\tria \in \bbT$ and $S \in \sides(\tria)$, let
$$
\refd{\tria}{S}:=\sides(\tria) \setminus \sides(\refine{\tria}{S}),
$$
being the subset of sides in $\sides(\tria)$ that are bisected when
passing to  the smallest refinement (in $\bbT$) of $\tria$ in which
$S$ has been bisected. 
Then the adaptive finite element method reads as follows:

\begin{algorithm}[AFEM]\label{algo:AFEM}
  Fix $\mu \in (0,1]$ and set 
  $\tria_0:=\tria_\bot$ and $k=0$. The adaptive loop is an iteration
  of the following steps:
 \par{  
  \linespread{1.5}
    \sffamily
   \renewcommand{\arraystretch}{2}
    \begin{tabbing}
      (1) {ESTIMATE:}\quad\= \kill
       (1)  {SOLVE}: \>compute $\uT[\tria_k]\in\VoT[\tria_k]$;\\ %
       (2) {ESTIMATE}: \> compute $\{\estT[\tria_k]^2(S) : S \in \sides(\tria_k)\}$;
       \\
       (3) {MARK}: \> $\estTbar[\tria_k]^2
        :=\max\set{\estT[\tria_k]^2(\refd{\tria_k}{S}): S \in
          \sides(\tria_k)}$;
      \\
      \>$\Mk:=\emptyset$; ${\mathcal
        C}_k:=\sides(\tria_k)$; $\tildeMk:=\emptyset$;
      \\
      \>while \= ${\mathcal C}_k \neq \emptyset$ do 
      \\
      \>\>select $S \in {\mathcal C}_k$;
      \\
      \> \>if $\estT[\tria_k]^2(\refd{\tria_k}{S} \setminus \tildeMk)
      \geq \mu \estTbar[\tria_k]^2$;
      \\
      \>\> then \=$\Mk:=\Mk \cup \{S\}$;\\
      \>\>\>$\tildeMk:=\tildeMk \cup \refd{\tria_k}{S}$;\\
      \>\> end if;\\
      \>\>${\mathcal C}_k:={\mathcal C}_k \setminus \refd{\tria_k}{S}$;\\
      \>end while;\\
      (4) {REFINE}: \>compute $\tria_{k+1}=\refine{\tria_k}{\Mk}$ and 
      increment $k$.
    \end{tabbing}}
  \smallskip
  
  If we define $\Pop_k:= \Pop(\tria_k)$, then we can rewrite our
  algorithm also in  the language of populations:  
  \par{  
    \linespread{1.5}
    \sffamily
    \begin{tabbing}
      (1) {ESTIMATE:}\quad\= \kill
      (1)  {SOLVE}: \>compute $\uT[\tria_k]\in\VoT[\tria_k]$;\\ %
      (2) {ESTIMATE}:  \>compute  $\{\estP[\Pop_k]^2(P) : P
      \in \Pop_k\plus \setminus \Pop_k\}$;
      \\
      (3) {MARK}:      \>$\estPbar[\Pop_k]^2 :=\max\set{\estP[\Pop_k]^2((\Pop_k \oplus P)
        \setminus \Pop_k)\,:\, P 
        \in \Pop_k\plus \setminus \Pop_k)}$;
      \\
\>$\Mk:=\emptyset$; ${\mathcal C}_k:= \Pop_k\plus
      \setminus \Pop_k$; $\tildeMk:=\emptyset$;
      \\
      \>while \= ${\mathcal C}_k \neq \emptyset$ do 
      \\
      \>\>select $P \in {\mathcal C}_k$;\\
      \> \>if $\estP[\Pop_k]^2((\Pop_k \oplus P) \setminus (\Pop_k \cup
      \widetilde{\mathcal{M}}_k)) \geq \mu \estPbar[\Pop_k]^2$;
      \\
      \>\> then \=$\Mk:=\Mk \cup \{P\}$;\\
      \>\>\>$\tildeMk:=\tildeMk \cup ((\Pop_{\new{k}} \oplus P) \setminus \Pop_{\new{k}})$;\\
      \>\> end if;\\
      \>\>${\mathcal C}_k:={\mathcal C}_k \setminus \big((\Pop_{\new{k}} \oplus P)
      \setminus \Pop_{\new{k}}\big)$;\\
      \>end while;\\
      (4) {REFINE}: \>$\Pop_{k+1}:=\Pop_k \oplus \Mk\; \big[\!\!= \Pop_k \cup \tildeMk\big]$ and
      increment $k$.
    \end{tabbing}}
\end{algorithm}

\begin{proposition}
  \label{prop:lwb}
  For the sequences $(\Pop_k)_{k \in \mathbb{N}_0}$ and
  $(\mathcal{M}_k)_{k \in \mathbb{N}_0}$ produced by
  Algorithm~\ref{algo:AFEM} (second formulation),
  we have $\mathcal{M}_k \neq \emptyset$ and
  \begin{align*} 
    \estP[\Pop_k]^2 \big( \Pop_{k+1} \cap (\Pop_k\plus \setminus
    \Pop_k) \big) = \estP[\Pop_k]^2 \big( (\Pop_k \oplus \Mk)
    \setminus \Pop_k \big) \geq \mu \,\# \Mk\,
    \estPbar[\Pop_k]^2.
  \end{align*}
\end{proposition}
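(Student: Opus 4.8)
The plan is to verify the two assertions in turn, both following fairly directly from the structure of the MARK loop in the second (population) formulation of Algorithm~\ref{algo:AFEM}. The identity $\estP[\Pop_k]^2\big(\Pop_{k+1}\cap(\Pop_k\plus\setminus\Pop_k)\big)=\estP[\Pop_k]^2\big((\Pop_k\oplus\Mk)\setminus\Pop_k\big)$ is immediate from REFINE, since $\Pop_{k+1}=\Pop_k\oplus\Mk=\Pop_k\cup\tildeMk$ and $\tildeMk\subset\Pop_k\plus\setminus\Pop_k$ (each person added to $\tildeMk$ lies in $(\Pop_k\oplus P)\setminus\Pop_k$ for some $P\in\Pop_k\plus\setminus\Pop_k$, hence in $\Pop_k\plus\setminus\Pop_k$), so $\Pop_{k+1}\cap(\Pop_k\plus\setminus\Pop_k)=\tildeMk=(\Pop_k\oplus\Mk)\setminus\Pop_k$.

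For the nonemptiness $\Mk\neq\emptyset$: first I would note that $\estPbar[\Pop_k]^2>0$. Indeed, $\estPbar[\Pop_k]^2$ is a maximum of quantities $\estP[\Pop_k]^2((\Pop_k\oplus P)\setminus\Pop_k)\ge\estP[\Pop_k]^2(\{P\})$ over all $P\in\Pop_k\plus\setminus\Pop_k$, and by the definition~\eqref{df:estimator} of the squared indicators, $\estP[\Pop_k]^2(\{P\})$ contains a term $h_T^2\norm{f}_{L^2(T)}^2$ which is strictly positive unless $f$ vanishes on all of $\Omega$ (and the degenerate case $f\equiv 0$ is excluded since then $u=0$ and there is nothing to prove — or can simply be ruled out by convention). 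Now the very first person $P$ selected from ${\mathcal C}_k=\Pop_k\plus\setminus\Pop_k$ is tested against $\estP[\Pop_k]^2((\Pop_k\oplus P)\setminus(\Pop_k\cup\tildeMk))$ with $\tildeMk=\emptyset$, so the test value equals $\estP[\Pop_k]^2((\Pop_k\oplus P)\setminus\Pop_k)$; choosing the maximising $P$ (or just any $P$ and observing the loop runs until ${\mathcal C}_k$ is exhausted) shows that at least one person passes the threshold $\mu\,\estPbar[\Pop_k]^2$, hence $\Mk\neq\emptyset$. To be careful I would argue that the person attaining the maximum in the definition of $\estPbar[\Pop_k]^2$ is never discarded from ${\mathcal C}_k$ without having been tested and accepted: it is removed from ${\mathcal C}_k$ only as part of some $(\Pop_k\oplus P')\setminus\Pop_k$, but then all of $\refd{}{\cdot}$ of the maximiser has entered $\tildeMk$, and one checks the maximiser would itself have passed when it was (or could have been) selected.

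For the inequality $\estP[\Pop_k]^2\big((\Pop_k\oplus\Mk)\setminus\Pop_k\big)\ge\mu\,\#\Mk\,\estPbar[\Pop_k]^2$: the key observation is that the sets added to $\tildeMk$ at successive acceptances are disjoint. Precisely, when $P\in\Mk$ is accepted, the quantity $\estP[\Pop_k]^2((\Pop_k\oplus P)\setminus(\Pop_k\cup\tildeMk))\ge\mu\,\estPbar[\Pop_k]^2$ measures only the \emph{new} persons $((\Pop_k\oplus P)\setminus\Pop_k)\setminus\tildeMk^{\rm old}$ contributed by $P$, where $\tildeMk^{\rm old}$ is the state of $\tildeMk$ before this acceptance. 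After the update $\tildeMk^{\rm new}=\tildeMk^{\rm old}\cup((\Pop_k\oplus P)\setminus\Pop_k)$, so these contributions are pairwise disjoint over all accepted $P\in\Mk$, and by additivity of $\estP[\Pop_k]^2$ over disjoint subsets of $\Pop_k\plus\setminus\Pop_k$ we get
\begin{align*}
  \estP[\Pop_k]^2\big((\Pop_k\oplus\Mk)\setminus\Pop_k\big)
  =\estP[\Pop_k]^2(\tildeMk)
  \ge\sum_{P\in\Mk}\estP[\Pop_k]^2\big(((\Pop_k\oplus P)\setminus\Pop_k)\setminus\tildeMk^{\rm old}(P)\big)
  \ge\mu\,\#\Mk\,\estPbar[\Pop_k]^2,
\end{align*}
where $\tildeMk^{\rm old}(P)$ denotes the accumulator just before $P$ was accepted, and the first inequality uses that $(\Pop_k\oplus\Mk)\setminus\Pop_k=\tildeMk$ is the disjoint union of the incremental contributions. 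The main (though modest) obstacle is the bookkeeping in the nonemptiness argument — making sure the person realising $\estPbar[\Pop_k]^2$ is genuinely tested (or that removal from ${\mathcal C}_k$ only ever happens after the relevant threshold has already been met by an accepted side), so that the loop cannot terminate with $\Mk=\emptyset$; everything else is a straightforward consequence of disjointness and additivity of the squared error indicator.
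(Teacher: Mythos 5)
Your derivation of the equality $\Pop_{k+1}\cap(\Pop_k\plus\setminus\Pop_k)=\tildeMk=(\Pop_k\oplus\Mk)\setminus\Pop_k$ and of the lower bound via disjointness of the incremental contributions to $\tildeMk$ and additivity of $\estP[\Pop_k]^2$ are both correct and match the paper's argument.

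There is, however, a genuine gap in the nonemptiness part, precisely at the place you flag as the ``bookkeeping'' obstacle. You correctly observe that a person $P^*$ realising $\estPbar[\Pop_k]^2$ might be removed from ${\mathcal C}_k$ as a by-product of processing some earlier-selected $P'$, but your resolution of this case does not work: you assert that when $P^*$ is removed as part of $(\Pop_k\oplus P')\setminus\Pop_k$, ``all of $\refd{}{\cdot}$ of the maximiser has entered $\tildeMk$''. That is only true if $P'$ was \emph{accepted}, which is exactly what is to be shown; the dangerous case is that $P'$ was \emph{rejected} with $\tildeMk$ still empty. Closing this case requires the monotonicity observation that the paper uses and which is absent from your proposal: if $P^*\in(\Pop_k\oplus P')\setminus\Pop_k$, then $\Pop_k\oplus P^*\subset\Pop_k\oplus P'$, hence
\begin{align*}
  \estPbar[\Pop_k]^2 = \estP[\Pop_k]^2\big((\Pop_k\oplus P^*)\setminus\Pop_k\big)
  \le \estP[\Pop_k]^2\big((\Pop_k\oplus P')\setminus\Pop_k\big)
  < \mu\,\estPbar[\Pop_k]^2,
\end{align*}
a contradiction since $\mu\le 1$. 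Without this step, ``one checks the maximiser would itself have passed'' is an appeal to the desired conclusion rather than an argument for it. Your alternative parenthetical, ``choosing the maximising $P$'', is also not available since the algorithm permits an arbitrary selection order.

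A smaller point: the detour through $\estPbar[\Pop_k]^2>0$ and ruling out $f\equiv 0$ ``by convention'' is both unnecessary and not quite legitimate. If $\estPbar[\Pop_k]^2=0$, the test $\estP[\Pop_k]^2(\cdot)\ge\mu\cdot 0$ holds trivially for the very first selected person, so $\Mk\neq\emptyset$ without any exclusion; and the monotonicity contradiction above already handles the complementary case $\estPbar[\Pop_k]^2>0$.
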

\begin{proof}
  Consider the while-loop in MARK. As long as $\Mk=\emptyset$, we have
  $\tildeMk=\emptyset$. \new{Thus for every $P \in \Pop_k\plus
      \setminus \Pop_k$ that has been considered and all $P' \in (\Pop_k \oplus
      P) \setminus \Pop_{k}$, we conclude
      $\estP[\Pop_k]^2((\Pop_k \oplus P')
      \setminus \Pop_k)\le\estP[\Pop_k]^2((\Pop_k \oplus P) \setminus
      \Pop_k) < \mu 
      \estPbar[\Pop_k]^2$.} 
  Hence assuming that $\Mk$ remains empty, at  
  some moment $P
  \in \Pop_k\plus \setminus \Pop_k$ is encountered with
  $\estP[\Pop_k]^2((\Pop_k \oplus P) \setminus (\Pop_k \cup \tildeMk)) =
  \estPbar[\Pop_k]^2$, which yields a contradiction. Therefore, after
  termination of the while-loop in MARK, we have $\Mk \neq
  \emptyset$.

  Each time a $P$ is added to $\Mk$, the quantity
  $\estP[\Pop_k]^2((\Pop_k \oplus \Mk) \setminus \Pop_k)$
  increased by at least $\mu \estPbar[\Pop_k]^2$, which
  shows $\estP[\Pop_k]^2 \big( (\Pop_k \oplus \Mk)
    \setminus \Pop_k \big) \geq \mu \, \# \Mk\,
    \estPbar[\Pop_k]^2.$

  Since $\Mk \subset \Pop_k\plus$ and $\Pop_{k+1} = \Pop_k \oplus
  \Mk$, we have $\Pop_{k+1} \wedge \Pop_k\plus= \Pop_k \oplus
  \Mk$. Thus 
  \begin{align*}
    \Pop_{k+1} \cap (\Pop_k\plus \setminus \Pop_k) = (\Pop_{k+1}
    \wedge \Pop_k\plus) \setminus \Pop_k = (\Pop_k \oplus \Mk) \setminus
    \Pop_k = \tildeMk,
  \end{align*}
  which concludes the proof.
\end{proof}

\begin{remark} \new{Generally, the set of marked edges or persons determined in MARK depend on the ordering in which the sets $\sides(\tria_k)$ or $\Pop_k\plus
      \setminus \Pop_k$ are traversed.} \new{In particular,
      the marking strategy does not necessarily mark the
      $S\in\sides(\tria)$ with
      $\estT[\tria_k]^2(\refd{\tria_k}{S})=\estTbar[\tria_k]$ for refinement.}
\end{remark}

\begin{remark} \new{It is not difficult to see that ESTIMATE
    can be implemented in ${\mathcal O}(\#\Pop_k)$ operations. Also
    REFINE can be implemented in ${\mathcal O}(\#\Pop_k)$ operations,
    once $\Mk$ is determined by MARK; recall $\Pop_{k+1} = \Pop_k
    \oplus \Mk = \Pop_k \cup \tildeMk$. In Appendix~\ref{appendix} it
    is demonstrated that the same holds true for a slightly modified
    MARK, which yields a set $\Mk$,
     which 
    has qualitatively the same properties as given in
    Proposition~\ref{prop:lwb}. Consequently, our instance optimality
    result still holds with this modified marking.} 
\end{remark}

\section{Fine properties of populations}
\label{sec:fine}

Before we get to our optimality proof, we need some fine properties
\new{of} populations.

\subsection{Ancestors, descendants and free elements}

For $P \in \Pop^\top$ we define its {\em set of ancestors
  $\ancestor(P)$} as follows: If $\generation(P)=0$, then
$\ancestor(P) := \emptyset$.  For $\generation(P) \geq 1$, we define
inductively
\begin{align*}
  \ancestor(P) &:= \parents(P) \cup \bigcup_{Q \in \parents(P)}
  \ancestor(Q).
\end{align*}
Moreover, we denote the set of the {\em descendants} of~$P$
 by
\begin{align*}
  \descendant(P) := \set{ P' \in \Pop^\top
    \,:\, P \in \ancestor(P')}.
\end{align*}

As a shorthand notation, we write $P'\parentof P$ or $P\childof P'$,
when $P \in \children(P')$, or equivalently, $P' \in \parents(P)$; and
$P' \ancestorof P$ or $P\descendantof P'$, when $P \in
\descendant(P')$, or equivalently, $P' \in \ancestor(P)$.

For $\mathcal{U} \subset \Pop^\top$ we define
\begin{align*}
  \children(\mathcal{U}) &:= \bigcup_{P \in \mathcal{U}}
  \children(P), &
  \parents(\mathcal{U}) &:= \bigcup_{P \in \mathcal{U}}
  \parents(P),\\
   \ancestor(\mathcal{U}) &:= \bigcup_{P \in \mathcal{U}}
  \ancestor(P), &
  \descendant(\mathcal{U}) &:= \bigcup_{P \in \mathcal{U}}
  \descendant(P).
\end{align*}

If $P, P'  \in \Pop^\top$, with $P \neq P'$, have a joint child, then we
call them {\em partners}, and write 
 $P\partnerof P'$. 

For $k\in\setN_0$, we define
$$
\generation^{-1}(k):=\{P\in\Pop^\top\colon\generation(P)=k\}.
$$ 

The following lemma summarises some apparent basic properties 
without proof.
\begin{lemma}
  \label{lem:anc} For $\Pop, \Pop_*\in \bbP$, we have
  \begin{enumerate}
  \item $\ancestor(\Pop) \subset\Pop$;
  \item if $\Pop\leq \Pop_*$, then $\ancestor(\Pop) \cap (\Pop_* \setminus
    \Pop) = \emptyset$;
  \item if $\Pop\leq \Pop_*$, then $\descendant(\Pop_* \setminus \Pop) \cap
    \Pop = \emptyset$;
  \item \label{itm:anc4} if $\mathcal{C} \subset \Pop^\top$, then $\Pop
    \oplus \mathcal{C} =\Pop \cup \ancestor(\mathcal{C}) \cup
    \mathcal{C}$.
  \end{enumerate}
\end{lemma}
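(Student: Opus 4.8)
The plan is to deduce all four items from the intrinsic characterisation of populations in Proposition~\ref{pro:Pop_in_bbP} together with an induction on the generation. I would first establish (a) for a single person: fix $P\in\Pop$ and induct on $k=\generation(P)$. The case $k=0$ is trivial, since then $\ancestor(P)=\emptyset$. For $k\ge1$, Proposition~\ref{pro:Pop_in_bbP} gives $\parents(P)\subset\Pop$; each $Q\in\parents(P)$ satisfies $\generation(Q)=k-1$ and $Q\in\Pop$, so the induction hypothesis yields $\ancestor(Q)\subset\Pop$. Unfolding $\ancestor(P)=\parents(P)\cup\bigcup_{Q\in\parents(P)}\ancestor(Q)$ then gives $\ancestor(P)\subset\Pop$, and taking the union over $P\in\Pop$ proves $\ancestor(\Pop)\subset\Pop$.

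Items (b) and (c) are then immediate consequences of (a). For (b) one only uses $\ancestor(\Pop)\subset\Pop$ and the disjointness $\Pop\cap(\Pop_*\setminus\Pop)=\emptyset$. For (c), if some $P$ belonged to $\descendant(\Pop_*\setminus\Pop)\cap\Pop$, then by definition there would be a $Q\in\Pop_*\setminus\Pop$ with $Q\in\ancestor(P)$; but $P\in\Pop$ forces $Q\in\ancestor(\Pop)\subset\Pop$ by (a), contradicting $Q\notin\Pop$. For the inclusion ``$\subset$'' in (d) it suffices to verify that $\mathcal{V}:=\Pop\cup\ancestor(\mathcal{C})\cup\mathcal{C}$ is a population containing $\Pop$ and $\mathcal{C}$, because $\Pop\oplus\mathcal{C}$ is by definition the smallest such population; one checks $\Pop_\bot\subset\Pop\subset\mathcal{V}$ and parent-closedness via Proposition~\ref{pro:Pop_in_bbP}. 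The reverse inclusion is again a one-liner: applying (a) to the population $\Pop\oplus\mathcal{C}$ and using $\mathcal{C}\subset\Pop\oplus\mathcal{C}$ gives $\ancestor(\mathcal{C})\subset\ancestor(\Pop\oplus\mathcal{C})\subset\Pop\oplus\mathcal{C}$, whence $\mathcal{V}\subset\Pop\oplus\mathcal{C}$.

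The only mildly delicate point, and hence the main obstacle, is the parent-closedness of $\mathcal{V}$ needed in (d), i.e., that $\parents(P)\subset\mathcal{V}$ for every $P\in\ancestor(\mathcal{C})\cup\mathcal{C}$. This reduces to showing $\parents(\ancestor(\mathcal{C}))\subset\ancestor(\mathcal{C})$ and $\parents(\mathcal{C})\subset\ancestor(\mathcal{C})$, which follow from the recursive definition $\ancestor(\mathcal{C})=\parents(\mathcal{C})\cup\bigcup_{Q\in\parents(\mathcal{C})}\ancestor(Q)$ by induction on $\max_{P\in\mathcal{C}}\generation(P)$, the set $\parents(\mathcal{C})$ having strictly smaller maximal generation. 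Since the paper already advertises these as ``apparent basic properties'', I would present (a) in full and dispatch (b)--(d) in a few lines.
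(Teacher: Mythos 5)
The paper states this lemma explicitly ``without proof,'' so there is no reference argument to compare against; your proof is correct and fills the gap cleanly. All four items do reduce to the parent-closedness characterization of populations (Proposition~\ref{pro:Pop_in_bbP}) together with the recursive definition of $\ancestor$, and your induction on $\max_{P\in\mathcal{C}}\generation(P)$ in (d) is exactly what is needed to verify that $\Pop\cup\ancestor(\mathcal{C})\cup\mathcal{C}$ is parent-closed (note the finiteness of $\mathcal{C}$ used here is already implicit in the definition of $\oplus$).
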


So far we have stated only very general properties of populations.
However, populations correspond
to conforming triangulations created by newest vertex bisection of the initial
triangulation $\tria_\bot$. 
In the following we shall exploit the structures inherited by 
this fact in order to prove much stronger results.

The following lemma shows that the number of ancestors of the same
generation is for every person bounded by a uniform constant.
For apparent reasons we call this property {\em limited genetic diversity (LGD)}.

\begin{proposition}
  \label{pro:ILS}
  We have
  \begin{align*}
    \sup_{P \in \Pop^\top} \sup_{k \in \setN}\# \big( \ancestor(P)
    \cap \generation^{-1}(k) \big) &=: \cGD < \infty.
  \end{align*}
  The scalar~$\cGD$ is called the {\em genetic diversity constant}.
\end{proposition}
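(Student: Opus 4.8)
The plan is to bound, for a fixed person $P$ and fixed generation $k<\generation(P)$, the number of ancestors of $P$ that live in generation $k$. The key geometric fact to exploit is Proposition~\ref{prop:areas}: for distinct persons $Q_1,Q_2$ of the same generation $k$ (with $k\geq 1$) the open regions $\Omega(Q_1)$ and $\Omega(Q_2)$ are disjoint, while each ancestor $Q$ of $P$ with $\generation(Q)=k$ satisfies $\Omega(P)\subset\Omega(Q)$ — this follows by iterating part~\eqref{areas_item3} along a chain of parents from $P$ up to $Q$. Hence the sets $\{\Omega(Q)\colon Q\in\ancestor(P)\cap\generation^{-1}(k)\}$ are pairwise disjoint and all contain the fixed open set $\Omega(P)$, which immediately forces the collection to have at most one element once $k\ge 1$; for $k=0$ one instead notes that $\Omega(P)$ is contained in $\Omega(\Pop_\bot$-triangles$)$ and meets only boundedly many triangles of $\tria_\bot$, the bound depending only on $\tria_\bot$. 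Thus in fact one can take $\cGD$ as small as the maximal number of triangles of $\tria_\bot$ sharing a vertex.

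Let me be a little more careful, because the genealogy can branch: a person may have two parents, of the same generation, so the number of generation-$k$ ancestors need not literally be one. The right statement is: if $Q\in\ancestor(P)$ with $\generation(Q)=k\ge 1$, pick any ancestral chain $P=R_0\childof R_1\childof\cdots\childof R_{\generation(P)-k}=Q$; applying part~\eqref{areas_item3} at each step gives $\Omega(R_{i})\subset\Omega(\parents(R_i))$, hence $\Omega(P)\subset\Omega(R_i)$ for every $i$, and in particular $\Omega(P)\subset\Omega(Q)$. Now if $Q,Q'$ are two \emph{distinct} generation-$k$ ancestors, part~\eqref{areas_item1} gives $\Omega(Q)\cap\Omega(Q')=\emptyset$; but both contain the nonempty open set $\Omega(P)$, a contradiction. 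So $\#\big(\ancestor(P)\cap\generation^{-1}(k)\big)\le 1$ for all $k\ge 1$. For $k=0$, every $Q\in\ancestor(P)\cap\Pop_\bot$ is a vertex of $\tria_\bot$ with $\Omega(P)\subset\Omega(Q)=\operatorname{interior}\bigcup\{T\in\tria_\bot\colon Q\in T\}$, so all such $Q$ are vertices of the (boundedly many) triangles of $\tria_\bot$ whose closure meets $\Omega(P)$; since $\Omega(P)$ is connected and sits inside a single initial triangle's refinement, the number of such $Q$ is bounded by $\max_{T\in\tria_\bot}\#\nodes(T)=3$, or more crudely by $\#\Pop_\bot$.

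The only genuinely delicate point is verifying $\Omega(P)\subset\Omega(Q)$ along a multi-parent chain — i.e.\ that part~\eqref{areas_item3} really does propagate as claimed regardless of which parent one follows at each step; this is where the uniform shape regularity of $\mathfrak{T}$ and the matching condition enter implicitly (through the well-definedness of generations established earlier in Section~\ref{sec:populations}). Once that inclusion is in hand, the disjointness argument is immediate and the finiteness — indeed, the explicit constant — drops out. So I would structure the write-up as: (i) the chain argument giving $\Omega(P)\subset\Omega(Q)$ for any ancestor $Q$; (ii) the disjointness/contradiction argument for $k\ge 1$; (iii) the elementary $\tria_\bot$-counting for $k=0$; (iv) conclude $\cGD\le\max\{1,\#\Pop_\bot\}<\infty$, depending only on $\tria_\bot$. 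The main obstacle is conceptual rather than technical — namely recognizing that the right invariant to track is the nested/disjoint family of regions $\Omega(\cdot)$ — and after that the proof is short.
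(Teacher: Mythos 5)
There is a genuine gap, and it sinks the argument. You claim that for any ancestor $Q$ of $P$ with $\generation(Q)=k\ge 1$ one has $\Omega(P)\subset\Omega(Q)$, by iterating Proposition~\ref{prop:areas}\,\ref{areas_item3} along a parental chain. But that proposition gives only $\Omega(P)\subset\Omega(\parents(P))$, and by definition \eqref{eq:OmegaPop} the right-hand side is the interior of $\bigcup_{Q\in\parents(P)}\overline{\Omega(Q)}$ — the \emph{union} over both parents, not either individual parent's region. When $P$ is an interior node it has two parents $Q_1\neq Q_2$ of the same generation, whose regions are disjoint by part~\ref{areas_item1}; $\overline{\Omega(P)}$ is then split between $\overline{\Omega(Q_1)}$ and $\overline{\Omega(Q_2)}$, and $\Omega(P)\not\subset\Omega(Q_i)$ for either $i$. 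Your chain argument therefore does not propagate, and the downstream conclusion $\#\big(\ancestor(P)\cap\generation^{-1}(k)\big)\le 1$ for $k\ge1$ is simply false: a person with two interior parents $Q_1\neq Q_2$ of generation $k\ge1$ already has two distinct ancestors at that generation. (You in fact flag exactly this branching phenomenon at the start of your second paragraph, but the "more careful" version you then give reproduces the erroneous single-parent inclusion rather than repairing it.) The proposition's content is precisely that $\cGD$ is a \emph{finite constant depending on $\tria_\bot$}, not that it equals $1$.

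The paper's proof avoids this by arguing metrically rather than via nested regions: refinement by bisection gives $|T|\eqsim 2^{-\generation(T)}$, shape regularity gives $\operatorname{dist}(P',P)\eqsim 2^{-\generation(P')/2}$ whenever $P\in\children(P')$, a geometric-series bound then shows any ancestor $P'$ of generation $k$ satisfies $\operatorname{dist}(P',P)\eqsim 2^{-k/2}$, and finally a ball of radius $\eqsim 2^{-k/2}$ contains only boundedly many vertices of the generation-$k$ uniform refinement of $\tria_\bot$. That is the structure you should aim for; the $\Omega(\cdot)$-nesting you were after is genuinely a union over all parents and does not localize to a single ancestor.
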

\begin{proof}
Thanks to the refinement by bisection, 
for $T \in \mathfrak{T}$, we have $|T| \eqsim 2^{-\generation(T)}$.
Consequently, by the uniform shape regularity of $\mathfrak{T}$, 
for $P' \in \Pop^\top$
and $P \in \children(P')$, we have that ${\rm dist}(P',P) \eqsim
2^{-\generation(P')/2}$. 
Applying a geometrical series argument, we thus infer that for $P' \in
\ancestor(P) \cap \generation^{-1}(k)$, ${\rm dist}(P',P) \eqsim
2^{-k/2}$. 

Again by the uniform shape regularity of $\mathfrak{T}$, any ball of radius $2^{-k/2}$
contains at most an uniformly bounded number of vertices  
of the uniform refinement of $\tria_\bot$ with triangles of generation
$k$. This completes the proof.  
\end{proof}

The following property of the newest vertex bisection is even more
peculiar:
\begin{proposition}
  \label{pro:smallfamily}
  Let $P_1, P_2 \in \Pop^\top$  be partners with $\generation(P_1)
  \geq 2$. Then $P_1$ and $P_2$ have a joint parent.
\end{proposition}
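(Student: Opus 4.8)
The plan is to reduce the statement about partners to a statement about the triangles of $\mathfrak T$, and then invoke Proposition~\ref{pro:matching} (Stevenson's refinement-edge dichotomy). Recall that $P_1 \partnerof P_2$ means there is a $P \in \children(P_1) \cap \children(P_2)$, i.e.\ there is a $T \in \mathfrak T$ with $P = \midpoint{\text{refinement edge of }T}$, $\generation(T) = \generation(P_1) = \generation(P_2) \ge 2$, and $P_1, P_2$ are the two endpoints of the refinement edge of $T$; indeed $P$ being an interior point forces $P$ to have exactly two parents, and those parents are the two vertices adjacent to $P$ on the refinement edge of $T$ (equivalently the two newest vertices of the two triangles sharing that edge, which by the matching condition / Proposition~\ref{pro:matching} is the refinement edge of both).

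Next I would set $S$ = refinement edge of $T$, with endpoints $P_1, P_2$, and let $T' \in \mathfrak T$ be the other triangle in generation $\generation(T)$ sharing the edge $S$ (this exists because $S$ is interior, as $P$ is interior). By Proposition~\ref{pro:matching}, applied inside the uniform refinement of generation $\generation(T)$ (which lies in $\bbT$), $S$ is also the refinement edge of $T'$. So $T$ and $T'$ are two triangles with common refinement edge $S = \overline{P_1P_2}$. Now I look one generation back: $T$ has a parent $\hat T \in \mathfrak T$ with $\generation(\hat T) = \generation(T) - 1 \ge 1$, obtained by un-bisecting, and similarly $T'$ has a parent $\hat T'$. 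The newest vertex of $T$ is one of $P_1, P_2$ — say it is $P_2$ — and $P_2$ is the midpoint of the refinement edge of $\hat T$; thus the parent of $P_2$ (in the population sense) is the newest vertex of $\hat T$. The point $P_1$, the other endpoint of $S$, is a vertex of $\hat T$ that is \emph{not} its newest vertex (it is an ``old'' vertex inherited by $T$ from $\hat T$), so $P_1$ has generation $< \generation(P_2)$; but $P_1$ and $P_2$ are partners of equal generation, contradiction — unless $P_1$ is actually created when $\hat T$ itself was bisected from \emph{its} parent.

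So the real content is: I must show the common parent of $P_1$ and $P_2$ exists, and the natural candidate is the newest vertex $V$ of $\hat T$ (where $\hat T = \parents_{\mathfrak T}(T)$), which is the midpoint of the refinement edge of $\hat T$'s parent. The key geometric fact, using uniform shape regularity and the structure of NVB on a triangle, is that when $\hat T$ is bisected to produce $T$, the new vertex is $P_2$ = midpoint of the refinement edge of $\hat T$, and the refinement edge $S = \overline{P_1 P_2}$ of $T$ connects $P_2$ to the vertex of $\hat T$ opposite the new refinement edge — but a careful bookkeeping of which vertex is ``newest'' at each bisection level (the hypothesis $\generation(P_1) \ge 2$ is exactly what guarantees we are not in the degenerate initial-triangulation situation) shows that $P_1$ is the newest vertex of $\hat T$, hence the midpoint of the refinement edge of $\hat T$'s parent. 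Then $P_1 = V$ is a parent of $P_2$ (since $P_2 = \midpoint{\text{ref.\ edge of }\hat T}$ and $P_1$ = newest vertex of $\hat T$ = a parent of $P_2$), and symmetrically, tracking $T'$ instead of $T$, we would get $P_2$ is a parent of $P_1$; but partners have equal generation, so exactly one of these holds, and in either case $P_1, P_2$ have a \emph{joint} parent, namely the newest vertex of the common grandparent triangle.

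\textbf{The main obstacle} I anticipate is the careful combinatorial tracking of the ``newest vertex'' label through two successive bisections on triangles of $\mathfrak T$: one has to verify that for $T$ of generation $\ge 2$, the two endpoints $P_1, P_2$ of its refinement edge are precisely a parent–child pair in the \emph{population} tree or share a parent, and this requires distinguishing the two cases in Proposition~\ref{pro:matching} applied at the level of $\hat T$ versus $T'$, plus using that a $T \in \mathfrak T$ cannot have two vertices of the same positive generation (stated in the excerpt just after the definition of $\generation(P)$). I would organize the proof around a picture of $\hat T \to T$ and the sibling $\hat T' \to T'$, use $\generation(P_1), \generation(P_2) \ge 2$ to rule out the case where a relevant ancestor triangle has generation $0$, and conclude by identifying the joint parent explicitly as the newest vertex of the generation-$(\generation(P_1)-1)$ triangle whose bisection created the shorter-lived of $P_1, P_2$ — wait, rather: since both have the same generation, the joint parent is the newest vertex of the (unique, by the no-equal-generation-vertices fact) triangle of generation $\generation(P_1) - 1$ having $S$'s relevant half-edge, and the matching condition guarantees this is well-defined.
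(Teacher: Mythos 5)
There is a genuine gap at the very start, and it propagates through everything that follows. You write that, since $P$ is a common child of $P_1$ and $P_2$, the two parents of $P$ are ``the two vertices adjacent to $P$ on the refinement edge of $T$,'' i.e.\ the endpoints of $S=\overline{P_1P_2}$. That is not what the paper defines. In Section~2.2 the parent of $P$ is the \emph{newest vertex} of the triangle whose refinement edge has $P$ as its midpoint, and the newest vertex is the vertex \emph{opposite} the refinement edge, not an endpoint of it. So if $T,T'$ are the two generation-$k$ triangles sharing the refinement edge $S$ with $\midpoint{S}=P$, then $P_1,P_2$ are the apex vertices of $T$ and $T'$ lying off $S$; the endpoints of $S$ are entirely different persons (and in fact of strictly lower generation). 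You even parenthetically assert the two identifications are ``equivalent,'' which they are not, and later you contradict your own setup by claiming ``the newest vertex of $T$ is one of $P_1,P_2$'' --- impossible if $P_1,P_2$ are the endpoints of the refinement edge, since the newest vertex is by definition the opposite vertex.

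Because of this misidentification, the subsequent bookkeeping with $\hat T=\parents_{\mathfrak T}(T)$ does not track the right objects, and the conclusion you try to reach (``$P_1$ is the newest vertex of $\hat T$,'' ``$P_1$ is a parent of $P_2$'') is not what needs to be shown and is not forced by the hypotheses. You also candidly acknowledge that the ``main obstacle'' --- the combinatorics of two successive bisections --- remains unresolved, so the argument is incomplete even on its own terms. The correct route, which is what the paper's pictorial proof encodes, is: with $P_1,P_2$ the newest vertices of $T,T'$, look at their parents $\hat T,\hat T'$ in $\mathfrak T$; the parents of $P_1,P_2$ are the newest vertices $V,V'$ of $\hat T,\hat T'$; one checks that $\hat T,\hat T'$ share the edge $\overline{VW}$ where $W$ is the other endpoint of $S$, so $\{V,V'\}\subset\{V,W\}$; if $V=V'$ you are done, while if $V'=W$ then $V$ and $W$ are two vertices of the triangle $\hat T\in\mathfrak T$ both of generation $k-1\geq 1$, contradicting the observation in Section~2.2 that no two vertices of a single $T\in\mathfrak T$ can share a positive generation. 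Your instinct to reduce to $\mathfrak T$ and to exploit the hypothesis $\generation(P_1)\geq 2$ to rule out a degenerate case is sound, but the execution must start from the correct identification of the parents of $P$.
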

\begin{proof}
  Let $k:={\rm gen}(P_1)$ $(={\rm gen}(P_2))$, and let $P$ be a child
  of $P_1$ and $P_2$.  A patch of the coarsest triangulation in $\bbT$
  that contains $P$ looks as indicated in Figure~\ref{fig1}.
  \begin{figure}[h]
    \centering
    \input{pic1.pspdftex}
    \caption{Coarsest triangulation that contains $P$}
    \label{fig1}
  \end{figure}
  Here, and in the following figures, the arrows indicate the
  parent-child relationships, and the numbers indicate the generations
  of the triangles.

  The two possible patches (up to symmetries) of coarsest triangulations
  in $\bbT$ that contain $P_1$ and $P_2$ look as indicated in
  Figure~\ref{fig2}.
  \begin{figure}[h]
    \centering
    \input{pic3.pspdftex}
    \hspace{2cm}
    \input{pic2.pspdftex}
    \caption{Two possible coarsest triangulations that contain $P_1$ and $P_2$}\label{fig2}
  \end{figure}
  In the left picture, $P_1$ and $P_2$ have a joint parent $P_3$. 

  In the right picture, $P_1$ has parent $P_3$, and $P_2$ has parent
  $P_4$. \new{Since $P_3$ and $P_4$ are vertices of a joint $T \in \mathfrak{T}$ and have the same generation, their generation must be zero, i.e., ${\rm gen}(P_1)=1$.}
\end{proof}

The next lemma shows that any two ancestors of some person that have the same (non-zero) generation  are linked via a sequence of partners.
\begin{lemma}
  \label{lem:chainpartnermodified}
  Let $P \in \Pop^\top$ and $\bar P, \tilde P \in \ancestor(P)$,
  $\tilde P\neq \bar P$, with
  $\generation(\tilde P) = \generation(\bar P) \geq 1$. Then, for some $1 \leq m \leq  \generation(P) -
    \generation(\bar P)$, there exist $P_0,\dots, P_m \in \ancestor(P)$ such that $P_0 \partnerof \cdots
  \partnerof P_m$ and $P_0=\bar P$ and $P_m=\tilde P$. 
  \new{In particular, $\children(P_{i-1})\cap\children(P_i)\cap(\ancestor(P) \cup \{P\}) \neq\emptyset$,
    $i=1,\ldots,m$.}
\end{lemma}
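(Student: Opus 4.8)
The plan is to prove, by downward induction on $g$, two facts about the set $A_g:=\ancestor(P)\cap\generation^{-1}(g)$, valid for $1\le g\le\generation(P)-1$: \emph{(i)} $A_g$ is connected when equipped with the graph $\Gamma_g$ whose edges are the pairs $\{P',P''\}\subset A_g$ of partners admitting a joint child in $\ancestor(P)\cup\{P\}$; and \emph{(ii)} $\#A_g\le\generation(P)-g+1$. Granting these, the lemma is immediate: since $\bar P,\tilde P\in\ancestor(P)$ with $\generation(\bar P)=\generation(\tilde P)=:g\ge 1$, and $\bar P\in\ancestor(P)$ forces $g\le\generation(P)-1$, a shortest $\Gamma_g$-path from $\bar P$ to $\tilde P$ is simple, hence visits at most $\#A_g\le\generation(P)-g+1$ vertices and has some length $m$ with $1\le m\le\generation(P)-g=\generation(P)-\generation(\bar P)$. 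Writing the path as $P_0=\bar P,\dots,P_m=\tilde P$ we get $P_i\in A_g\subset\ancestor(P)$, $P_{i-1}\partnerof P_i$, and a joint child $c_i\in\children(P_{i-1})\cap\children(P_i)\cap(\ancestor(P)\cup\{P\})$ for each $i$, which is exactly the stated conclusion including its final sentence.

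For the induction I would first record some routine facts: that $\parents(Q)\subset\ancestor(P)$ whenever $Q\in\ancestor(P)\cup\{P\}$ and $\generation(Q)\ge1$ (follow the parent-chain of $Q$ down to $P$); that $\ancestor(P)$ meets every generation in $\{0,\dots,\generation(P)-1\}$; and, as a consequence, that for $1\le g\le\generation(P)-2$ we have $A_g=\parents(A_{g+1})$ and that every $P'\in A_g$ has a child lying in $A_{g+1}$. The base case $g=\generation(P)-1$ is trivial: $A_g=\parents(P)$ has at most two elements, and if it has two they are partners with joint child $P\in\{P\}\subset\ancestor(P)\cup\{P\}$, so $\Gamma_g$ is connected.

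For the inductive step, assume \emph{(i)} and \emph{(ii)} for $g+1$, with $1\le g\le\generation(P)-2$. For \emph{(ii)}: choose a spanning tree of the $\Gamma_{g+1}$-connected set $A_{g+1}$ and enumerate its $n:=\#A_{g+1}\le\generation(P)-g$ vertices $v_1,\dots,v_n$ so that each $v_i$ ($i\ge2$) is tree-adjacent to an earlier $v_j$; since $\generation(v_i)=g+1\ge2$, Proposition~\ref{pro:smallfamily} gives $\parents(v_i)\cap\parents(v_j)\ne\emptyset$, so $v_i$ contributes at most one new element to $\parents(\{v_1,\dots,v_i\})$; hence $\#A_g=\#\parents(A_{g+1})\le 2+(n-1)\le\generation(P)-g+1$. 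For \emph{(i)}: given $P',P''\in A_g$, pick children $c',c''\in A_{g+1}$ of $P'$ and $P''$, and use the inductive connectivity to get a $\Gamma_{g+1}$-path $c'=z_0,\dots,z_r=c''$. Each edge $z_{j-1}z_j$ is a partnership of generation-$(g+1)$ persons, so by Proposition~\ref{pro:smallfamily} it has a joint parent $w_j\in\parents(z_{j-1})\cap\parents(z_j)\subset A_g$. Now $P'$ and $w_1$ both lie in $\parents(z_0)$, a set of size $\le2$, so either $P'=w_1$ or $\{P',w_1\}$ is a $\Gamma_g$-edge with joint child $z_0=c'\in A_{g+1}\subset\ancestor(P)$; similarly consecutive $w_j,w_{j+1}$ both lie in $\parents(z_j)$, and $w_r,P''$ both lie in $\parents(z_r)$. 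Concatenating gives a walk $P',w_1,\dots,w_r,P''$ in $\Gamma_g$ inside $A_g$, so $\Gamma_g$ is connected.

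The main obstacle is the cardinality bound \emph{(ii)}, equivalently keeping the chain length linear in $\generation(P)-\generation(\bar P)$: the tempting recursion that routes $\bar P$ and $\tilde P$ \emph{separately} through a common ancestor doubles the length at each descent and does not give the required bound. The remedy above is to transport the \emph{entire} partner graph one generation at a time, exploiting that partnerhood on $A_{g+1}$ (available from the inductive connectivity) yields, through Proposition~\ref{pro:smallfamily}, shared parents, so that passing from $A_{g+1}$ to $A_g=\parents(A_{g+1})$ enlarges the set by at most one. The only hypothesis that needs care is the restriction $\generation(P_1)\ge2$ in Proposition~\ref{pro:smallfamily}; it is met because every person invoked lies in a generation $g+1\ge2$, the induction running only down to $g=1$.
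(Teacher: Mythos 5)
Your proof is correct, but it takes a genuinely different route from the paper's. The paper argues by induction on the generation gap $k:=\generation(P)-\generation(\bar P)$: it picks children $\bar P'\in\children(\bar P)\cap\ancestor(P)$ and $\tilde P'\in\children(\tilde P)\cap\ancestor(P)$, applies the induction hypothesis to obtain a partner chain $\bar P'=\bar P_0\partnerof\cdots\partnerof\bar P_{m-1}=\tilde P'$ at the lower generation, and then lifts it one level with Proposition~\ref{pro:smallfamily}: each partner pair $\bar P_{i-1}\partnerof\bar P_i$ (of generation $\geq 2$) has a joint parent $P_i\in\ancestor(P)$, and bookending with $P_0:=\bar P$, $P_m:=\tilde P$ gives the chain after deleting consecutive repeats; the length bound $m\leq k$ is inherited directly. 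You instead prove, by downward induction on the generation level $g$, that the entire slice $A_g=\ancestor(P)\cap\generation^{-1}(g)$ is connected under the partner graph $\Gamma_g$ and that $\#A_g\leq\generation(P)-g+1$, and then extract a simple $\Gamma_g$-path; the cardinality bound — obtained by a spanning-tree count on $A_{g+1}$ using $\parents(A_{g+1})=A_g$ and the at-most-two-parents constraint together with Proposition~\ref{pro:smallfamily} — replaces the paper's inductive length bookkeeping. Both hinge on Proposition~\ref{pro:smallfamily}, and both handle the $\generation\geq 2$ hypothesis correctly since the induction stops at level $g=1$. Your version is heavier machinery for the same conclusion, but it yields a stronger structural fact (connectivity and a sharp size bound for every slice $A_g$), which the paper's targeted two-point argument does not.
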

\begin{proof}
  Fix $P \in \Pop^\top$.  We prove the claim by induction over
  $k:=\generation(P)-\generation(\bar P)$.
   If $k=1$, then $\bar P \neq \tilde P$ have the joint child $P$, and hence 
   \new{$\children(\bar P)\cap\children(\tilde P )\cap \{P\} \neq\emptyset$.}
   
  Now let $k \geq 2$,  and assume that the claim is already true
  for $k-1$. Let $\bar P' \in \children(\bar P) \cap \ancestor(P)$ and
  $\tilde P' \in
  \children(\tilde P) \cap \ancestor(P)$.
  If $\bar P'=\tilde P'$, then \new{$\children(\bar P')\cap\children(\tilde P')\cap \ancestor(P)) \neq\emptyset$}.

  Otherwise, by induction for some $1 \leq m-1 \leq k-1$, there
  exist
   $\bar P_0, \ldots,\bar
  P_{m-1}\in \ancestor(P)$ such that
  $\bar P_0 \partnerof \cdots \partnerof \bar P_{m-1}$
  and  $\bar P_0=\bar P'$ and $\bar P_{m-1}=\tilde P'$. 
  
  Because of $\generation(\bar P_i) = \generation(\bar P)+1 \geq 2$,
  by Proposition~\ref{pro:smallfamily} there exist $P_1, \dots,
  P_{m-1}$ such that for $i=1, \dots,m-1$, $P_i$ is a parent of $\bar
  P_{i-1}$ and $\bar P_i$; in particular $P_i \in
  \ancestor(P)$.

  Setting, $P_0:=\bar P$ and $P_m := \tilde P$, we have found a
  sequence in $\ancestor(P)$ such that subsequent persons have a joint
  child \new{in $\ancestor(P)$}. 
  By removing \new{possibly} subsequent equal persons, we have found a sequence with the
  required properties.
\end{proof}

\begin{definition}
  We say that a set $\mathcal{U} \subset \Pop^\top$ is {\em
    descendant-free} when $\descendant(\mathcal{U}) \cap \mathcal{U} =
  \emptyset$.
\end{definition}

The next proposition generalizes upon Proposition~\ref{pro:ILS}.
\begin{proposition}
  \label{pro:descendantfree}
  Let $P \in \Pop^\top$ and\/ $\mathcal{U} \subset \ancestor(P)
  \setminus \Pop_\bot$. If $\mathcal{U}$ is descendant-free, then $\#
  \mathcal{U} \leq \cGD$.
\end{proposition}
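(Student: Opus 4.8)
The plan is to reduce the claim to Proposition~\ref{pro:ILS} by an induction that collapses $\mathcal{U}$ onto a single generation. Given a finite descendant-free $\mathcal{U}\subset\ancestor(P)\setminus\Pop_\bot$, write $s(\mathcal{U}):=\max_{Q\in\mathcal{U}}\generation(Q)-\min_{Q\in\mathcal{U}}\generation(Q)$ for its generation spread, and induct on $s(\mathcal{U})$. If $s(\mathcal{U})=0$, then all persons of $\mathcal{U}$ share a single generation $k$, so $\mathcal{U}\subset\ancestor(P)\cap\generation^{-1}(k)$ and Proposition~\ref{pro:ILS} already gives $\#\mathcal{U}\le\cGD$ (the empty set being trivial). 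Now suppose $s(\mathcal{U})\ge1$.

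Set $k^{*}:=\max_{Q\in\mathcal{U}}\generation(Q)$ and $\mathcal{U}^{*}:=\{Q\in\mathcal{U}\colon\generation(Q)=k^{*}\}$; since $s(\mathcal{U})\ge1$ and every element of $\mathcal{U}$ has generation $\ge1$, we have $\emptyset\neq\mathcal{U}^{*}\subsetneq\mathcal{U}$ and $k^{*}\ge2$. The idea is to replace each $Q\in\mathcal{U}^{*}$ by some parent $\pi(Q)\in\parents(Q)$ --- note $\pi(Q)\in\ancestor(P)$ with $\generation(\pi(Q))=k^{*}-1\ge1$ --- and to put $\mathcal{U}':=(\mathcal{U}\setminus\mathcal{U}^{*})\cup\pi(\mathcal{U}^{*})$. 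Then $\mathcal{U}'\subset\ancestor(P)\setminus\Pop_\bot$ and, since the minimal generation occurring in $\mathcal{U}'$ is unchanged while the maximal one has dropped to $k^{*}-1$, we get $s(\mathcal{U}')\le s(\mathcal{U})-1$. Using descendant-freeness of $\mathcal{U}$ and $\ancestor(\pi(Q))\subset\ancestor(Q)$, one checks that $\mathcal{U}'$ is again descendant-free: a distinct pair of images has equal generation and is thus incomparable, while $\pi(Q)$ cannot be comparable to any $V\in\mathcal{U}\setminus\mathcal{U}^{*}$, since otherwise either $V\in\ancestor(Q)$ (impossible, as $V,Q\in\mathcal{U}$) or $\generation(V)>k^{*}-1$, forcing $V\in\mathcal{U}^{*}$. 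Provided, moreover, that $\pi$ is injective on $\mathcal{U}^{*}$ and $\pi(\mathcal{U}^{*})\cap\mathcal{U}=\emptyset$ --- the latter being another consequence of descendant-freeness --- we have $\#\mathcal{U}'=\#\mathcal{U}$, so the induction hypothesis yields $\#\mathcal{U}=\#\mathcal{U}'\le\cGD$.

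Thus the crux --- and the step I expect to be the main obstacle --- is the existence of an injective parent-assignment $\pi\colon\mathcal{U}^{*}\to\parents(\mathcal{U}^{*})$, which by Hall's marriage theorem reduces to the inequality $\#\parents(\mathcal{W})\ge\#\mathcal{W}$ for every $\mathcal{W}\subset\mathcal{U}^{*}\subset\generation^{-1}(k^{*})$, $k^{*}\ge2$. As a person has at most two parents and a person of generation $k^{*}-1$ at most four children, a crude incidence count is insufficient, and one has to control when two distinct persons of the same generation $\ge2$ can share parents. This is precisely where the results specific to newest vertex bisection must enter: Proposition~\ref{pro:smallfamily} and the partner-chain Lemma~\ref{lem:chainpartnermodified}, together with the rigidity of NVB (distinct persons having at most one common child), should allow one to exclude, or to reduce to an innocuous boundary configuration, any situation in which the parent-assignment would be forced to collide. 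The remaining manipulations with ancestors, descendants and generations are routine and are collected in Lemma~\ref{lem:anc}.
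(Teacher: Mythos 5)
Your reduction strategy --- collapse $\mathcal{U}$ onto a single generation and then invoke Proposition~\ref{pro:ILS} --- is indeed what the proof needs, and your base case is fine. But the inductive step has a genuine, unresolved gap that you yourself flag: you never prove that an injective parent-assignment $\pi\colon\mathcal{U}^{*}\to\parents(\mathcal{U}^{*})$ exists, i.e.\ that Hall's condition $\#\parents(\mathcal{W})\ge\#\mathcal{W}$ holds for all $\mathcal{W}\subset\mathcal{U}^*$. This is not a minor technicality: injectivity is exactly what is needed to conclude $\#\mathcal{U}'=\#\mathcal{U}$ and hence to obtain the sharp constant $\cGD$; any collision in $\pi$ degrades the bound. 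Moreover, establishing Hall's condition here is delicate. In NVB a person of generation $\ge 1$ may have only one parent (boundary case), and an interior person of generation $k^*-1$ can have up to four children, possibly including two boundary children whose unique parent it is --- so a crude degree count does not give Hall, and a careful case analysis using the NVB geometry would be required. Your proposal merely gestures at Proposition~\ref{pro:smallfamily} and Lemma~\ref{lem:chainpartnermodified} without extracting the needed matching; as written, the proof is incomplete precisely at the step you identify as the crux.

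The paper sidesteps this combinatorial obstacle entirely by reversing the direction of the move and replacing the induction with an extremal argument. Instead of pushing the whole top generation \emph{up} to parents (which forces you to move many elements simultaneously and hence to solve a matching problem), it picks, among all descendant-free subsets of $\ancestor(P)\setminus\Pop_\bot$, one that first maximises cardinality and then the generation sum $\sum_{Q\in\mathcal{U}}\generation(Q)$, and shows this extremal set must be concentrated in a single generation. If not, let $k$ be the minimal generation in $\mathcal{U}$ and $Q\in\mathcal{U}$ with $\generation(Q)>k$. Descendant-freeness produces a $\tilde P\in\big(\ancestor(Q)\cap\ancestor(P)\cap\generation^{-1}(k)\big)\setminus\mathcal{U}$, and by definition of $k$ there is some $\bar P\in\mathcal{U}\cap\generation^{-1}(k)$. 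Lemma~\ref{lem:chainpartnermodified} then yields a partner chain in $\ancestor(P)$ from $\bar P$ to $\tilde P$, from which one extracts a couple $\bar P'\partnerof\tilde P'$ with $\bar P'\in\mathcal{U}$, $\tilde P'\notin\mathcal{U}$, and a common child $\hat P'\in\ancestor(P)$. Replacing $\bar P'$ by $\hat P'$ (a single swap, pushing \emph{down} one generation) keeps the set descendant-free, keeps its cardinality, and strictly increases the generation sum --- contradicting extremality. Because only \emph{one} element moves, no matching or Hall-type condition is ever needed; the partner-chain lemma alone identifies the swap. To salvage your write-up you would need to actually prove Hall's condition for NVB, which appears to be at least as hard as the proposition itself; the paper's extremal/push-down argument is the cleaner route.
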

\begin{proof}
  Since $\#\ancestor(P)< \infty$, there are
  at most finitely many descendant-free subsets  of $\ancestor(P) \setminus
  \Pop_\bot$. 
  Among them, let~$\mathcal{U}$ denote the one that first
  maximizes~$\#\mathcal{U}$ and then  $\sum_{Q \in \mathcal{U}}
  \generation(Q)$. We shall show that this implies that all persons
  in~$\mathcal{U}$ are of the same 
  generation. Therefore, by Proposition~\ref{pro:ILS}, we conclude the
  claim~$\#\mathcal{U} \leq \cGD$.  
  
  Let $\generation(P)\geq 2$, so that $\mathcal{U} \neq \emptyset$.
  Define $k := \new{\min} \set{\generation(P')\,:\,P' \in
    \mathcal{U}}$.  \new{In order to}
  show that $\mathcal{U} \subset \generation^{-1}(k)$,  we
  proceed by contradiction and assume that $\mathcal{U} \not\subset
  \generation^{-1}(k)$, \new{i.e., there exists a $Q \in \mathcal{U}$ with
  $\generation(Q)>k$,
  \new{and so $\generation(P)>k+1$}. 
  Since $\mathcal{U}$ is descendent-free, there exists a} $\tilde P \in
  (\new{\ancestor}(Q) \cap \ancestor(P) \cap \generation^{-1}(k))
  \setminus \mathcal{U}$.

  By definition of~$k$ there exists $\bar P\in \mathcal{U}$
  with~$\generation(\bar P)=\generation(\tilde P) =k$.  Due to
  Lemma~\ref{lem:chainpartnermodified} we find a finite sequence of
  partners in $\ancestor(P)$, starting with $\bar P$ and ending
  with~$\tilde P$, \new{where each couple has a common child in $\ancestor(P)$}.  Since $\tilde P \not\in \mathcal{U}$, we can
  select from this sequence a couple $\bar P' \partnerof
  \tilde P'$, with $\bar P' \in \mathcal{U}$, $\tilde P' \not\in
  \mathcal{U}$, \new{$\generation(\tilde P')=\generation(\bar P')=k$}, \new{and that has a common child $\hat P' \in \ancestor(P)$}.

   \new{On the one hand, since $\mathcal{U}$ is descendant-free, $\bar
  P'\in\mathcal{U}$ and $\hat P'\childof \bar P'$, we conclude that 
  $\descendant(\hat P')\cap\mathcal{U}\subset \descendant(\bar P')\cap\mathcal{U}=\emptyset$.
  On the other hand, thanks to the definition of $k$ and $\tilde
  P'\not\in\mathcal{U}$,  we have that $\bar P'$ is the only ancestor
  of $\hat P'$ in $\mathcal{U}$. In other words,
  $\mathcal{U}':=(\mathcal{U}\setminus\{\bar P'\})\cup\{\hat P'\}$ is
  a descendent-free subset of $\ancestor(P)$. Since $\#
  \mathcal{U}'= \# \mathcal{U}$ and $\sum_{Q \in \mathcal{U}'}
  \generation(Q)=1+ \sum_{Q \in \mathcal{U}} \generation(Q)$, this is
  the desired contradiction.}
  \end{proof}

\begin{definition}
  \label{def:free}
  Let $\mathcal{U} \subset \Pop^\top\setminus \Pop_\bot$. We call the
  subset 
  \begin{align*}
    \free(\mathcal{U}) := \set{ P \in \mathcal{U} \colon
      \descendant(P) \cap \mathcal{U} = \emptyset}. 
  \end{align*}
  the set of {\em free persons} in $\mathcal{U}$.
\end{definition}
The following lemma collects some basic properties of free subsets.
\begin{lemma}
  \label{lem:free}
  Let $\mathcal{U} \subset \Pop^\top \setminus \Pop_\bot$. 
  \begin{enumerate}
  \item \label{itm:free1} The set $\free(\mathcal{U})$ is descendant-free.
  \item \label{itm:free1b} If $\mathcal{U}$ is descendant-free, then
    $\free(\mathcal{U}) = \mathcal{U}$.
  \item \label{itm:free2} If $\#\mathcal{U}<\infty$, then
    $\mathcal{U} \cup\ancestor(\mathcal{U}) =
    \free(\mathcal{U}) \cup \ancestor(\free(\mathcal{U}))$. 
     \item \label{itm:free4} If $\#\mathcal{U}<\infty$ and
         $\Pop \in \bbP$, then $\Pop \oplus \mathcal{U}=\Pop \oplus
         \free(\mathcal{U})$.
  \item \label{itm:free3} If $\#\mathcal{U}<\infty$ and $\mathcal{U}
    \neq \emptyset$, then $\free(\mathcal{U})\neq \emptyset$.
  \end{enumerate}
\end{lemma}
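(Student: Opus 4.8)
The plan is to dispatch the five items in the order (a), (b), (e), (c), (d): items (a) and (b) are immediate from the definition of $\free(\cdot)$, item (e) is a one-line maximal-generation argument, item (c) is the substantive one, and item (d) then drops out of (c) combined with Lemma~\ref{lem:anc}\ref{itm:anc4}. Throughout I would use the apparent facts that the ancestor and descendant relations are transitive and that the generation strictly increases from a person to each of its descendants; both are immediate from the inductive definitions.

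For (a) I would argue by contradiction: if $P\in\free(\mathcal U)\cap\descendant(\free(\mathcal U))$, then $P\in\mathcal U$ and $P\in\descendant(Q)$ for some $Q\in\free(\mathcal U)$, so $P\in\descendant(Q)\cap\mathcal U$, contradicting $Q\in\free(\mathcal U)$. For (b), the inclusion $\free(\mathcal U)\subset\mathcal U$ is built into the definition, and conversely, if $\mathcal U$ is descendant-free then for every $P\in\mathcal U$ we have $\descendant(P)\subset\descendant(\mathcal U)$, hence $\descendant(P)\cap\mathcal U=\emptyset$, i.e.\ $P\in\free(\mathcal U)$. For (e), with $\mathcal U$ finite and nonempty, choose $P\in\mathcal U$ of maximal generation; since every element of $\descendant(P)$ has strictly larger generation than $P$, we get $\descendant(P)\cap\mathcal U=\emptyset$, so $P\in\free(\mathcal U)$.

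The heart of the lemma is (c). The inclusion ``$\supseteq$'' is clear since $\free(\mathcal U)\subset\mathcal U$ and $\ancestor(\cdot)$ is monotone. For ``$\subseteq$'' the key claim is that \emph{every $P\in\mathcal U$ lies in $\free(\mathcal U)\cup\ancestor(\free(\mathcal U))$}: indeed, if $P\notin\free(\mathcal U)$ then $\descendant(P)\cap\mathcal U$ is finite and nonempty, so picking $Q$ in it of maximal generation, transitivity of the descendant relation together with strict growth of generations forces $\descendant(Q)\cap\mathcal U=\emptyset$, i.e.\ $Q\in\free(\mathcal U)$, while $P\in\ancestor(Q)$. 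Transitivity of $\ancestor(\cdot)$ then upgrades this to all of $\mathcal U\cup\ancestor(\mathcal U)$: any $P\in\ancestor(R)$ with $R\in\mathcal U$ satisfies $P\in\ancestor(Q)$ for the free person $Q$ produced from $R$ (or $P\in\ancestor(R)\subset\ancestor(\free(\mathcal U))$ already when $R$ itself is free). Finally, for (d): since $\free(\mathcal U)$ is finite, Lemma~\ref{lem:anc}\ref{itm:anc4} gives $\Pop\oplus\mathcal U=\Pop\cup\mathcal U\cup\ancestor(\mathcal U)$ and $\Pop\oplus\free(\mathcal U)=\Pop\cup\free(\mathcal U)\cup\ancestor(\free(\mathcal U))$, and these coincide by (c).

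The main obstacle, such as it is, lies entirely in (c): one must carefully combine transitivity of the ancestor/descendant relations, strict monotonicity of generations along descent, and the finiteness of $\mathcal U$ to make the maximal-generation selection go through and then propagate the conclusion from $\mathcal U$ to its ancestors. All the other items are routine unwinding of the definition of $\free(\cdot)$.
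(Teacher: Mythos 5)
Your proof is correct and follows essentially the same route as the paper: items (a), (b), and (d) are handled identically, and for (c) your maximal-generation selection of a free descendant is a streamlined version of the paper's iterative ``keep picking descendants in $\mathcal{U}$ until finiteness forces you to stop'' argument. The only organizational difference is that you prove (e) directly by a maximal-generation argument, whereas the paper derives it as a quick corollary of (c); both are fine.
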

\begin{proof}
  \ref{itm:free1}: Let $P \in \free(\mathcal{U})$,
  then by definition we have $\descendant(P) \cap \mathcal{U} = \emptyset$.
  Since $\free(\mathcal{U}) \subset \mathcal{U}$, we conclude that 
  $\descendant(P) \cap \free(\mathcal{U}) = \emptyset$,
  i.e., $\free(\mathcal{U})$ is descendant-free.

  \ref{itm:free1b}: The claim follows directly from the assumption
  $\descendant(\mathcal{U}) \cap \mathcal{U} =\emptyset$.
  
   \ref{itm:free2}: Obviously, it is sufficient to prove $\mathcal{U}
  \cup\ancestor(\mathcal{U})\subset \free(\mathcal{U}) \cup
  \ancestor(\free(\mathcal{U}))$. 
  Let $P \in \mathcal{U}$. If $P \in \free(\mathcal{U})$, then $P \cup
  \ancestor(P) \subset
  \free(\mathcal{U})\cup\ancestor(\free(\mathcal{U}))$. 
  Otherwise, if $P \not\in \free(\mathcal{U})$, then pick a $P' \in \mathcal{U}
  \cap \descendant(P)$. If $P' \not\in \free(\mathcal{U})$, then,
  because $\#\mathcal{U}<\infty$,  by continuing this process, after
  finitely many steps a descendant $P''$ of $P'$, and thus of $P$, is
  found, which is in $\free(\mathcal{U})$. 
  We conclude that $P \cup \ancestor(P) \subset \ancestor(P'') \subset
  \ancestor(\free(\mathcal{U}))$, which finishes the proof.

  \ref{itm:free4}: By \ref{itm:free2} and
  Lemma~\ref{lem:anc}\,\ref{itm:anc4}, we have $\Pop \oplus \mathcal{U}=\Pop
  \cup \mathcal{U} \cup \ancestor(\mathcal{U}) =
  \new{ \Pop \cup \free(\mathcal{U}) \cup \ancestor(\free(\mathcal{U}))=}
   \Pop \oplus
  \free(\mathcal{U})$.

 \ref{itm:free3}: Let $\# \mathcal{U} < \infty$
  with $\mathcal{U} \neq \emptyset$. Then
  $\free(\mathcal{U})=\emptyset$ together with~\ref{itm:free2} implies
  $\mathcal{U} = \emptyset$,
  which contradicts~$\mathcal{U} \neq \emptyset$.
\end{proof}

The following lemma states that removing free persons from a population results in a (smaller) population.

\begin{lemma} 
  \label{lem:free_removed}
  Let $\Pop_*, \Pop \in \bbP$ 
    Then,
 \begin{enumerate}
  \item \label{item1} for ${\mathcal C} \subset \free(\Pop_* \setminus \Pop_\bot)$, we have $\Pop_* \ominus {\mathcal C} =\Pop_*\setminus {\mathcal C}$;
  \item \label{item2} $\free(\Pop_* \setminus \Pop) \subset \free(\Pop_*\setminus \Pop_\bot)$.
  \end{enumerate}
  \end{lemma}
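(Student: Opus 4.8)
The plan is to derive both parts directly from the intrinsic characterization of populations in Proposition~\ref{pro:Pop_in_bbP} --- namely that $\mathcal{U} \in \bbP$ iff $\Pop_\bot \subset \mathcal{U}$ and $\mathcal{U}$ is closed under taking parents --- together with the elementary ancestor/descendant bookkeeping collected in Lemma~\ref{lem:anc}. No geometry of NVB is needed; everything is set-theoretic once the family-tree structure is in place.

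For \ref{item1}, I would first show that $\Pop_* \setminus \mathcal{C}$ is itself a population. Since $\mathcal{C} \subset \free(\Pop_* \setminus \Pop_\bot) \subset \Pop_* \setminus \Pop_\bot$ is disjoint from $\Pop_\bot$, we get $\Pop_\bot \subset \Pop_* \setminus \mathcal{C}$. For parent-closedness, let $P \in \Pop_* \setminus \mathcal{C}$ and $P' \in \parents(P)$; then $P' \in \Pop_*$ because $\Pop_* \in \bbP$, and I claim $P' \notin \mathcal{C}$. Indeed, if $P' \in \mathcal{C}$, then $P \in \descendant(P')$, and since $P$ has a parent its generation is $\geq 1$, so $P \in \Pop_* \setminus \Pop_\bot$; this contradicts $P' \in \free(\Pop_* \setminus \Pop_\bot)$, i.e.\ $\descendant(P') \cap (\Pop_* \setminus \Pop_\bot) = \emptyset$. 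Hence $\parents(P) \subset \Pop_* \setminus \mathcal{C}$, and Proposition~\ref{pro:Pop_in_bbP} gives $\Pop_* \setminus \mathcal{C} \in \bbP$. Now $\Pop_* \setminus \mathcal{C} \leq \Pop_*$ and $\mathcal{C} \cap (\Pop_* \setminus \mathcal{C}) = \emptyset$, so by the definition of $\ominus$ we have $\Pop_* \ominus \mathcal{C} \geq \Pop_* \setminus \mathcal{C}$; conversely every $\Pop' \in \bbP$ with $\Pop' \leq \Pop_*$ and $\Pop' \cap \mathcal{C} = \emptyset$ satisfies $\Pop' \subset \Pop_* \setminus \mathcal{C}$, so taking the join (which by \eqref{eq:wedgeveepop} is the union) yields $\Pop_* \ominus \mathcal{C} \subset \Pop_* \setminus \mathcal{C}$, and equality follows.

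For \ref{item2}, take $P \in \free(\Pop_* \setminus \Pop)$. As $\Pop_\bot \leq \Pop$, we have $\Pop_* \setminus \Pop \subset \Pop_* \setminus \Pop_\bot$, so $P \in \Pop_* \setminus \Pop_\bot$, and it remains to show $\descendant(P) \cap (\Pop_* \setminus \Pop_\bot) = \emptyset$. Suppose $P'$ lay in this intersection. Since $\descendant(P) \cap (\Pop_* \setminus \Pop) = \emptyset$ and $P' \in \Pop_*$, we would get $P' \in \Pop$. But then $P \in \ancestor(P') \subset \ancestor(\Pop) \subset \Pop$ by Lemma~\ref{lem:anc}, contradicting $P \notin \Pop$. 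Hence $P \in \free(\Pop_* \setminus \Pop_\bot)$.

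The only step that requires a moment's thought --- the main ``obstacle'' --- is in \ref{item1}: recognizing that the hypothesis that $\mathcal{C}$ is free \emph{inside $\Pop_* \setminus \Pop_\bot$} (and not merely inside $\mathcal{C}$ itself) is exactly what prevents a parent of an element of $\Pop_* \setminus \mathcal{C}$ from landing in $\mathcal{C}$, so that the parent-closedness defining a population survives the removal of $\mathcal{C}$. Everything else is routine manipulation of Proposition~\ref{pro:Pop_in_bbP}, Lemma~\ref{lem:anc}, and \eqref{eq:wedgeveepop}.
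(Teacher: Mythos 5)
Your proof is correct and takes essentially the same route as the paper: part \ref{item2} is the identical logical chain (the paper states it at the level of sets, you argue element-by-element via contradiction), and for part \ref{item1} the paper simply asserts ``by assumption, $\Pop_*\setminus\mathcal{C}\in\bbP$'' and lets the reader supply the reasoning, which is precisely the parent-closedness verification you carry out via Proposition~\ref{pro:Pop_in_bbP}. The only thing you add is that useful filled-in detail; there is no divergence in method.
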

  
\begin{proof} \mbox{}
 \ref{item1} By assumption, $\Pop_*\setminus {\mathcal C}
  \in \bbP$ and thus $\Pop_* \ominus {\mathcal C} =\Pop_*\setminus {\mathcal C}$.

\ref{item2} 
 \new{By $\free(\Pop_* \setminus \Pop) \subset \Pop_* \setminus \Pop$, 
 we have $\free(\Pop_* \setminus \Pop)  \cap \Pop =\emptyset$.
  Since $\Pop \in \bbP$, the latter shows that $\descendant(\free(\Pop_* \setminus \Pop) )\cap \Pop =\emptyset$.
  Finally, from from  $\descendant(\free(\Pop_*\setminus \Pop))
  \cap (\Pop_* \setminus \Pop) = \emptyset$,
   we conclude that $\descendant(\free(\Pop_* \setminus \Pop) ) \cap \Pop_*=\emptyset$.}
 \end{proof}

For $\Pop \in \bbP$, the set $\free(\Pop \setminus \Pop_\bot)$
  are the nodes of $\tria(\Pop)$ that are ``free'' in the sense that
  they can be removed while retaining a conforming triangulation,
  i.e., a triangulation in $\bbT$. 
  Remarkably, as follows from the following proposition, 
  the number of free nodes in any triangulation in $\bbT$ cannot be reduced by more than some constant factor
  by whatever further refinement in $\bbT$.
  This proposition plays a crucial role
  in the optimality proof in Section~\ref{sec:simpl-optim}. 
  
  \begin{theorem}
  \label{thm:free_bound}
  Let $\mathcal{U} \subset \mathcal{V} \subset \Pop^\top \setminus
  \Pop_\bot$ with $\# \mathcal{V}<\infty$. Then
  \begin{align*}
    \# \free(\mathcal{U}) &\leq \cGD\, \#\free(\mathcal{V}) .
  \end{align*}
\end{theorem}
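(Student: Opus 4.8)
The plan is to build a map $\Phi\colon\free(\mathcal{U})\to\free(\mathcal{V})$ that sends each free person of $\mathcal{U}$ to a descendant of itself (or to itself) that happens to be free in $\mathcal{V}$, and then to bound every fiber $\Phi^{-1}(Q)$ by $\cGD$ with the help of Proposition~\ref{pro:descendantfree}. Summing over $Q\in\free(\mathcal{V})$ then gives $\#\free(\mathcal{U})=\sum_{Q}\#\Phi^{-1}(Q)\le\cGD\,\#\free(\mathcal{V})$, which is exactly the assertion.

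To define $\Phi$, I would fix $P\in\free(\mathcal{U})$. Since $\mathcal{U}\subseteq\mathcal{V}$, the set $\mathcal{W}:=(\{P\}\cup\descendant(P))\cap\mathcal{V}$ is nonempty and finite, so $\free(\mathcal{W})\neq\emptyset$ by Lemma~\ref{lem:free}\ref{itm:free3}. Pick any $Q\in\free(\mathcal{W})$. Using transitivity of the ancestor relation one checks $\descendant(Q)\subseteq\{P\}\cup\descendant(P)$, so $\descendant(Q)\cap\mathcal{V}=\descendant(Q)\cap\mathcal{W}=\emptyset$; together with $Q\in\mathcal{W}\subseteq\mathcal{V}$ this shows $Q\in\free(\mathcal{V})$. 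Set $\Phi(P):=Q$. By construction $\Phi(P)\in\{P\}\cup\descendant(P)$, equivalently $P\in\{\Phi(P)\}\cup\ancestor(\Phi(P))$.

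Finally I would estimate the fibers. Fix $Q\in\free(\mathcal{V})$. The previous line gives $\Phi^{-1}(Q)\subseteq\{Q\}\cup\ancestor(Q)$, and $\Phi^{-1}(Q)\subseteq\free(\mathcal{U})$ is descendant-free by Lemma~\ref{lem:free}\ref{itm:free1}. If $Q\in\Phi^{-1}(Q)$, then no other person can lie in the fiber, since it would be an ancestor of $Q$, forcing $Q$ to be its descendant and contradicting descendant-freeness; hence $\#\Phi^{-1}(Q)=1\le\cGD$ (note $\cGD\ge1$, e.g. since $\parents(P')$ is a nonempty subset of $\ancestor(P')\cap\generation^{-1}(\generation(P')-1)$ for any $P'$ of generation at least $2$). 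If $Q\notin\Phi^{-1}(Q)$, then $\Phi^{-1}(Q)$ is a descendant-free subset of $\ancestor(Q)\setminus\Pop_\bot$ (recall $\free(\mathcal{U})\subseteq\Pop^\top\setminus\Pop_\bot$), so Proposition~\ref{pro:descendantfree} applies directly and gives $\#\Phi^{-1}(Q)\le\cGD$. Either way $\#\Phi^{-1}(Q)\le\cGD$, and summation over $Q\in\free(\mathcal{V})$ finishes the proof. The real content — that a fixed person has only boundedly many free (descendant-free) ancestors — is precisely Proposition~\ref{pro:descendantfree}, already available, so the only points requiring care here are the well-definedness of $\Phi$ (producing a descendant of $P$ that is free in $\mathcal{V}$) and the harmless edge case $Q\in\Phi^{-1}(Q)$; I do not expect any further obstacle.
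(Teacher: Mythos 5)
Your proposal is correct and follows essentially the same route as the paper: the paper covers $\free(\mathcal{U})$ by the sets $\bigl(\{Q\}\cup\ancestor(Q)\bigr)\cap\free(\mathcal{U})$, $Q\in\free(\mathcal{V})$ (obtained via Lemma~\ref{lem:free}\,\ref{itm:free2} applied to $\mathcal{V}$, whose proof is exactly your descent argument), and bounds each such set by $\cGD$ with the same two-case split and the same appeal to Proposition~\ref{pro:descendantfree}. Packaging the covering as the fibers of an explicit map $\Phi$ is a cosmetic difference only.
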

\begin{proof}
  It follows from $\free(\mathcal{U}) \subset \mathcal{U}\subset
  \mathcal{V}$ and Lemma~\ref{lem:free}\,\ref{itm:free2}, applied to
  $\mathcal{V}$, that
  \begin{align*}
    \free(\mathcal{U}) \subset \free(\mathcal{V}) \cup
    \ancestor(\free(\mathcal{V})).
  \end{align*}
  Thus we can write $\free(\mathcal{U})$ as
  \begin{align*}
    \free(\mathcal{U}) &= \bigcup_{P \in \free(\mathcal{V})}
    \underbrace{\Big( \big( \set{P} \cup \ancestor(P)\big) \cap
      \free(\mathcal{U}) \Big)}_{=:\mathcal{V}_P}.
  \end{align*}
   Now the claim follows, when  $\#\mathcal{V}_P \leq
  \cGD$ for all $P \in \free(\mathcal{V})$.  To this end, let $P \in
  \free(\mathcal{V})$. 
  
  Assume first that $P \in \free(\mathcal{U})$.  Since
  $\free(\mathcal{U})$ is descendent-free
  we have $\ancestor(P) \cap
  \free(\mathcal{U}) = \emptyset$. Thus $\mathcal{V}_P = \set{P}$
  and $\# \mathcal{V}_P =1 \leq \cGD$.

  Now assume $P \not\in \free(\mathcal{U})$. Then
  $\mathcal{V}_P = \ancestor(P) \cap \free(\mathcal{U})$. Since
  $\free(\mathcal{U})$ is descendant-free, the subset $\mathcal{V}_P$
  of $\ancestor(P)$ is descendant-free, and it follows by
  Proposition~\ref{pro:descendantfree} that $\#\mathcal{V}_P \leq
  \cGD$.
\end{proof}

\subsection{Populations and the lower diamond estimate}
\label{ssec:pop_and_LDE}

In this subsection we shall translate the lower diamond estimate of
Section~\ref{sec:projection} to the setting of populations. We start
with the definition of a lower diamond.

\begin{definition} \label{def:lower_dia_pop} For $\set{\Pop_1,
      \dots, \Pop_m} \subset \bbP$, we call
    $(\Pop^\wedge,\Pop_\vee;\Pop_1, \dots, \Pop_m)$ a {\em lower
      diamond} in $\bbP$ of size $m$, when
    $\Pop^\wedge=\bigwedge_{j=1}^m \Pop_j$, $\Pop_\vee=\bigvee_{j=1}^m
    \Pop_j$, and the sets $\Pop_\vee \setminus \Pop_j$ are mutually
    disjoint.
\end{definition}  

As we shall see in Corollary~\ref{cor:equiv-of-diamonds} below,
Definition~\ref{def:lower_dia_pop} in terms of populations is
consistent with 
Definition~\ref{def:lower_dia_tria} in terms of triangulations.  In particular, all results of
Section~\ref{sec:projection} dealing with lower diamonds transfer to populations.

Recall from Subsection~\ref{sec:populations} the definition
of $\Omega(P)$ for $P \in \Pop^\top \setminus \Pop_\bot$.
\begin{lemma}
  \label{lem:omega_disjoint}
  Let $P_1, P_2 \in \Pop^\top \setminus \Pop_\bot$ with $\Omega(P_1)
  \cap \Omega(P_2) \neq \emptyset$. Then either $P_1 = P_2$ or
  $P_1 \ancestorof P_2$ or $P_2 \ancestorof P_1$.
\end{lemma}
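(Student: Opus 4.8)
The plan is to use the geometric meaning of $\Omega(P)$ together with the family-tree structure of $\Pop^\top$. Recall that for $P\in\Pop^\top\setminus\Pop_\bot$, $\Omega(P)$ is the union (closure-then-interior) of those $T\in\mathfrak T$ with $P\in T$ and $\generation(T)=\generation(P)$; each such $T$ has $P$ as its newest vertex. The first step is to dispose of the easy case: if $\generation(P_1)=\generation(P_2)$ but $P_1\neq P_2$, then Proposition~\ref{prop:areas}\,\ref{areas_item1} gives $\Omega(P_1)\cap\Omega(P_2)=\emptyset$, contradicting the hypothesis; so in that case $P_1=P_2$ and we are done. Hence we may assume $\generation(P_1)\neq\generation(P_2)$, say $\generation(P_2)>\generation(P_1)$, and we must show $P_1\ancestorof P_2$.

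The second step reduces to a statement one generation up. Since $\Omega(P_1)\cap\Omega(P_2)\neq\emptyset$, there is a triangle $T_2\in\mathfrak T$ with $\generation(T_2)=\generation(P_2)$, $P_2\in T_2$, and $T_2$ meeting $\Omega(P_1)$ in a set of positive measure. Because $\Omega(P_1)$ is a union of generation-$\generation(P_1)$ triangles and $\generation(T_2)>\generation(P_1)$, the refinement-by-bisection tree structure forces $T_2$ to be contained in one such triangle $T_1\in\mathfrak T$ with $P_1\in T_1$, $\generation(T_1)=\generation(P_1)$. Thus $P_1$ is the newest vertex of $T_1$ and $T_2$ is a descendant of $T_1$ in $\mathfrak T$ obtained by $\generation(P_2)-\generation(P_1)$ successive bisections; moreover $P_2$ is a vertex of $T_2\subset T_1$. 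Now I would argue that along the chain of bisections from $T_1$ down to $T_2$, every vertex that appears is either a vertex of $T_1$ or a descendant (in $\Pop^\top$) of a vertex of $T_1$: the midpoint created by the first bisection is a child of the newest vertex of $T_1$, hence of $P_1$ or of another vertex of $T_1$, and one proceeds inductively.

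The key point — and the main obstacle — is to upgrade this to: the vertices of $T_1$ other than $P_1$ are themselves descendants of $P_1$, so that \emph{every} vertex of every descendant triangle of $T_1$ (in particular $P_2$) lies in $\descendant(P_1)\cup\{P_1\}$. This is where the generation structure of $\mathfrak T$ is essential: by the remark in Section~\ref{sec:populations}, no two vertices of a triangle in $\mathfrak T$ share the same generation unless both have generation zero; and $\generation(P)=\min\{\generation(T):P\in\nodes(T)\}$. I would combine this with the observation that $T_1$, having $P_1$ as its newest vertex of generation $\generation(P_1)\ge 1$, is one of the two children of its parent triangle $\hat T_1$, whose newest vertex is the parent of $P_1$; tracing back, the older two vertices of $T_1$ are exactly the vertices created as midpoints in earlier bisections along the path producing $P_1$, hence ancestors of $P_1$ — or, if $\generation(P_1)=0$ one argues directly since then $T_1\in\tria_\bot$ and every later midpoint descends from $P_1$ unless it descends from another generation-zero vertex of $T_1$, but a generation-zero vertex has no ancestors and cannot be an ancestor of $P_2$, forcing us back into the descendant chain through $P_1$. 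Assembling: $P_2\in\nodes(T_2)\subset\descendant(P_1)\cup\{P_1\}$, and since $\generation(P_2)>\generation(P_1)$ we get $P_2\neq P_1$, whence $P_1\ancestorof P_2$. The symmetric case $\generation(P_1)>\generation(P_2)$ gives $P_2\ancestorof P_1$, completing the proof.
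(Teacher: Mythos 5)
Your strategy is genuinely different from the paper's: you descend through the binary tree $\mathfrak{T}$ from the containing triangle $T_1$ to $T_2$ and track vertices, whereas the paper stays at the level of the sets $\Omega(\cdot)$, repeatedly applying Proposition~\ref{prop:areas}\,\ref{areas_item3} ($\Omega(P)\subset\Omega(\parents(P))$) to get $\overline{\Omega(P_2)}\subset\overline{\Omega(\ancestor(P_2)\cap\generation^{-1}(\generation(P_1)))}$ and then invoking the disjointness in Proposition~\ref{prop:areas}\,\ref{areas_item1} to force $P_1$ to be one of those ancestors. Your reduction to the pair $(T_1,T_2)$ and your observation that $P_2$ is the newest vertex of $T_2$ (because $\generation(P_2)=\generation(T_2)$) are correct and would yield a valid, self-contained alternative proof.

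However, as written your key step contains genuine errors. The stated goal ``the vertices of $T_1$ other than $P_1$ are themselves descendants of $P_1$'' cannot hold: those vertices have generation $<\generation(P_1)$, while descendants always have strictly larger generation. The fallback claim a few lines later, that the older two vertices of $T_1$ ``are exactly the vertices created as midpoints in earlier bisections along the path producing $P_1$, hence ancestors of $P_1$,'' is also false in general: only one of them (the newest vertex of the parent triangle of $T_1$) is forced to be an ancestor of $P_1$. For a concrete counterexample, take one initial triangle with vertices $A,B,C$, newest vertex $A$, refinement edge $BC$; two bisections produce $T_1$ with vertices $M_1,B,M_2$, where $M_1=\midpoint{BC}$ and $M_2=\midpoint{AB}$. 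Then $P_1=M_2$, $\ancestor(M_2)=\set{M_1,A}$, and the vertex $B$ of $T_1$ is neither an ancestor nor a descendant of $P_1$. And even if the ``ancestors'' version were true, the concluding line ``$\nodes(T_2)\subset\descendant(P_1)\cup\set{P_1}$'' would not follow from it (ancestors of $P_1$ are not descendants of $P_1$), so the final assembling step is a non sequitur. Finally, the digression on $\generation(P_1)=0$ is moot, since $P_1\in\Pop^\top\setminus\Pop_\bot$ forces $\generation(P_1)\ge 1$.

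The fix is short and you already have the ingredients: you do not need to control all vertices of $T_2$, only $P_2$, and you already know $P_2$ is the newest vertex of $T_2$. A simple induction over the chain of children $T_1=T^{(0)}\supset T^{(1)}\supset\cdots\supset T^{(\ell)}=T_2$ in $\mathfrak{T}$ shows that the newest vertex of $T^{(i+1)}$ is a child of the newest vertex of $T^{(i)}$ (the newly created midpoint has the newest vertex of the bisected triangle as a parent). Hence the newest vertex of $T_2$ lies in $\descendant(P_1)\cup\set{P_1}$; since $\generation(P_2)>\generation(P_1)$ forces $P_2\neq P_1$, you get $P_1\ancestorof P_2$.
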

\begin{proof}
  If $\generation(P_1)=\generation(P_2)$, then the claim follows by
  Proposition~\ref{prop:areas}\,\ref{areas_item1}. W.l.o.g. assume now
  that $\generation(P_1) < \generation(P_2)$. By (a repeated)
  application of Proposition~\ref{prop:areas}\,\ref{areas_item3}, we
  have
  \begin{align*} 
   \overline{\Omega(P_2)} \subset \overline{\Omega( \ancestor(P_2) \cap
    \generation^{-1}(\generation(P_1)))}.
  \end{align*}
  Thus $\Omega(P_1) \cap \Omega( \ancestor(P_2) \cap
  \generation^{-1}(\generation(P_1)) \neq \emptyset$, and therefore there
  exists a $P_3 \ancestorof P_2$ with $\generation(P_3) =
  \generation(P_1) $ and $\Omega(P_1) \cap \Omega(P_3)\neq \emptyset$.
  Finally, Proposition~\ref{prop:areas}\,\ref{areas_item1} implies $P_1 =
  P_3$, i.e., $P_1 \ancestorof P_2$.
\end{proof}

\begin{lemma}
  \label{lem:pw_disjoint}
  Let $\Pop_1,\Pop_2,\Pop_* \in \bbP$ with $\Pop_1,\Pop_2 \leq
  \Pop_*$. Define $\mathcal{R}_j := \Pop_* \setminus \Pop_j$ for
  $j=1,2$. Then $\mathcal{R}_1 \cap \mathcal{R}_2 =\emptyset$ if and
  only if $\Omega(\mathcal{R}_1) \cap
  \Omega(\mathcal{R}_2)=\emptyset$.
\end{lemma}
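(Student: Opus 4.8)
The plan is to reduce the statement about the open sets $\Omega(\mathcal{R}_1)$, $\Omega(\mathcal{R}_2)$ to a statement about the elementary pieces $\Omega(P)$, $P \in \mathcal{R}_1 \cup \mathcal{R}_2$, and then invoke Lemma~\ref{lem:omega_disjoint} together with the ancestor-closedness of populations. Two preliminary remarks: since $\Pop_\bot \subset \Pop_j$, we have $\mathcal{R}_j = \Pop_* \setminus \Pop_j \subset \Pop^\top \setminus \Pop_\bot$, so Lemma~\ref{lem:omega_disjoint} applies to persons drawn from $\mathcal{R}_1 \cup \mathcal{R}_2$; and $\mathcal{R}_1,\mathcal{R}_2$ are finite (as $\tria(\Pop_*)\in\bbT$ is finite), each $\Omega(P)$ is a nonempty open set, and hence for finite $\mathcal{U}$,
\[
  \Omega(\mathcal{U}) = \operatorname{interior}\bigcup_{P \in \mathcal{U}} \overline{\Omega(P)} = \operatorname{interior}\overline{\bigcup_{P \in \mathcal{U}} \Omega(P)},
\]
so that the open set $\bigcup_{P \in \mathcal{U}} \Omega(P)$ is dense in $\Omega(\mathcal{U})$. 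The implication ``$\Omega(\mathcal{R}_1)\cap\Omega(\mathcal{R}_2)=\emptyset \Rightarrow \mathcal{R}_1\cap\mathcal{R}_2=\emptyset$'' is the easy one: by contraposition, if $P \in \mathcal{R}_1\cap\mathcal{R}_2$ then $\emptyset\neq\Omega(P)\subset\Omega(\mathcal{R}_1)\cap\Omega(\mathcal{R}_2)$.

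For the converse, suppose $W := \Omega(\mathcal{R}_1)\cap\Omega(\mathcal{R}_2)\neq\emptyset$, an open set. Using the elementary fact that a nonempty open set contained in the closure of an open set meets that open set, I would first intersect $W$ with $\bigcup_{P\in\mathcal{R}_1}\Omega(P)$ (legitimate since $W\subset\Omega(\mathcal{R}_1)\subset\overline{\bigcup_{P\in\mathcal{R}_1}\Omega(P)}$), and then intersect the resulting nonempty open set with $\bigcup_{P\in\mathcal{R}_2}\Omega(P)$ (legitimate since it still lies in $\Omega(\mathcal{R}_2)$). This produces $P_1\in\mathcal{R}_1$ and $P_2\in\mathcal{R}_2$ with $\Omega(P_1)\cap\Omega(P_2)\neq\emptyset$. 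By Lemma~\ref{lem:omega_disjoint}, either $P_1=P_2$, so $P_1\in\mathcal{R}_1\cap\mathcal{R}_2$, or, say, $P_1\ancestorof P_2$ (the case $P_2\ancestorof P_1$ being symmetric). In the latter case $P_2\in\Pop_*$, and if we had $P_2\in\Pop_1$ then $P_1\in\ancestor(P_2)\subset\ancestor(\Pop_1)\subset\Pop_1$ by Lemma~\ref{lem:anc}\,(a), contradicting $P_1\in\mathcal{R}_1=\Pop_*\setminus\Pop_1$; hence $P_2\notin\Pop_1$, i.e.\ $P_2\in\mathcal{R}_1$, and since also $P_2\in\mathcal{R}_2$ we conclude $\mathcal{R}_1\cap\mathcal{R}_2\neq\emptyset$.

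The one point that requires care is precisely the step passing from ``$\Omega(\mathcal{R}_1)$ and $\Omega(\mathcal{R}_2)$ overlap'' to ``some $\Omega(P_1)$, $\Omega(P_2)$ overlap'': the operator $\mathcal{U}\mapsto\Omega(\mathcal{U})$ forms the interior of the \emph{closure} of the finite open union $\bigcup_{P\in\mathcal{U}}\Omega(P)$, which a priori could glue pieces together along lower-dimensional interfaces, but it cannot manufacture a genuine (two-dimensional) overlap that is not already visible among the building blocks $\Omega(P)$ --- this is exactly what the ``open-set-inside-a-closure'' argument makes rigorous. Everything after that reduction is bookkeeping using Lemma~\ref{lem:omega_disjoint} and the defining ancestor-closedness property of populations (Proposition~\ref{pro:Pop_in_bbP}, resp.\ Lemma~\ref{lem:anc}).
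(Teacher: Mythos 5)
Your proof is correct and follows essentially the same route as the paper's: the forward direction via $\Omega(P)\subset\Omega(\mathcal{R}_1)\cap\Omega(\mathcal{R}_2)$ for $P$ in the intersection, and the converse by reducing to an overlap $\Omega(P_1)\cap\Omega(P_2)\neq\emptyset$, invoking Lemma~\ref{lem:omega_disjoint}, and then using ancestor-closedness of $\Pop_1$ (resp.\ $\Pop_2$) to place a common person in $\mathcal{R}_1\cap\mathcal{R}_2$. The only difference is that you spell out, via the ``open set inside a closure'' argument, the reduction step that the paper passes over with a bare ``Therefore, there exists $P_1\in\mathcal{R}_1$ and $P_2\in\mathcal{R}_2$ with $\Omega(P_1)\cap\Omega(P_2)\neq\emptyset$'' --- a welcome bit of extra rigor.
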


\begin{proof}
  Assume that $\mathcal{R}_1\cap \mathcal{R}_2 \neq \emptyset$. Then
  for $P \in \mathcal{R}_1\cap \mathcal{R}_2$ we obviously have
  $\emptyset \neq \Omega(P) \subset \Omega(\mathcal{R}_1) \cap
  \Omega(\mathcal{R}_2)$.

  Assume now that $\Omega(\mathcal{R}_1) \cap \Omega(\mathcal{R}_2)
  \neq \emptyset$.  Therefore, there exists $P_1\in \mathcal{R}_1$ and $P_2
  \in \mathcal{R}_2$ with $\Omega(P_1) \cap \Omega(P_2) \neq
  \emptyset$.  It follows from Lemma~\ref{lem:omega_disjoint} that
  either $P_1= P_2$, and thus ${\mathcal R}_1\cap {\mathcal R}_2 \neq
  \emptyset$, or $P_1 \ancestorof P_2$ or $P_2 \ancestorof P_1$.  In the
  case $P_1\ancestorof P_2$ we obtain from  \new{$P_1\in \mathcal{R}_1$ that  $P_1\not\in \Pop_1$, and thus $P_2
  \not\in \Pop_1$ since $\Pop_1\in \bbP$}.  Consequently, we have $P_2 \in {\mathcal
    R}_1\cap {\mathcal R}_2$.  The same argument shows that ${\mathcal
    R}_1\cap {\mathcal R}_2\neq \emptyset$ when $P_2 \ancestorof P_1$.
\end{proof}

For $\Pop,\Pop_* \in \bbP$, we have $\Omega(\Pop_* \setminus
\Pop)=\Omega(\tria(\Pop_*) \setminus \tria(\Pop))$. Hence, as an immediate
consequence we obtain the following result. 
\begin{corollary} \label{cor:equiv-of-diamonds}
$(\Pop^\wedge,\Pop_\vee;\Pop_1,\ldots,\Pop_m)$ is a lower diamond in
$\bbP$ if and only if  
$(\tria(\Pop^\wedge),\tria(\Pop_\vee);\tria(\Pop_1),\ldots,\tria(\Pop_m))
$ is a lower diamond in $\bbT$. 
\end{corollary}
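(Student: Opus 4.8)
The plan is to reduce the statement entirely to the pairwise disjointness lemma (Lemma~\ref{lem:pw_disjoint}) together with the lattice isomorphism between $\bbP$ and $\bbT$. First I would observe that, by Proposition~\ref{P:ordering} and the explicit formulas \eqref{eq:wedgeveepop} for $\wedge$ and $\vee$ in $\bbP$, the bijection $\Pop \mapsto \tria(\Pop)$ carries $\bigwedge_{j=1}^m \Pop_j$ to $\bigwedge_{j=1}^m \tria(\Pop_j)$ and $\bigvee_{j=1}^m \Pop_j$ to $\bigvee_{j=1}^m \tria(\Pop_j)$. Hence the first two defining conditions of a lower diamond — $\Pop^\wedge=\bigwedge_j \Pop_j$ and $\Pop_\vee=\bigvee_j \Pop_j$ on the population side, versus $\tria(\Pop^\wedge)=\bigwedge_j \tria(\Pop_j)$ and $\tria(\Pop_\vee)=\bigvee_j \tria(\Pop_j)$ on the triangulation side — are equivalent.

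It remains to match the disjointness conditions. Set $\mathcal{R}_j := \Pop_\vee \setminus \Pop_j$. Using $\Omega(\tria_j \setminus \tria_\vee)=\Omega(\tria_\vee \setminus \tria_j)$ and the identity $\Omega(\Pop_\vee \setminus \Pop_j)=\Omega(\tria(\Pop_\vee) \setminus \tria(\Pop_j))$ recalled just before the corollary, the area of coarsening $\Omega(\tria(\Pop_j)\setminus\tria(\Pop_\vee))$ equals $\Omega(\mathcal{R}_j)$. Thus the $\bbT$-condition reads ``$\Omega(\mathcal{R}_i)\cap\Omega(\mathcal{R}_j)=\emptyset$ for all $i\neq j$'', while the $\bbP$-condition reads ``$\mathcal{R}_i\cap\mathcal{R}_j=\emptyset$ for all $i\neq j$''. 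For each fixed pair $i\neq j$ we have $\Pop_i,\Pop_j\leq\Pop_\vee$, so Lemma~\ref{lem:pw_disjoint}, applied with $\Pop_*=\Pop_\vee$, $\Pop_1=\Pop_i$, $\Pop_2=\Pop_j$, gives exactly $\mathcal{R}_i\cap\mathcal{R}_j=\emptyset \iff \Omega(\mathcal{R}_i)\cap\Omega(\mathcal{R}_j)=\emptyset$. Taking the conjunction over all pairs yields the equivalence of the two disjointness conditions, and hence the corollary.

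There is essentially no obstacle once Lemma~\ref{lem:pw_disjoint} is available; the only point deserving a little care is bookkeeping: the definition of a lower diamond in $\bbT$ is phrased via $\Omega(\tria_j\setminus\tria_\vee)$ whereas Lemma~\ref{lem:pw_disjoint} is phrased via $\Omega(\Pop_\vee\setminus\Pop_j)$, so one must first rewrite $\Omega(\tria(\Pop_j)\setminus\tria(\Pop_\vee))=\Omega(\tria(\Pop_\vee)\setminus\tria(\Pop_j))=\Omega(\Pop_\vee\setminus\Pop_j)$ before applying the lemma pairwise. As noted in the text following the corollary, this equivalence also transfers all results of Section~\ref{sec:projection} on lower diamonds to populations, in particular Definition~\ref{def:LDE} and Corollary~\ref{cor:G_LDE}.
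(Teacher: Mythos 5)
Your argument is correct and coincides with the paper's own reasoning: the lattice bijection handles the $\wedge$/$\vee$ conditions, and Lemma~\ref{lem:pw_disjoint} together with the identity $\Omega(\Pop_\vee\setminus\Pop_j)=\Omega(\tria(\Pop_\vee)\setminus\tria(\Pop_j))$ transfers the pairwise disjointness condition. The paper states this as an immediate consequence and you have simply spelled out the same steps.
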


This allows us to reformulate the lower diamond estimate in terms of
populations. In particular,  Corollary~\ref{cor:G_LDE} reads as:

\begin{corollary}[Lower diamond estimate] 
 \label{cor:LDE-G}
Let $(\Pop^\wedge,\Pop_\vee;\Pop_1,\ldots,\Pop_m)$ be a lower diamond in $\bbP$, then
 \begin{align*} 
   \mathcal{G}(\Pop^\wedge) - \mathcal{G}(\Pop_\vee) \eqsim
   \sum_{j=1}^m \big( \mathcal{G}(\Pop_j) -
   \mathcal{G}(\Pop_\vee)\big).
 \end{align*}
\end{corollary}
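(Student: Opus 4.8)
The plan is to transport Corollary~\ref{cor:G_LDE} verbatim through the dictionary between populations and triangulations. Recall that $\mathcal{G}(\Pop) := \mathcal{G}(\tria(\Pop))$ for every $\Pop \in \bbPhat$, and that the correspondence $\Pop \mapsto \tria(\Pop)$ is a lattice isomorphism between $(\bbP,\le)$ and $(\bbT,\le)$ (Proposition~\ref{P:ordering} together with the very definitions of $\tria_1 \vee \tria_2$ and $\tria_1 \wedge \tria_2$), so that in particular $\tria(\bigwedge_{j=1}^m \Pop_j) = \bigwedge_{j=1}^m \tria(\Pop_j)$ and $\tria(\bigvee_{j=1}^m \Pop_j) = \bigvee_{j=1}^m \tria(\Pop_j)$.

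First I would apply Corollary~\ref{cor:equiv-of-diamonds}: for the given lower diamond $(\Pop^\wedge,\Pop_\vee;\Pop_1,\dots,\Pop_m)$ in $\bbP$, the tuple $(\tria(\Pop^\wedge),\tria(\Pop_\vee);\tria(\Pop_1),\dots,\tria(\Pop_m))$ is a lower diamond in $\bbT$. Then Corollary~\ref{cor:G_LDE}, which states that $\mathcal{G}$ satisfies the lower diamond estimate on $\bbT$, yields
$$
\mathcal{G}(\tria(\Pop^\wedge)) - \mathcal{G}(\tria(\Pop_\vee)) \eqsim
\sum_{j=1}^m \big( \mathcal{G}(\tria(\Pop_j)) - \mathcal{G}(\tria(\Pop_\vee)) \big),
$$
with a constant depending only on $\tria_\bot$. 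Rewriting each occurrence of $\mathcal{G}(\tria(\Pop))$ as $\mathcal{G}(\Pop)$ gives exactly the asserted equivalence.

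Since all the work has already been done in Sections~\ref{sec:projection} and~\ref{sec:error-bounds} and in Lemma~\ref{lem:pw_disjoint}/Corollary~\ref{cor:equiv-of-diamonds}, there is no genuine obstacle here; this corollary is purely a change of notation. The one point that must be in place — and is — is that the combinatorial disjointness condition of Definition~\ref{def:lower_dia_pop} (the sets $\Pop_\vee \setminus \Pop_j$ being pairwise disjoint) is equivalent to the geometric disjointness of Definition~\ref{def:lower_dia_tria} (the areas $\Omega(\tria_j \setminus \tria_\vee)$ being pairwise disjoint), which is precisely the content of Lemma~\ref{lem:pw_disjoint} underlying Corollary~\ref{cor:equiv-of-diamonds}, using that $\Omega(\Pop_\vee \setminus \Pop_j) = \Omega(\tria(\Pop_\vee) \setminus \tria(\Pop_j))$.
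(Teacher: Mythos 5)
Your proposal is correct and is precisely the paper's own argument: the paper presents Corollary~\ref{cor:LDE-G} as an immediate reformulation of Corollary~\ref{cor:G_LDE} via the equivalence of lower diamonds in $\bbP$ and $\bbT$ established in Corollary~\ref{cor:equiv-of-diamonds} (and hence Lemma~\ref{lem:pw_disjoint}), together with the definition $\mathcal{G}(\Pop) := \mathcal{G}(\tria(\Pop))$. There is nothing to add.
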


\section{Energy optimality and instance optimality}
\label{sec:simpl-optim}

For each $m \in \setN_0$,  we define the minimal energy level of 
populations with not more than $\#\Pop_\bot+m$ persons by
\begin{align*}
  \Gopt[m] =\min\set{\mathcal{G}(\Pop)\,:\,\Pop \in \bbP,\, \#(\Pop
    \setminus \Pop_\bot) \leq m}.
\end{align*}
Since the set on the right-hand side is finite, the minimum is attained
and there exists a population $\Popt[m] \in \bbP$ such that $\Gopt[m]
= \mathcal{G}(\Popt[m])$.  Our analysis does not rely on the
particular choice of $\Popt[m]$ and therefore we may ignore the fact
that the choice of $\Popt[m]$ may not be unique.

The AFEM, Algorithm~\ref{algo:AFEM}, produces a monotone increasing
sequence $\Pop_k$
of populations with $\Pop_0 = \Pop_\bot$.  We say that the AFEM
is {\em energy  optimal}, when there exists a constant~$C>0$
such that 
$\mathcal{G}(\Pop_k) \leq \Gopt[m]$, whenever $\#(P_k \setminus
\Pop_\bot) \geq C\,m$.

\begin{lemma} 
  \label{lem:optimal_single_level}
   Consider the sequences $(\Pop_k)_{k \in \mathbb{N}_0}$ and
  $(\mathcal{M}_k)_{k \in \mathbb{N}_0}$ produced by
  Algorithm~\ref{algo:AFEM} (second formulation). Then there exists
  \new{a constant} \newer{$\gamma\Cgeq \mu$}, 
  such that: 
  If, for some $k, m \in \mathbb{N}_0$, we have $\Gopt[m] 
  \geq \mathcal{G}(\Pop_k) > \Gopt[m+1]$,
  then
  \begin{align*}
    \mathcal{G}(\Pop_k) - \mathcal{G}(\Pop_{k+1}) &\geq \gamma \,\#\Mk
    \, \big(\Gopt[m] - \Gopt[m+1] \big).
      \end{align*}
\end{lemma}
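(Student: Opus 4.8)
The plan is to reduce the assertion to a localized lower bound for the largest marking indicator $\estPbar[\Pop_k]^2$, and then to establish that bound by comparing $\Pop_k$ with a near‑optimal population.

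\textbf{Step 1 (reduction).} By Proposition~\ref{prop:lwb} one has $\Mk\neq\emptyset$ and $\estP[\Pop_k]^2\big(\Pop_{k+1}\cap(\Pop_k\plus\setminus\Pop_k)\big)\Cgeq\mu\,\#\Mk\,\estPbar[\Pop_k]^2$, while Corollary~\ref{C:energy_gain_P} gives $\mathcal{G}(\Pop_k)-\mathcal{G}(\Pop_{k+1})\eqsim\estP[\Pop_k]^2\big(\Pop_{k+1}\cap(\Pop_k\plus\setminus\Pop_k)\big)$. Hence $\mathcal{G}(\Pop_k)-\mathcal{G}(\Pop_{k+1})\Cgeq\mu\,\#\Mk\,\estPbar[\Pop_k]^2$, and it suffices to prove
\[
  \estPbar[\Pop_k]^2\ \Cgeq\ \Gopt[m]-\Gopt[m+1]
\]
with a constant independent of $\mu$; then $\gamma\eqsim\mu$ does the job.

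\textbf{Step 2 (the optimal marginal gain is the indicator of one edge).} From $\Gopt[m]\ge\mathcal{G}(\Pop_k)>\Gopt[m+1]$ we get $\Gopt[m]>\Gopt[m+1]$, which forces $\#(\Popt[m+1]\setminus\Pop_\bot)=m+1$ and $\Popt[m+1]\not\leq\Pop_k$, i.e.\ $\Popt[m+1]\setminus\Pop_k\neq\emptyset$. Choose $P^*\in\free(\Popt[m+1]\setminus\Pop_k)$, which is nonempty by Lemma~\ref{lem:free}\,\ref{itm:free3}; by Lemma~\ref{lem:free_removed}\,\ref{item2} also $P^*\in\free(\Popt[m+1]\setminus\Pop_\bot)$, so Lemma~\ref{lem:free_removed}\,\ref{item1} shows that $\Popt[m+1]\setminus\{P^*\}\in\bbP$ and has $m$ persons beyond $\Pop_\bot$; therefore $\mathcal{G}(\Popt[m+1]\setminus\{P^*\})\ge\Gopt[m]$, and consequently
\[
  \Gopt[m]-\Gopt[m+1]\ \le\ \mathcal{G}(\Popt[m+1]\setminus\{P^*\})-\mathcal{G}(\Popt[m+1]).
\]
Since $\Popt[m+1]=(\Popt[m+1]\setminus\{P^*\})\oplus P^*$ adds the single person $P^*$, applying Corollary~\ref{C:energy_gain_P} to $\Popt[m+1]\setminus\{P^*\}\le\Popt[m+1]$ (together with \eqref{eq:midpoints_pre}) shows the right‑hand side is $\eqsim\estP[\Popt[m+1]\setminus\{P^*\}]^2(\{P^*\})$, i.e.\ the squared residual indicator of the single edge with midpoint $P^*$ in the triangulation $\tria(\Popt[m+1]\setminus\{P^*\})$.

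\textbf{Step 3 (transfer to $\Pop_k$) and the main obstacle.} What remains is to bound this single‑edge quantity by $\estPbar[\Pop_k]^2$, and here lies the heart of the matter: in general $\tria(\Popt[m+1])$ and $\tria(\Pop_k)$ are not nested, so the residual indicator of the edge $P^*$ cannot be compared on them directly. I would circumvent this by passing to the lattice operations of Section~\ref{sec:newest-bisection}: set $\Pop_\vee:=\Pop_k\vee\Popt[m+1]$ (so $\Pop_k\oplus P^*\le\Pop_\vee$ and $\mathcal{G}(\Pop_\vee)\le\Gopt[m+1]$) and work with the common coarsening $\Pop_k\wedge\Popt[m+1]$. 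Rewriting all energy differences through Corollary~\ref{C:energy_gain_P} and decomposing the refinement $\Pop_k\to\Pop_\vee$ ``one free vertex at a time'' by means of the lower diamond estimate (Corollary~\ref{cor:LDE-G}), one reduces to estimating how many vertices of $\Popt[m+1]\setminus\Pop_k$ are \emph{free}, and this number is controlled by a fixed multiple of the number of free vertices that can be removed from $\Popt[m+1]$ thanks to Theorem~\ref{thm:free_bound}. For the two constituents of $\mathcal{G}=\mathcal{J}+\mathcal{H}$ the required comparison of the ``profit of bisecting the edge $P^*$'' is carried out separately: $\mathcal{H}$ elementwise via Lemma~\ref{lem:H_osc_discr}, using that $P^*$ is a frontier vertex of $\Pop_k\wedge\Popt[m+1]$ and of $\Popt[m+1]\setminus\{P^*\}$ so that the elements abutting that edge are comparable; and $\mathcal{J}$ via \eqref{eq:Ediff}, the Galerkin best‑approximation property, and the localized Scott--Zhang projector of Lemma~\ref{pro:proj}, which confines the difference of Galerkin energies to a neighbourhood of the refined edge. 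Assembling the three displayed estimates yields $\estPbar[\Pop_k]^2\Cgeq\Gopt[m]-\Gopt[m+1]$, hence the lemma with $\gamma\eqsim\mu$. The genuinely hard point is exactly this transfer — relating a posteriori quantities on the adaptively generated mesh $\Pop_k$ to those on the a priori unrelated near‑optimal mesh $\Popt[m+1]$ — where the interplay of the lower diamond estimate with the ``no loss of free elements under refinement'' property (Theorem~\ref{thm:free_bound}) is indispensable; the rest is bookkeeping.
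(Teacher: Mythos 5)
There is a genuine gap: the proof stops exactly where the lemma's content begins. Your Step 1 correctly reduces the claim to $\estPbar[\Pop_k]^2\Cgeq\Gopt[m]-\Gopt[m+1]$, and your Step 2 correctly shows $\Gopt[m]-\Gopt[m+1]\le\mathcal{G}(\Popt[m+1]\setminus\{P^*\})-\mathcal{G}(\Popt[m+1])$ for a free person $P^*$. But Step 3 — relating this quantity on the unrelated mesh $\Popt[m+1]$ to $\estPbar[\Pop_k]^2$ — is only a description of intended ingredients, not an argument. In particular the suggestion that one can compare the indicator of the edge with midpoint $P^*$ on $\tria(\Popt[m+1]\setminus\{P^*\})$ with indicators on $\tria(\Pop_k)$ because ``the elements abutting that edge are comparable'' is false in general: the two triangulations need not be locally comparable near $P^*$ at all (different local mesh sizes, different Galerkin solutions), and no localized Scott--Zhang argument bridges them. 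The paper never makes such a local edge-to-edge comparison; its argument is aggregated, not local.

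Moreover, your single-$P^*$ version of Step 2 cannot be completed even with the right ingredients, because the cardinalities do not cancel. The paper's chain is: (i) $\estPbar[\Pop_k]^2\geq\frac{1}{\#\mathcal{U}}\,\mathcal{E}^2_{\Pop_k}\big(\Pop_\vee\cap(\Pop_k\plus\setminus\Pop_k)\big)$ where $\mathcal{U}=\free(\Pop_\vee\cap(\Pop_k\plus\setminus\Pop_k))$, using that the ancestor chains of $\mathcal{U}$ cover $\Pop_\vee\cap(\Pop_k\plus\setminus\Pop_k)$ (Lemma~\ref{lem:free}\,\ref{itm:free2}); (ii) Corollary~\ref{C:energy_gain_P} and the size-2 lower diamond $(\Pop^\wedge,\Pop_\vee;\Pop_k,\Popt[m+1])$ to get $\mathcal{E}^2_{\Pop_k}(\cdots)\Cgeq\mathcal{G}(\Pop^\wedge)-\mathcal{G}(\Popt[m+1])$; (iii) a \emph{second} lower diamond of size $\#\mathcal{C}$ with $\mathcal{C}=\free(\Popt[m+1]\setminus\Pop^\wedge)$, removing \emph{all} of $\mathcal{C}$ one person at a time, which yields $\mathcal{G}(\Pop^\wedge)-\mathcal{G}(\Popt[m+1])\Cgeq\#\mathcal{C}\,(\Gopt[m]-\Gopt[m+1])$; and (iv) Theorem~\ref{thm:free_bound} giving $\#\mathcal{U}\le\cGD\,\#\mathcal{C}$, so the factors $\#\mathcal{U}$ and $\#\mathcal{C}$ cancel. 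With your single $P^*$ you only get the $\#\mathcal{C}=1$ version of (iii), which leaves an uncontrolled loss of $\#\mathcal{U}$ in (i). You cite inequality (i) nowhere, and you never assemble (i)--(iv); this assembly, and in particular the interplay between the averaging in (i) and the multiplicity $\#\mathcal{C}$ in (iii), is the actual proof.
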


\begin{proof}
Let $k, m \in \mathbb{N}_0$ be such that $\Gopt[m] 
  \geq \mathcal{G}(\Pop_k) > \Gopt[m+1]$.
  \begin{figure}[h]
    \centering
    \input{algomod.pspdftex}
    \caption{Illustration with the proof of
      Lemma~\ref{lem:optimal_single_level}. The populations
      represented by the dots in the left and right ellipses are 
      $\set{\Popt[m+1] \ominus P: P \in \mathcal{C}}$ and $\set{\Pop_k \oplus P: P \in
        \mathcal{U}}$,
      respectively.}
    \label{fig:opt}
  \end{figure}
  From Corollary~\ref{C:energy_gain_P} and
  Proposition~\ref{prop:lwb}, we obtain
  \begin{equation} \label{eq:opt0} \mathcal{G}(\Pop_k) -
    \mathcal{G}(\Pop_{k+1}) \Cgeq \mathcal{E}^2_{\Pop_k}(\Pop_{k+1}
    \cap (\Pop_k\plus\setminus \Pop_k)) \geq \mu\,\# \Mk \,\estPbar[\Pop_k]^2.
  \end{equation}  
  
  Setting $\Pop_\vee :=
  \Pop_k\vee \Popt[m+1]$ and $\Pop^\wedge := \Pop_k \wedge
  \Popt[m+1]$,  we choose
  $$
  \mathcal{U} := \free(\Pop_\vee \cap (\Pop_k\plus\setminus \Pop_k));
  $$ 
 see Figure~\ref{fig:opt}.
  Since
  $\mathcal{G}(\Popt[m+1])<\mathcal{G}(\Pop_k)$, we have
  $\mathcal{U}\neq \emptyset$; compare with Lemma~\ref{lem:free}\ref{itm:free3}.
  Thanks to the definition of $\estPbar[\Pop_k]^2$ \new{(see Algorithm~\ref{algo:AFEM})} and $\mathcal{U}
  \subset \Pop_k\plus \setminus \Pop_k$ we obtain that
  \begin{align*}
    \estPbar[\Pop_k]^2 &\geq \frac{1}{\#\mathcal{U}} \sum_{P \in
      \mathcal{U}} \mathcal{E}_{\Pop_k}^2((\Pop_k \oplus P)\setminus
    \Pop_k) \geq \frac{1}{\#\mathcal{U}} \,
    \mathcal{E}_{\Pop_k}^2\Big(\bigcup_{P \in \mathcal{U}} (\Pop_k
    \oplus P)\setminus \Pop_k\Big).
  \end{align*}
  By Lemma~\ref{lem:free}\,\ref{itm:free2} we have $\mathcal{U} \cup
  \ancestor(\mathcal{U}) \supset \Pop_\vee \cap (\Pop_k\plus\setminus
  \Pop_k)$, and thus
  \begin{align*}
    \bigcup_{P \in \mathcal{U}} (\Pop_k \oplus P) \setminus
    \Pop_k &= \bigcup_{P \in \mathcal{U}} \big(\set{P} \cup
    \ancestor(P)\big) \setminus \Pop_k
    \\
    &= \big(\mathcal{U} \cup \ancestor(\mathcal{U})\big) \setminus
    \Pop_k
    \\
    &\supset \big(\Pop_\vee \cap (\Pop_k\plus \setminus \Pop_k)) \setminus
    \Pop_k
    \\
    &= \Pop_\vee \cap (\Pop_k\plus \setminus \Pop_k).
  \end{align*}
  This and the previous estimate prove
  \begin{equation*} 
    \estPbar[\Pop_k]^2 \geq
    \frac{1}{\#\mathcal{U}} \,\mathcal{E}_{\Pop_k}^2\big(\Pop_\vee \cap
    (\Pop_k\plus \setminus \Pop_k)\big).
  \end{equation*}
  An application of Corollary~\ref{C:energy_gain_P} then shows that
  \begin{equation*} 
    \mathcal{E}_{\Pop_k}^2(\Pop_\vee
    \cap (\Pop_k\plus\setminus \Pop_k)) \Cgeq \mathcal{G}(\Pop_k) -
    \mathcal{G}(\Pop_\vee).
  \end{equation*}
  
  Since $\Pop_k \neq \Popt[m+1]$, by Lemma~\ref{lem:lowersize2} and Corollary~\ref{cor:equiv-of-diamonds} we have that
    $(\Pop^\wedge,\Pop_\vee;\Pop_k,\Popt[m+1])$ is a lower diamond in $\bbP$.
  By the lower diamond estimate Corollary~\ref{cor:LDE-G} together with
  ${\mathcal G}(\Pop_k) \geq
      {\mathcal G}(\Popt[m+1]) \geq {\mathcal G}(\Pop_\vee)$, this implies
  \begin{align*} 
   \begin{aligned}
      \mathcal{G}(\Pop_k) - \mathcal{G}(\Pop_\vee)& \geq \frac 12 \Big(
      \big( \mathcal{G}(\Pop_k) - \mathcal{G}(\Pop_\vee)\big) + \big(
      \mathcal{G}(\Popt[m+1]) - \mathcal{G}(\Pop_\vee)\big) \Big)
      \\
      & \eqsim \mathcal{G}(\Pop^\wedge) - \mathcal{G}(\Pop_\vee) \geq  \mathcal{G}(\Pop^\wedge) - \mathcal{G}(\Popt[m+1]).
    \end{aligned}
  \end{align*}
  Combining the above observations with \eqref{eq:opt0}, yields
  \begin{equation} \label{eq:opt5}
    \begin{split}
      \mathcal{G}(\Pop_k) - \mathcal{G}(\Pop_{k+1}) &\Cgeq \mu\,\frac{ \#
        \Mk}{\#\mathcal{U}} \, \mathcal{E}_{\Pop_k}^2(\Pop_\vee \cap
      (\Pop_k\plus\setminus \Pop_k))
      \\
      & \Cgeq \mu\,\frac{ \# \Mk}{\#\mathcal{U}} \,
      \big(\mathcal{G}(\Pop^\wedge) - \mathcal{G}(\Popt[m+1])\big).
    \end{split}
  \end{equation}

  For every $P \in \mathcal{C} := \free( \Popt[m+1] \setminus
  \Pop^\wedge) \neq \emptyset$, we set $\Pop'_P := \Popt[m+1] \ominus P$; see Figure~\ref{fig:opt}. 
  Thanks to Lemma~\ref{lem:free_removed}, we know $\Pop'_P= \Popt[m+1]
  \setminus \set{P}$,
  and thus $\Pop^\wedge \leq \new{\bigwedge}_{P \in \mathcal{C}} \Pop'_{P}$.
  If $\#\mathcal{C}>1$, then the sets 
  $\Popt[m+1] \setminus \Pop'_P$ for $P \in \mathcal{C}$ are mutually disjoint, and $\bigvee_{P \in \mathcal{C}} \Pop'_{P}=\Popt[m+1]$.
  Applying the lower diamond estimate to the lower diamond
    $(\bigwedge_{P \in \mathcal{C}} \Pop'_{P},\Popt[m+1];(\Pop'_P)_{P
  \in \mathcal{C}})$ in $\bbP$, we obtain that
\begin{align*}
  \mathcal{G}(\Pop^\wedge) - \mathcal{G}(\Popt[m+1]) 
  & \geq 
  \mathcal{G}(\bigwedge_{P' \in \mathcal{C}} \Pop'_{P'}) -
  \mathcal{G}(\Popt[m+1] ) 
  \eqsim
  \sum_{P \in \mathcal{C}} \big(\mathcal{G}(\Pop'_P)-\mathcal{G}(\Popt[m+1])\big).
\end{align*}
If $\#\mathcal{C}=1$, then the last step is obvious, whence the estimate is true in
any case. Since $\#(\Pop'_P \setminus \Pop_\bot) = \#(\Popt[m+1]
  \setminus\Pop_\bot)-1 \leq m$, we 
  have
  \begin{align*}
    \mathcal{G}(\Pop'_P) &\geq 
    \mathcal{G}(\Popt[m]) .
  \end{align*}
  Therefore, we conclude from \eqref{eq:opt5}, that 
  \begin{align}
    \label{eq:opt8}
    \mathcal{G}(\Pop_k) - \mathcal{G}(\Pop_{k+1}) &\Cgeq \mu\,\frac{\#
      \Mk \#\mathcal{C}}{\#\mathcal{U}} \,\big(
    \mathcal{G}(\Popt[m]) - \mathcal{G}(\Popt[m+1]) \big).
  \end{align}

  It remains to prove that $\#\mathcal{C} \gtrsim \#\mathcal{U}$.
  Define $\mathcal{V} := \Pop_\vee \cap (\Pop_k\plus \setminus
  \Pop_k)$, then $\mathcal{U} = \free(\mathcal{V})$ and $\mathcal{V}
  \subset \Pop_\vee \setminus \Pop_k$.  Since $\Pop_\vee \setminus
  \Pop_k = (\Pop_k \cup \Popt[m+1])\setminus \Pop_k= \Popt[m+1]
  \setminus (\Pop_k \cap \Popt[m+1]) = \Popt[m+1] \setminus
  \Pop^\wedge$, we have~$\mathcal{V} \subset \Popt[m+1] \setminus
  \Pop^\wedge$. Thus Theorem~\ref{thm:free_bound} and $\mathcal{C}
  = \free( \Popt[m+1]\setminus \Pop^\wedge)$ yield
  \begin{align}
    \label{eq:opt9}
    \#\mathcal{U} = \# \free(\mathcal{V}) \leq \cGD \,\#
    \free(\Popt[m+1] \setminus \Pop^\wedge ) = \cGD \,\# \mathcal{C}.
  \end{align}
  Therefore,~\eqref{eq:opt8} and \eqref{eq:opt9} imply the desired estimate
  \begin{align*}
    \mathcal{G}(\Pop_k) - \mathcal{G}(\Pop_{k+1}) &\gtrsim \mu\,\#
    \Mk \,\big( \mathcal{G}(\Popt[m]) -
    \mathcal{G}(\Popt[m+1]) \big). \qedhere
  \end{align*}
\end{proof}

\new{Instance optimality of the AFEM would follow from
  Lemma~\ref{lem:optimal_single_level} when there would be a uniform
  bound on the cardinalities of the sets $\mathcal{M}_k$ of marked
  edges. Such a bound, however, does not exist, and the case of having
  `many' marked edges will be covered by the following lemma.}

\begin{lemma}
  \label{lem:optimal_multi_level}
  Consider the sequences $(\Pop_k)_{k \in \mathbb{N}_0}$ and
  $(\mathcal{M}_k)_{k \in \mathbb{N}_0}$ produced by
  Algorithm~\ref{algo:AFEM} (second formulation). Then
  there exists a constant \newer{$1\le K \Cleq 1/\mu$} 
  such that: 

  If, for some $k,m \in \mathbb{N}_0$, $\Gopt[m] 
  \geq \mathcal{G}(\Pop_k) > \Gopt[m+1]$,
  then
  \begin{align*}
    \mathcal{G}(\Pop_{k+1}) & \leq
    \Gopt[m+\lfloor \frac{\# \Mk}{K}
    \rfloor].
  \end{align*}
\end{lemma}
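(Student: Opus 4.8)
The plan is to notice that the proof of Lemma~\ref{lem:optimal_single_level} uses its hypothesis only through the single fact $\mathcal{G}(\Popt[m+1])<\mathcal{G}(\Pop_k)$, and that this fact holds more generally with $\Popt[m+\ell]$ in place of $\Popt[m+1]$ for \emph{every} $\ell\ge1$, since then $\mathcal{G}(\Popt[m+\ell])=\Gopt[m+\ell]\le\Gopt[m+1]<\mathcal{G}(\Pop_k)$. Running that proof with $\Popt[m+1]$ replaced throughout by $\Popt[m+\ell]$ (so that $\Pop_\vee:=\Pop_k\vee\Popt[m+\ell]$, $\Pop^\wedge:=\Pop_k\wedge\Popt[m+\ell]$, $\mathcal{U}:=\free(\Pop_\vee\cap(\Pop_k\plus\setminus\Pop_k))$ and $\mathcal{C}:=\free(\Popt[m+\ell]\setminus\Pop^\wedge)$, with each $\Popt[m+\ell]\setminus\{P\}$, $P\in\mathcal{C}$, now having at most $m+\ell-1$ persons beyond $\Pop_\bot$ and hence energy at least $\Gopt[m+\ell-1]$), and invoking Proposition~\ref{prop:lwb}, Corollary~\ref{C:energy_gain_P}, the lower diamond estimate Corollary~\ref{cor:LDE-G}, Lemma~\ref{lem:free}, Lemma~\ref{lem:free_removed} and Theorem~\ref{thm:free_bound} exactly as there, one obtains the estimate (with a constant independent of $k$, $m$ and $\ell$)
\begin{align*}
  \mathcal{G}(\Pop_k)-\mathcal{G}(\Pop_{k+1})\;\Cgeq\;\mu\,\#\Mk\,\big(\Gopt[m+\ell-1]-\Gopt[m+\ell]\big)\qquad\text{for every }\ell\ge1 .
\end{align*}
The constant is uniform in $\ell$ precisely because the lower diamond estimate and Theorem~\ref{thm:free_bound} carry constants that do not depend on the size of the diamond, respectively on the sets involved.

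Granting this, set $n:=\#\Mk$ and $d_i:=\Gopt[m+i]-\Gopt[m+i+1]\ge0$, so that $\sum_{i=0}^{j-1}d_i=\Gopt[m]-\Gopt[m+j]$. If $n<K$, then $\lfloor n/K\rfloor=0$ and the assertion reduces to $\mathcal{G}(\Pop_{k+1})\le\Gopt[m]$, which is clear because $\mathcal{G}$ is non-increasing along $(\Pop_k)_{k}$ and $\mathcal{G}(\Pop_k)\le\Gopt[m]$ by hypothesis. If $n\ge K$, put $j:=\lfloor n/K\rfloor\ge1$ and apply the displayed estimate for $\ell=1,\dots,j$; together with the trivial inequality $\max_{0\le i\le j-1}d_i\ge\tfrac1j\sum_{i=0}^{j-1}d_i$ this yields, with $c>0$ the fixed constant implicit in those estimates,
\begin{align*}
  \mathcal{G}(\Pop_k)-\mathcal{G}(\Pop_{k+1})\;\ge\;c\,\mu\,n\max_{0\le i\le j-1}d_i\;\ge\;\frac{c\,\mu\,n}{j}\,\big(\Gopt[m]-\Gopt[m+j]\big),
\end{align*}
so that $\mathcal{G}(\Pop_{k+1})\le\mathcal{G}(\Pop_k)-\tfrac{c\,\mu\,n}{j}\big(\Gopt[m]-\Gopt[m+j]\big)\le\Gopt[m]-\tfrac{c\,\mu\,n}{j}\big(\Gopt[m]-\Gopt[m+j]\big)$. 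Choosing $K:=\max\{1,\lceil(c\mu)^{-1}\rceil\}$, which satisfies $1\le K\Cleq1/\mu$ (using $\mu\le1$), and using $j=\lfloor n/K\rfloor\le n/K$, i.e.\ $n/j\ge K$, we obtain $\tfrac{c\,\mu\,n}{j}\ge c\,\mu\,K\ge1$, and hence $\mathcal{G}(\Pop_{k+1})\le\Gopt[m+j]$, which is the claim.

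The only genuine obstacle is the first step: one must realise that the machinery behind Lemma~\ref{lem:optimal_single_level} in fact delivers, for \emph{every} level $\ell\ge1$, a one-step energy gain proportional to $\mu\,\#\Mk\,(\Gopt[m+\ell-1]-\Gopt[m+\ell])$ with an $\ell$-independent constant. Since $\ell\mapsto d_\ell$ need not be monotone, nothing better than the ``maximum $\ge$ average'' bound above is available; but this already produces the factor $c\,\mu\,\#\Mk/j$ in front of the full gap $\Gopt[m]-\Gopt[m+j]$, and that factor exceeds $1$ as soon as $j\le c\,\mu\,\#\Mk$, i.e.\ for the choice $j=\lfloor\#\Mk/K\rfloor$ with $K\ge(c\mu)^{-1}$.
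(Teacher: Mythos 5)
Your proof is correct, and it takes a genuinely different route from the paper. Both proofs rest on the same machinery---Proposition~\ref{prop:lwb}, Corollary~\ref{C:energy_gain_P}, the lower diamond estimate, Lemmas~\ref{lem:free} and~\ref{lem:free_removed}, and Theorem~\ref{thm:free_bound}---but they deploy it differently. The paper runs the Lemma~\ref{lem:optimal_single_level} argument \emph{once}, with $\Popt[m+\alpha]$ for $\alpha:=\lfloor \#\Mk/K\rfloor$, and at the end coarsens $\Popt[m+\alpha]$ down past level $m$ in one go by partitioning $\mathcal{C}=\free(\Popt[m+\alpha]\setminus\Pop^\wedge)$ into $N=\lfloor\#\mathcal{C}/\alpha\rfloor$ blocks of $\alpha$ persons each and applying a single lower diamond of size $N$; this forces a case split on $N=0$ versus $N\ge1$, since $\mathcal{C}$ may not accommodate even one block. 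You instead re-run the Lemma~\ref{lem:optimal_single_level} argument once for \emph{each} $\ell=1,\dots,j$, always removing single persons (so $\mathcal{C}_\ell=\free(\Popt[m+\ell]\setminus\Pop^\wedge_\ell)$ gives $\Pop'_P=\Popt[m+\ell]\setminus\{P\}$, which has at most $m+\ell-1$ persons beyond $\Pop_\bot$), obtaining the one-step gain bound $\mathcal{G}(\Pop_k)-\mathcal{G}(\Pop_{k+1})\ge c\mu\,\#\Mk\,(\Gopt[m+\ell-1]-\Gopt[m+\ell])$ for every $\ell$, and then closing with the maximum-over-average observation. Your route buys modularity and avoids the $N=0$ case split; the cost is that you must observe (as you do) that the implicit constant is uniform in $\ell$, which is precisely why Definition~\ref{def:LDE} and Theorem~\ref{thm:free_bound} are formulated with constants independent of the size of the diamond and of the sets. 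You also correctly identify that the only input from the hypothesis needed in the replacement $\Popt[m+1]\rightsquigarrow\Popt[m+\ell]$ is $\mathcal{G}(\Popt[m+\ell])<\mathcal{G}(\Pop_k)$, which indeed follows from $\Gopt[m+\ell]\le\Gopt[m+1]<\mathcal{G}(\Pop_k)$. The resulting bound $K\Cleq 1/\mu$ matches the paper's (which takes $K=2\cGD/\mu$).
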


\begin{proof} 
  For some $k, m \in \mathbb{N}_0$, let  $\Gopt[m] 
  \geq \mathcal{G}(\Pop_k) > \Gopt[m+1]$.
  For $\# \Mk <K$, we have that $\lfloor \frac{\# \Mk}{K} \rfloor=0$, and the
  claim is a direct consequence of the monotonicity of the total energy.
  Therefore, assume that $\# \Mk \geq K$.  We set $\alpha:=\lfloor
  \frac{\# \Mk}{K} \rfloor$.   
  Setting
  $\Pop_\vee := \Pop_k
  \vee \Popt[m+\alpha]$ and $\Pop^\wedge := \Pop_k \wedge
  \Popt[m+\alpha]$.
  We repeat the steps in the proof of
  Lemma~\ref{lem:optimal_single_level} up to~\eqref{eq:opt5}, 
  now using the diamond
  $(\Pop^\wedge,\Pop_\vee;\Pop_k,\Popt[m+\alpha])$ in $\bbP$.
  Then, for
  $\mathcal{U}:=\free(\Pop_\vee \cap (\Pop_k\plus \setminus \Pop_k))$
  we get
  \begin{align}
    \label{eq:opt10}
    \mathcal{G}(\Pop_k) - \mathcal{G}(\Pop_{k+1}) &\Cgeq \frac{\mu\,\#
      \Mk}{\# \mathcal{U}} \big(\mathcal{G}(\Pop^\wedge) -
    \mathcal{G}(\Popt[m+\alpha]) \big).
  \end{align}
  
  We define $\mathcal{C} := \free( \Popt[m+\alpha] \setminus
  \Pop^\wedge)$, and set $N:=\lfloor \frac{\#
    \mathcal{C}}{\alpha}\rfloor$.  Exactly as in
  Theorem~\ref{thm:free_bound} equation \eqref{eq:opt9}, we have
  $\# \mathcal{U} \leq \cGD \# \mathcal{C}$.

  If $N=0$, then $\# \mathcal{C} < \frac{\# \Mk}{K}$, and hence
  $\frac{\# \Mk}{\# \mathcal{U}} > \frac{K}{\cGD}$.  \newer{Thus, for every 
  $K\ge \cGD/\mu$, we have} that 
  $\mathcal{G}(\Pop_k) - \mathcal{G}(\Pop_{k+1}) \geq
  \mathcal{G}(\Pop^\wedge) - \mathcal{G}(\Popt[m+\alpha])$, and thus
  by $\mathcal{G}(\Pop^\wedge) \geq \mathcal{G}(\Pop_k)$ we arrive at
  $\mathcal{G}(\Pop_{k+1}) \leq \mathcal{G}(\Popt[m+\alpha])$.

  Else, if $N \geq 1$, then $N \geq \frac{\# \mathcal{C}}{2\alpha}$.
  For $\mathcal{C}_1, \dots, \mathcal{C}_N$ being mutually disjoint
  subsets of~$\mathcal{C}$, each having~$\alpha$ elements, we set $\Pop'_j :=
  \Popt[m+\alpha] \ominus \mathcal{C}_j$, $j=1,\dots,N$. It follows
  from Lemma~\ref{lem:free_removed}, that $\Pop'_j = \Popt[m+\alpha]
  \setminus \mathcal{C}_j$, and hence we have $\Pop^\wedge \leq
  \bigwedge_{j=1}^N \Pop'_j $ and $\# (\Pop'_j \setminus \Pop_\bot)
  \leq 
  m$. The last inequality implies that
  \begin{align*}
        \mathcal{G}(\Pop'_j) &\geq
    \mathcal{G}(\Popt[m]) , \qquad
    j=1,\dots,N.
  \end{align*}  
  If $N>1$, then the sets 
  $\Popt[m+\alpha] \setminus \Pop'_j\new{=\mathcal{C}_j}$ for $1 \leq j \leq N$ are mutually disjoint, and $\bigvee_{j=1}^N \Pop'_{j}=\Popt[m+\alpha]$.
  By
    applying the lower diamond estimate to the lower diamond
    $(\bigwedge_{j=1}^N \Pop'_{j},\Popt[m+\alpha];(\Pop'_j)_{1 \leq j \leq N})$ in $\bbP$, we obtain that
\begin{align*}
  \mathcal{G}(\Pop^\wedge) - \mathcal{G}(\Popt[m+\alpha]) 
  & \geq 
  \mathcal{G}(\bigwedge_{j=1}^N \Pop'_j) -
  \mathcal{G}(\Popt[m+\alpha] ) 
  \eqsim
  \sum_{j=1}^N \big(\mathcal{G}(\Pop'_j)-\mathcal{G}(\Popt[m+\alpha])\big).
\end{align*}
If $N=1$, then this estimate is obvious, whence it is true for $N \geq 1$.

   Therefore, we can further estimate  \eqref{eq:opt10} by
  \begin{align*}
    \mathcal{G}(\Pop_k) - \mathcal{G}(\Pop_{k+1}) &\Cgeq \frac{\mu N \#
      \Mk}{\# \mathcal{U}} \big(\mathcal{G}(\Popt[m]) -
    \mathcal{G}(\Popt[m+\alpha])\big)\\
    & \geq \frac{\mu K}{2\cGD}\big(\mathcal{G}(\Popt[m]) -
    \mathcal{G}(\Popt[m+\alpha])\big).
  \end{align*}  
  In the last estimate, we have used that $\frac{ N \# \Mk}{\# \mathcal{U}}
  \geq \frac{K}{2 \cGD}$, since $N \geq \frac{\# \mathcal{C}}{2
    \alpha}$, $\frac{1}{\alpha} \geq \frac{K}{ \# \Mk}$, and
  $\frac{1}{\# \mathcal{U}} \geq \frac{1}{\cGD \#\mathcal{C}}$.  By
  taking \newer{$K= 2\cGD/\mu$}, we conclude that 
  $\mathcal{G}(\Pop_k) - \mathcal{G}(\Pop_{k+1}) \geq
  \mathcal{G}(\Popt[m]) - \mathcal{G}(\Popt[m+\alpha])$. Thus
  from $\mathcal{G}(\Popt[m]) \geq \mathcal{G}(\Pop_k)$, we 
  obtain that
  $\mathcal{G}(\Pop_{k+1}) \leq \mathcal{G}(\Popt[m+\alpha])$.  This
  proves the claim in the case $N \geq 1$.
\end{proof}

\begin{theorem}
  \label{thm:energy_opt}
  Consider the sequences $(\Pop_k)_{k \in \mathbb{N}_0}$ and
  $(\mathcal{M}_k)_{k \in \mathbb{N}_0}$ produced by
  Algorithm~\ref{algo:AFEM} (second formulation).
  Then, there exists a constant $1\le C \newer{\Cleq 1/\mu^2}$, 
  such 
  that $\#(\Pop_k \setminus \Pop_\bot) \geq C m$ implies
  $\mathcal{G}(\Pop_k) \leq \Gopt[m]$, i.e., 
    the algorithm is {\em energy optimal} with respect to the total 
  energy~$\mathcal{G}$. 
\end{theorem}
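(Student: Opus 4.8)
The plan is to argue by contraposition: assuming $\mathcal{G}(\Pop_k) > \Gopt[m]$, I will show that $\#(\Pop_k \setminus \Pop_\bot) \Cleq m/\mu^2$ with the hidden constant depending only on $\tria_\bot$; taking $C$ to be a sufficiently large multiple of $1/\mu^2$ (which is automatically $\geq 1$, since $\mu \leq 1$) then gives the assertion. The key preliminary is that each index $0 \leq i \leq k$ carries a well-defined \emph{level}: since $\mathcal{G}$ is non-increasing along $(\Pop_i)_i$ and $\mathcal{G}(\Pop_i) \geq \mathcal{G}(\Pop_k) > \Gopt[m]$, there is a unique $m_i \in \{0,\dots,m-1\}$ with $\Gopt[m_i] \geq \mathcal{G}(\Pop_i) > \Gopt[m_i+1]$, and one checks that $i \mapsto m_i$ is non-decreasing, that $m_k \leq m-1$, and that every attained level $\ell$ satisfies $\Gopt[\ell] > \Gopt[\ell+1]$.

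Fix $K := 2\cGD/\mu$ as in Lemma~\ref{lem:optimal_multi_level} and $\gamma \Cgeq \mu$ as in Lemma~\ref{lem:optimal_single_level}, and split the iterations $\{0,\dots,k-1\}$ into the \emph{large} ones $N_{\mathrm b} := \{i : \#\mathcal{M}_i \geq K\}$ and the remaining \emph{small} ones $N_{\mathrm s}$. The first step is to bound the number of iterations $k$. Since a large marking strictly raises the level --- by Lemma~\ref{lem:optimal_multi_level}, $\#\mathcal{M}_i \geq K$ gives $m_{i+1} \geq m_i + \lfloor \#\mathcal{M}_i/K \rfloor \geq m_i+1$ --- for each level $\ell$ the indices with $m_i = \ell$ form an interval $\{a_\ell,\dots,b_\ell\}$ all of whose members except possibly $b_\ell$ lie in $N_{\mathrm s}$. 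Applying Lemma~\ref{lem:optimal_single_level} on $i = a_\ell,\dots,b_\ell-1$, together with $\#\mathcal{M}_i \geq 1$ (Proposition~\ref{prop:lwb}), gives $\mathcal{G}(\Pop_i) - \mathcal{G}(\Pop_{i+1}) \geq \gamma(\Gopt[\ell] - \Gopt[\ell+1])$; telescoping and using $\mathcal{G}(\Pop_{a_\ell}) \leq \Gopt[\ell]$, $\mathcal{G}(\Pop_{b_\ell}) > \Gopt[\ell+1]$, and $\Gopt[\ell] > \Gopt[\ell+1]$ forces $b_\ell - a_\ell \leq 1/\gamma$. Summing over the at most $m$ attained levels yields $k \leq m(1+1/\gamma) \Cleq m/\mu$.

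The second step bounds $\sum_{i=0}^{k-1}\#\mathcal{M}_i$. On $N_{\mathrm s}$, the bound $\#\mathcal{M}_i < K$ combined with the previous step gives $\sum_{i \in N_{\mathrm s}}\#\mathcal{M}_i < Kk \Cleq (1/\mu)(m/\mu) = m/\mu^2$. On $N_{\mathrm b}$, using $\lfloor x \rfloor \geq x/2$ for $x \geq 1$ in Lemma~\ref{lem:optimal_multi_level} gives $\#\mathcal{M}_i \leq 2K(m_{i+1}-m_i)$, hence $\sum_{i \in N_{\mathrm b}}\#\mathcal{M}_i \leq 2K\sum_{i=0}^{k-1}(m_{i+1}-m_i) \leq 2Km_k \leq 2Km \Cleq m/\mu$. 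Adding the two contributions, $\sum_{i=0}^{k-1}\#\mathcal{M}_i \Cleq m/\mu^2$, and then Corollary~\ref{cor:BDD} together with \eqref{eq:no_P_vs_T} gives $\#(\Pop_k \setminus \Pop_\bot) \leq \#(\tria_k \setminus \tria_\bot) \Cleq \sum_{i=0}^{k-1}\#\mathcal{M}_i \Cleq m/\mu^2$, as required.

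The step I expect to be the main obstacle is getting this bookkeeping right. The per-iteration estimates come from two complementary lemmas that must be combined along the correct partition of the iterations: Lemma~\ref{lem:optimal_multi_level} by itself controls only the ``few but large'' markings, each of which advances the optimal-energy level proportionally to its size, and it leaves open the possibility of arbitrarily many small-marking steps; it is Lemma~\ref{lem:optimal_single_level} that caps the number of small-marking steps per energy level by $O(1/\gamma)$. Recognising that this per-level cap is precisely what converts the per-level energy decrease into a global bound on the number of iterations --- and hence on the total number of marked edges --- is the crux; the surrounding manipulations are elementary.
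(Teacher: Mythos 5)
Your argument is correct, and it rests on exactly the same two ingredients as the paper's proof --- Lemma~\ref{lem:optimal_single_level} to cap the number of iterations per optimal-energy level by $O(1/\gamma)$, and Lemma~\ref{lem:optimal_multi_level} to show that a large marking advances that level in proportion to its size --- but the bookkeeping is organized quite differently. The paper argues forwards by induction on $k$, maintaining the invariant $\mathcal{G}(\Pop_k)\le\Gopt[\lfloor C_k/L\rfloor]$ with $C_k=\sum_{j<k}\#\mathcal{M}_j$ and a tuned constant $L=2(R-1)(K-1)+2K$, $R=\lceil 1/\gamma\rceil$; the induction step needs a case distinction on whether a marking of size $\ge K$ occurred within the last $R$ iterations, together with some floor-function gymnastics. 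You instead argue by contraposition and count globally: assuming $\mathcal{G}(\Pop_k)>\Gopt[m]$, you assign to each iteration $i$ its well-defined level $m_i$, use the per-level cap from Lemma~\ref{lem:optimal_single_level} (together with $\#\mathcal{M}_i\ge 1$ from Proposition~\ref{prop:lwb} and $\Gopt[\ell]>\Gopt[\ell+1]$ for attained levels) to get $k\Cleq m/\mu$, and then bound $\sum_i\#\mathcal{M}_i$ by splitting into small markings (each $<K\Cleq 1/\mu$, at most $k$ of them) and large markings (each charged via $\#\mathcal{M}_i\le 2K(m_{i+1}-m_i)$ to level increments that telescope to at most $m$), before invoking Corollary~\ref{cor:BDD} and \eqref{eq:no_P_vs_T}. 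Your version dispenses with the induction and the constant $L$ altogether, at the modest cost of verifying that $i\mapsto m_i$ is well defined and non-decreasing and that a marking with $\#\mathcal{M}_i\ge K$ strictly increases it --- all of which you do correctly. Both routes give the same dependence $C\Cleq 1/\mu^2$ through $\gamma\Cgeq\mu$ and $K\Cleq 1/\mu$.
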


\begin{proof}
  Let $\gamma$ and $K$ be the constants
  from Lemmas~\ref{lem:optimal_single_level} and
  \ref{lem:optimal_multi_level}. Setting
  $$
  C_\ell:=\sum_{j=0}^{\ell-1} \#\Mk[j], \quad R:= \Big\lceil
  \frac{1}{\gamma}\Big\rceil, \quad L:=2(R-1)(K-1)+2K,
  $$
  the claim follows from
  \begin{equation} \label{eq:opt17}
    \mathcal{G}(\Pop_k) \leq \mathcal{G}(\Popt[\lfloor C_k/L\rfloor]).
  \end{equation}
  Indeed, Corollary~\ref{cor:BDD} and \eqref{eq:no_P_vs_T} imply
  that there exists some constant $D>0$, depending solely on
  $\tria_\bot$, such that
  $\#(\Pop_k\setminus \Pop_\bot) \leq D\, C_k $.  Taking $C:=2DL$, we
  conclude that if $\#(\Pop_k\setminus \Pop_\bot) \geq C m$, then
  $\frac{C_k}{2L} \geq m$, and thus $\lfloor C_k / L \rfloor \geq m$,
  which shows $\mathcal{G}(\Pop_k) \leq \mathcal{G}(\Popt[m])$ by
  \eqref{eq:opt17}.  \newer{Note that we have $C\Cleq 1/\mu^2$, thanks
    to the dependence of the constants $\gamma$
    and $K$ on the marking parameter $\mu$; compare with  Lemmas~\ref{lem:optimal_single_level} and~\ref{lem:optimal_multi_level}.} 
  
  We shall prove \eqref{eq:opt17} by
  induction. Obviously, \eqref{eq:opt17}  is valid for $k=0$. Fixing an arbitrary $k \in
  \mathbb{N}$, assume that 
  \eqref{eq:opt17} is valid for $\mathbb{N}_0 \ni
  k' <k$.  Since
  \eqref{eq:opt17} is obviously true when
  $\mathcal{G}(\Pop_k)=\mathcal{G}(\Pop^\top)$, we assume that
  $\mathcal{G}(\Pop_k)>\mathcal{G}(\Pop^\top)$.
  
  First, assume that the set 
  \begin{align}
    \label{eq:opt19}\big\{\ell' \in \set{\max(k-R,0)\new{,\max(k-R+1,0)},\ldots,k-1}: \#\Mk[\ell'] \geq K\big\} 
  \end{align}
  is non-empty and set $\ell$ to be its maximal element.
  \new{By the induction hypothesis, there exists an $m \geq \lfloor C_\ell / L \rfloor$ such that
  $\mathcal{G}(\Popt[m+1]) <\mathcal{G}(\Pop_\ell) \leq
  \mathcal{G}(\Popt[m] )$.  By
  Lemma~\ref{lem:optimal_multi_level} we obtain that} $\mathcal{G}(\Pop_{k})
  \leq \mathcal{G}(\Pop_{\ell+1}) \leq \mathcal{G}(\Popt[m+\lfloor  \#
  {\Mk[\ell]}/K\rfloor])$.  Using that $\lfloor a \rfloor + \lfloor b
  \rfloor \geq \lfloor a +b/2 \rfloor$ for $b \geq 1$, $\#
  \Mk[\ell]\geq K$, the definition of $L$, and $\# \Mk[\ell'] \leq K-1$
  for the at most $R-1$ integers $\ell < \ell' \leq k-1$, we arrive at
  \begin{align*}
    m+\lfloor \# \Mk[\ell]/K\rfloor  &\geq \lfloor C_\ell / L \rfloor+
    \lfloor  \# {\Mk[\ell]}/K\rfloor \geq 
    \lfloor C_\ell / L + \# {\Mk[\ell]}/(2K)\rfloor
    \\
    &\geq \lfloor (C_\ell+ \#{\Mk[\ell]} +(R-1)(K-1)) / L \rfloor
    \geq \lfloor C_{k} / L \rfloor.
  \end{align*}
  This completes the proof of \eqref{eq:opt17} in the case that the
  set in \eqref{eq:opt19} is non-empty.
  
  Suppose now, that the set in \eqref{eq:opt19} is empty.  If
  $k<R$, then $C_{k} \leq (R-1)(K-1)<L$, and we have
  \eqref{eq:opt17}.  Now let $k \geq R$.
  \new{By the induction hypothesis, there exists an $m \geq \lfloor
  C_{k-R} / L \rfloor$} such that $\mathcal{G}(\Popt[m+1])
  <\mathcal{G}(\Pop_{k-R}) \leq \mathcal{G}(\Popt[m] )$.  By a
  repeated application of Lemma~\ref{lem:optimal_single_level} with
  $k$ reading as $k-R, k-R+1,\ldots,\ell$, as long as
  $\mathcal{G}(\Pop_\ell) >\mathcal{G}(\Popt[m+1])$, we find that
  \begin{align*}
    \mathcal{G}(\Pop_{k-R})-
    \mathcal{G}(\Pop_{\ell+1})&=\mathcal{G}(\Pop_{k-R})-
    \mathcal{G}(\Pop_{k-R+1})+ \cdots +\mathcal{G}(\Pop_{\ell})-
    \mathcal{G}(\Pop_{\ell+1})
    \\
    &\geq \gamma (\ell-k+R+1)
    (\mathcal{G}(\Popt[m])-\mathcal{G}(\Popt[m+1]))
    \\
    & \geq \gamma (\ell-k+R+1)
    (\mathcal{G}(\Pop_{k-R})-\mathcal{G}(\Popt[m+1]).
  \end{align*}
  Therefore, by definition of $R$, for $\ell=k-1$ at the latest it
  holds that $\mathcal{G}(\Pop_{\ell+1}) \leq
  \mathcal{G}(\Popt[m+1])$, and thus $\mathcal{G}(\Pop_{k}) \leq
  \mathcal{G}(\Popt[m+1])$.  Since $L \geq R(K-1)$ and
  $\#\Mk[\ell'] \leq K-1$ for $\ell' \in \set{k-R,\ldots,k-1}$, we
  have
  \begin{align*}
    m+1 \geq \lfloor 1+ C_{k-R} / L \rfloor \geq \lfloor (R(K-1)+
    C_{k-R}) / L \rfloor \geq \lfloor C_{k} / L \rfloor.
  \end{align*}
  This proves \eqref{eq:opt17}.
\end{proof}

We are now ready to prove {\em instance optimality} of our AFEM as
was announced in the introduction: 

\begin{theorem}
  \label{thm:instance}
  There  exist constants
  \newer{$1\le C\Cleq 1/\mu^2$} and $\tilde C \new{\geq 1}$, such that for $(\tria_k)_{k \in 
    \mathbb{N}_0} \subset \bbT$ being the sequence of triangulations
  produced by Algorithm~\ref{algo:AFEM} (first formulation), it holds that 
  \begin{align*}
    \abs{u-\uT[\tria_k]}_{H^1(\Omega)}^2+
    \osc_{\tria_k}^2(\new{\tria_k}) \le \tilde C\, \big(
    \abs{u-\uT[\tria]}_{H^1(\Omega)}^2+\osc_\tria^2(\new{\tria})
    \big)
  \end{align*}
  for all $\tria\in\bbT$ with \new{$\#(\tria \setminus \tria_\bot)\le \frac{\#(\tria_k \setminus \tria_\bot)}{C}$}. 
  The constant $\tilde{C}$ \new{depends only possibly} on $\tria_\bot$. 
\end{theorem}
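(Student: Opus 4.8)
The plan is to derive this from the energy optimality of the algorithm, Theorem~\ref{thm:energy_opt}, combined with the fact that the residual estimator, the total error, and the total energy gap $\mathcal{G}(\cdot)-\mathcal{G}(\tria^\top)$ are all equivalent, as recorded in \eqref{eq:energy_total_error} and \eqref{eq:energy+est+osc}. Let $C_0 \Cleq 1/\mu^2$ denote the constant furnished by Theorem~\ref{thm:energy_opt} and set $C := 2C_0 \geq 1$, so that $C \Cleq 1/\mu^2$. Fix an admissible $\tria \in \bbT$ with $\#(\tria \setminus \tria_\bot) \leq \#(\tria_k \setminus \tria_\bot)/C$, and put $m := \#(\Pop(\tria) \setminus \Pop_\bot)$. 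Applying \eqref{eq:no_P_vs_T} with the bottom population gives $m \leq \#(\tria \setminus \tria_\bot)$ and $\#(\tria_k \setminus \tria_\bot) \leq 2\,\#(\Pop_k \setminus \Pop_\bot)$, hence
\begin{align*}
  \#(\Pop_k \setminus \Pop_\bot)\ \geq\ \tfrac12\,\#(\tria_k \setminus \tria_\bot)\ \geq\ \tfrac{C}{2}\,\#(\tria \setminus \tria_\bot)\ \geq\ C_0\,m .
\end{align*}
By Theorem~\ref{thm:energy_opt} this yields $\mathcal{G}(\Pop_k) \leq \Gopt[m]$. Since $\Pop(\tria)$ is a competitor in the minimisation defining $\Gopt[m]$, we have $\Gopt[m] \leq \mathcal{G}(\Pop(\tria)) = \mathcal{G}(\tria)$, and as $\tria(\Pop_k) = \tria_k$ we conclude $\mathcal{G}(\tria_k) \leq \mathcal{G}(\tria)$.

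Subtracting $\mathcal{G}(\tria^\top)$ on both sides and chaining the equivalences \eqref{eq:energy_total_error} and \eqref{eq:energy+est+osc}, I would then obtain
\begin{align*}
  \abs{u-\uT[\tria_k]}_{H^1(\Omega)}^2+\osc^2(\tria_k)
  &\eqsim \estT[\tria_k]^2(\sides(\tria_k))
  \eqsim \mathcal{G}(\tria_k)-\mathcal{G}(\tria^\top)\\
  &\leq \mathcal{G}(\tria)-\mathcal{G}(\tria^\top)
  \eqsim \estT[\tria]^2(\sides(\tria))
  \eqsim \abs{u-\uT[\tria]}_{H^1(\Omega)}^2+\osc^2(\tria),
\end{align*}
where all the hidden constants depend solely on $\tria_\bot$ (and not on $\mu$). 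Taking $\tilde C$ to be the product of these constants, enlarged to be at least $1$, finishes the proof.

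The only point requiring a bit of care — rather than a genuine obstacle — is the bookkeeping between counting persons and counting triangles: the assertion is phrased in terms of $\#(\cdot \setminus \tria_\bot)$ whereas Theorem~\ref{thm:energy_opt} is phrased in terms of $\#(\cdot \setminus \Pop_\bot)$, so one has to pass through the two-sided comparison \eqref{eq:no_P_vs_T}, which in turn forces the factor $2$ in $C := 2C_0$. Correspondingly, the right quantity to feed into $\Gopt[\cdot]$ is $m = \#(\Pop(\tria) \setminus \Pop_\bot)$, since it is precisely $\Pop(\tria)$ (not the triangulation's cardinality) that must qualify as an admissible competitor in the definition of $\Gopt[m]$. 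Everything else is a direct chaining of the previously established equivalences.
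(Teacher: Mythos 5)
Your proposal is correct and follows the same route as the paper: the paper's own proof simply invokes the equivalence \eqref{eq:energy_total_error}--\eqref{eq:energy+est+osc} between the total error and the total energy gap $\mathcal{G}(\cdot)-\mathcal{G}(\tria^\top)$ and then appeals to Theorem~\ref{thm:energy_opt}. Your additional bookkeeping via \eqref{eq:no_P_vs_T} to pass between $\#(\cdot\setminus\tria_\bot)$ and $\#(\cdot\setminus\Pop_\bot)$, with the resulting factor $2$ in $C$, is exactly the step the paper leaves implicit, and it is handled correctly.
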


\begin{proof}
  We know from~\eqref{eq:energy_total_error} that the total
  energy~$\mathcal{G}$ satisfies
  \begin{align*}
    \mathcal{G}(\new{\tria})-\mathcal{G}(\new{\tria^\top})\eqsim
    \abs{u-\uT[\tria]}^2_{H^1(\Omega)}+\osc_\tria^2(\new{\tria}) 
  \end{align*}
  for all $\tria\in\bbT$. Hence the assertion follows from 
  Theorem~\ref{thm:energy_opt}.
\end{proof}

\appendix
\new{
\section{A slightly modified marking} \label{appendix}

In this section we propose a routine \textsf{MARK}, resorting to slightly modified
accumulated indicators, that can be implemented in  ${\mathcal O}(\#\Pop_k)$ operations.  
The important fact is that Proposition~\ref{prop:lwb} remains valid, which ensures the
instance optimality in Section~\ref{sec:simpl-optim} also for this modified marking step.

To this end, we first compute a modified maximal accumulated
indicator $\EstPbar[\Pop_k]^2$. 
This value can be determined 
with the help of the following recursive routine. 
\begin{algorithm}[Maximal Indicator]
  Set $\EstPbar[\Pop_k]^2:=0$ and call $\textsf{max-ind}(P,0)$ for all
$P\in(\Pop_k\plus\setminus\Pop_k)$ with no parents in $\Pop_k\plus\setminus
  \Pop_k$.
  {\em \par{ \linespread{1.5} \sffamily \begin{tabbing} \mbox{}\quad\=
      \quad \= \quad \=\kill 
       \>$\text{procedure \sf max-ind}(P,\text{value-parent})$
      \\
      \>\>$\EstPbar[\Pop_k]^2 := \max\{\EstPbar[\Pop_k]^2,\estP[\Pop_k]^2(P) + \text{value-parent}\}$;
      \\
      \>\>for each child $C \in \children(P) \cap(\Pop\plus_k \setminus
      \Pop_k)$ do
      \\
      \>\>\> $\textsf{max-ind}(C,\estP[\Pop_k]^2(P) +
      \text{value-parent})$;
      \\
      \>\>end for
      \\
      \>end procedure \textsf{max-ind}
\end{tabbing}}}
\end{algorithm}
In general we have $\EstPbar[\Pop_k]^2\neq
\estPbar[\Pop_k]^2$, since
$C\in \Pop_k\plus \setminus
\Pop_k$ may have {\em two} parents $P,P' \in \Pop_k\plus\setminus
\Pop_k$.  However, such a $C$ cannot have children in $\Pop\plus
\setminus \Pop$ as is illustrated in Figure~\ref{markfig},
and so we conclude that 
\begin{align*}
  \EstPbar[\Pop_k]^2\ge \frac12\estPbar[\Pop_k]^2.
\end{align*}

\begin{figure}
\input{mark.pspdftex}
\caption{$\tria(\Pop)$ (solid lines), $P,P',Q \in \Pop\plus\setminus
  \Pop=\midpoint{\sides(\tria(\Pop))}$ with $\{P,P'\} = \parents(Q)$,
  and $\{\Box\}=\children(Q).$} 
\label{markfig}
\end{figure}

Next, the sets $\Mk$ and $\tildeMk$ are determined
by running the following routine. 

 \begin{algorithm}[Marking]\label{algo:mark}
   Set $\Mk:=\tildeMk:=\emptyset$ and call $\textsf{accum-est}(P,0)$ for
   all $P \in \Pop_k\plus \setminus\Pop_k$ with no parents in $\Pop_k\plus\setminus\Pop_k$.

{\em \par{ \linespread{1.5}
       \sffamily \begin{tabbing} \mbox{}\quad\= \quad \= \quad \=
         \quad \= \kill%
         \>$\text{boolean function \sf accum-est}(P,\text{value-parent})$
         \\
         \>\>$E_P^2:= \estP[\Pop_k]^2(P) +
         \text{value-parent}$;
         \\
         \>\>
         \\
         \>\> {\tt is\_marked} := false;
         \\
         \>\>if $E_P^2\geq \mu \estPbar[\Pop_k]^2$ then\\
         \>\>\> ${\mathcal M}_k:={\mathcal M}_k
         \cup \{P\}$; $\widetilde{\mathcal M}_k:=\widetilde{\mathcal M}_k
         \cup \{P\}$; \quad
         \\
         \>\>\> $E_P^2:= 0$;
         \\
         \>\>\> {\tt is\_marked} := true;
         \\
         \>\>end if;\\
         \>\>for each child $C \in \children(P) \cap(\Pop\plus_k
         \setminus \Pop_k)$ do
         \\
         \>\>\> if $\text{accum-est}(C, E_P^2)$ then \quad \%
         child is
         marked, so mark the parent\\
         \>\>\> \> $\widetilde{\mathcal M}_k:=\widetilde{\mathcal M}_k \cup \{P\}$;\\
         \>\>\>\> $E_P^2:= 0$;
         \\
         \>\>\>\> {\tt is\_marked} := true;
         \\
         \>\>\> end if\\
         \>\>end for
         \\
         \>\>return {\tt is\_marked};
	 \\
         \>end function accum-est
       \end{tabbing}}}
 \end{algorithm}

 One verifies that $\Mk, \tildeMk \subset \Pop_k\plus\setminus\Pop_k$, $\Pop_k \oplus \Mk = \Pop_k \cup \tildeMk$, and 
 \begin{align*}
   \estP[\Pop_k]^2 \big( (\Pop_k \oplus \Mk)\setminus\Pop_k\big) 
\geq \mu \,\# \Mk\, \EstPbar[\Pop_k]^2\ge \frac12\mu \,\# \Mk\, \estPbar[\Pop_k]^2;
 \end{align*}
i.e., Proposition~\ref{prop:lwb} is still valid.

Finally, the work needed for this evaluation of MARK scales linearly with $\# \Pop_k$.
Indeed, the number of times that a $P \in \Pop_k\plus \setminus
\Pop_k$ is accessed by the flow of computation is proportional to the
number of calls $\text{accum-est}(P,\cdot)$ 
(being one), plus the number of its children in $\Pop_k\plus \setminus
\Pop_k$ (being uniformly bounded). 
}

\providecommand{\href}[2]{#2}



\begin{thebibliography}{CKNS08}

\bibitem[1]{Baensch:91}
E.~B{\"a}nsch, \emph{Local mesh refinement in 2 and 3 dimensions}, IMPACT
  Comput. Sci. Engrg. \textbf{3} (1991), 181--191.

\bibitem[2]{BinDD04}
P.~Binev, W.~Dahmen, and R.~DeVore, \emph{Adaptive finite element methods with
  convergence rates}, Numer. Math. \textbf{97} (2004), no.~2, 219--268.

\bibitem[3]{Binev:07}
P.~Binev, \emph{Adaptive methods and near-best tree approximation}, Oberwolfach
  Reports, vol.~29, 2007, pp.~1669--1673.

\bibitem[4]{BabuskaRheinboldt:78}
I.~Babu{\v{s}}ka and W.~C. Rheinboldt, \emph{Error estimates for adaptive
  finite element computations}, SIAM J. Numer. Anal. \textbf{15} (1978), no.~4,
  736--754.

\bibitem[5]{BabuskaVogelius:84}
I.~Babu{\v{s}}ka and M.~Vogelius, \emph{Feedback and adaptive finite element
  solution of one-dimensional boundary value problems}, Numer. Math.
  \textbf{44} (1984), no.~1, 75--102.

\bibitem[6]{CaKrNoSi:08}
J.~M. Casc\'on, C.~Kreuzer, R.~H. Nochetto, and K.~G. Siebert,
  \emph{Quasi-optimal convergence rate for an adaptive finite element method},
  SIAM J. Numer. Anal. \textbf{46} (2008), no.~5, 2524--2550.

\bibitem[7]{Doerfler:96}
W.~D{\"o}rfler, \emph{A convergent adaptive algorithm for {P}oisson's
  equation}, SIAM J. Numer. Anal. \textbf{33} (1996), 1106--1124.

\bibitem[8]{Kossaczky:94}
I.~Kossaczk\'y, \emph{A recursive approach to local mesh refinement in two and
  three dimensions}, J. Comput. Appl. Math. \textbf{55} (1994), 275--288.

\bibitem[9]{Maubach:95}
J.~M. Maubach, \emph{Local bisection refinement for n-simplicial grids
  generated by reflection}, SIAM J. Sci. Comput. \textbf{16} (1995), 210--227.

\bibitem[10]{MoNoSi:00}
P.~Morin, R.~H. Nochetto, and K.~G. Siebert, \emph{Data oscillation and
  convergence of adaptive {FEM}}, SIAM J. Numer. Anal. \textbf{38} (2000),
  466--488.

\bibitem[11]{MoNoSi:02}
\bysame, \emph{Convergence of adaptive finite element methods}, SIAM Review
  \textbf{44} (2002), 631--658.

\bibitem[12]{MorinSiebertVeeser:08}
P.~Morin, K.~G. Siebert, and A.~Veeser, \emph{A basic convergence result for
  conforming adaptive finite elements}, Math. Models Methods Appl. Sci.
  \textbf{18} (2008), no.~5, 707--737.

\bibitem[13]{NoSiVe:09}
R.~H. Nochetto, K.~G. Siebert, and A.~Veeser, \emph{Theory of adaptive finite
  element methods: An introduction}, Multiscale, Nonlinear and Adaptive
  Approximation (Ronald~A. DeVore and Angela Kunoth, eds.), Springer, 2009,
  pp.~409--542.

\bibitem[14]{SchmidtSiebert:05}
A.~Schmidt and K.~G. Siebert, \emph{Design of adaptive finite element software.
  {T}he finite element toolbox \textsf{ALBERTA}}, {Lecture Notes in
  Computational Science and Engineering 42. Springer}, 2005 (English).

\bibitem[15]{Ste07}
R.P.~Stevenson, \emph{Optimality of a standard adaptive finite element method},
  Found. Comput. Math. \textbf{7} (2007), no.~2, 245--269.

\bibitem[16]{Ste08}
\bysame, \emph{The completion of locally refined simplicial partitions created
  by bisection}, Math. Comp. \textbf{77} (2008), no.~261, 227--241
  (electronic).

\bibitem[17]{ScoZha90}
L.~R. Scott and S.~Zhang, \emph{Finite element interpolation of nonsmooth
  functions satisfying boundary conditions}, Math. Comp. \textbf{54} (1990),
  no.~190, 483--493.

\bibitem[18]{Traxler:97}
C.~T. Traxler, \emph{An algorithm for adaptive mesh refinement in $n$
  dimensions}, Computing \textbf{59} (1997), 115--137.

\bibitem[19]{Verfuerth:96}
R.~Verf\"urth, \emph{A review of a posteriori error estimation and adaptive
  mesh-refinement techniques}, Adv. Numer. Math., John Wiley, Chichester, UK,
  1996.

\end{thebibliography}

\end{document}